\documentclass[11pt]{article}
\usepackage[T1]{fontenc}
\usepackage[a4paper,margin=1in]{geometry}
\usepackage{amsmath,amssymb,amsthm,mathtools}
\usepackage{enumitem}
\usepackage{hyperref}

\newtheorem{theorem}{Theorem}
\newtheorem{lemma}{Lemma}
\newtheorem{corollary}{Corollary}
\newtheorem{remark}{Remark}

\newcommand{\R}{\mathbb R}
\newcommand{\N}{\mathbb N}
\newcommand{\supp}{\operatorname{supp}}
\newcommand{\Lie}{\mathcal L}

\usepackage{lineno}
\usepackage{amsmath}

\makeatletter
\newcommand*\patchAmsMathEnvironmentForLineno[1]{%
	\expandafter\let\csname old#1\expandafter\endcsname\csname #1\endcsname
	\expandafter\let\csname oldend#1\expandafter\endcsname\csname end#1\endcsname
	\renewenvironment{#1}%
	{\linenomath\csname old#1\endcsname}%
	{\csname oldend#1\endcsname\endlinenomath}%
}
\patchAmsMathEnvironmentForLineno{equation}
\patchAmsMathEnvironmentForLineno{align}
\makeatother

\newif\iflncs\lncsfalse
\newif\ifmai\maifalse
\newif\ifrunningex\runningexfalse

\usepackage{rotating}
\usepackage[english]{babel}
\usepackage{bbm}
\usepackage{amsthm}
\usepackage[T1]{fontenc}
\usepackage{algorithm}
\usepackage{svg}
\usepackage{algpseudocode}
\usepackage{listings}
\usepackage{ifthen}
\usepackage{geometry}

\usepackage{chngcntr}
\usepackage{apptools}
\usepackage{xcolor}

\usepackage{mathtools}
\usepackage{booktabs}
\usepackage{multirow}
\usepackage{amsthm}
\usepackage{geometry}

\ifmai
\theoremstyle{plain}
\renewenvironment{proof}{\noindent{\sc Proof.}~}{ \hfill $\Box$ \vskip 2mm }
\newenvironment{proof_of}[1]{\noindent{\sc Proof of #1.}~}{ \hfill $\Box$ \vskip .2mm}
\iflncs{}\else \fi

\iflncs{}\else \fi

\iflncs{}\else \newtheorem{theorem}{Theorem}\fi

\iflncs{}\else \newtheorem{corollary}{Corollary} \fi

\iflncs{}\else \newtheorem{lemma}{Lemma}\fi

\newtheorem{remark}{Remark}[section]
\fi
\renewenvironment{proof}{\noindent{\sc Proof.}~}{ \hfill $\Box$ \vskip 2mm }
\newenvironment{proof_of}[1]{\noindent{\sc Proof of #1.}~}{ \hfill $\Box$ \vskip .2mm}

\usepackage{amssymb}
\usepackage{graphicx}
\usepackage{graphics}
\usepackage{pgf}
\usepackage{tikz}
\usepackage{wrapfig}

\newcommand{\mik}[1]{\marginpar{ \textbf{Mic:} {\footnotesize #1}}}
\newcommand{\mic}[1]{}

\newcommand{\es}{\emptyset}

\newcommand{\reals}{{\mathbb{R}}}

\newcommand{\ode}{{\sc ode}}

\newcommand{\lie}{\mathcal{L}}
\newcommand{\zz}{z}

\newbox\mybox
\newdimen\myboxwidth

\newcommand{\rt}{{\sc ckr}}

\newcommand{\lui}[1]{}
\usepackage{physics}
\usepackage{tikz}
\usepackage{mathdots}
\usepackage{cancel}
\usepackage{color}
\usepackage{array}
\usepackage{multirow}
\usepackage{amssymb}
\usepackage{gensymb}
\usepackage{tabularx}
\usepackage{extarrows}
\usepackage{booktabs}
\usetikzlibrary{fadings}
\usetikzlibrary{patterns}
\usetikzlibrary{shadows.blur}
\usetikzlibrary{shapes}
\usepackage{hhline}

\def\BibTeX{{\rm B\kern-.05em{\sc i\kern-.025em b}\kern-.08em
	T\kern-.1667em\lower.7ex\hbox{E}\kern-.125emX}}

\usepackage{chngcntr}
\usepackage{apptools}

\usepackage{listings}
\usepackage[numbers]{natbib}

\begin{document}

\title{New convergence results for Carleman linearization}
\author{Michele Boreale and Luisa Collodi\\
Universit\`a    di Firenze}

\date{}



\maketitle

\begin{abstract}
We prove new error bounds for finite Carleman truncations of polynomial
ordinary differential equations. The analysis works directly in the original
monomial basis and for selected observables, such as state coordinates. Using a
Dyson--Duhamel expansion, we separate the degree-preserving linear part from the
degree-raising nonlinear part and track how truncation errors can propagate back
to the observable. The resulting bounds are degree-aware and retain
logarithmic-norm information from the original linear dynamics. We obtain
explicit finite-degree estimates and geometric convergence over certified time
horizons. Comparisons with existing bounds, in particular those of
Forets--Pouly, are given on the Stuart--Landau and Van der Pol systems.
\end{abstract}

\noindent
\textbf{Keywords}:	Nonlinear ODEs,    Carleman linearization, reachability, quantum algorithms.

\ifmai
\section{Introduction}
Reachability analysis is a central problem in the formal verification of continuous
and hybrid systems. Given a dynamical system and a set of initial states, the goal is
to compute a sound enclosure of all states that can be reached over a prescribed time
horizon. This problem underlies safety verification for cyber-physical systems and has
motivated a large body of tools and methods, including SpaceEx~\cite{Frehse2011},
Flow*~\cite{Chen2013}, CORA~\cite{Althoff2015}, and JuliaReach~\cite{Bogomolov2019}.
For nonlinear ordinary differential equations, most practical algorithms rely on local
approximations: Taylor models, local linearization, zonotopes, support functions, or
combinations thereof: see  the
survey~\cite{AlthoffFrehseGirard2021} and references thereof. For nonlinear ordinary differential
equations, however, reachability remains difficult. Most successful methods build
flowpipes in a piecewise way, by repeatedly constructing local approximations in
time or in space. This locality is often unavoidable, but it can also lead to
accumulated over-approximation and frequent recomputation.

A classical way to simplify nonlinear dynamics is to replace them by linear
dynamics. The standard first-order linearization does this around a fixed point,
and is therefore intrinsically local. Carleman linearization offers a different
possibility. Instead of linearizing the vector field around a point, it lifts the
system to a space of monomials, where the evolution becomes linear but
infinite-dimensional. A finite truncation of this lifted system then provides a
linear approximation of the original nonlinear dynamics. This construction is
particularly attractive for formal methods: if the nonlinear system can be
approximated by a finite linear system with a certified error bound, then one may
combine nonlinear verification with mature linear reachability techniques.

This is the motivation behind our previous work on Carleman--Krylov Reachability
(CKR)~\cite{BorealeCollodi2025}. In CKR, Carleman linearization is combined with
Krylov projection to obtain a small linear system, independent of the particular
initial state, that can be used to propagate reachsets represented as zonotopes.
The approximation error is then compensated by inflating the propagated sets. In
that setting, the quality of the Carleman truncation bound is not a merely
analytic issue: it directly affects the size of the bloating terms and therefore
the precision of the computed flowpipe.

Explicit error bounds for Carleman truncations have been studied before. Forets
and Pouly~\cite{ForetsPouly2017} gave computable bounds for polynomial systems,
including estimates based on a priori trajectory bounds and estimates obtained
after reducing polynomial systems to quadratic form. Amini, Zheng, Sun and
Motee~\cite{AminiZhengSunMotee2025} developed a broader finite-section theory
for analytic systems, covering also non-polynomial vector fields under suitable
coefficient-decay assumptions. These works provide important convergence
guarantees, but their bounds can be conservative when used as numerical
certificates for polynomial systems arising in reachability. In particular,
coarse norm estimates and lifted representations may hide useful structure of the
original system.

The present paper revisits the error analysis of truncated Carleman
linearization from a viewpoint closer to reachability. We focus on polynomial
systems and on observables of interest, such as the individual state coordinates
that define the reachable set. Rather than passing immediately to a quadratic
lift, we work directly with monomials in the original variables. This allows the
analysis to keep track of how errors generated in high-degree monomials can
propagate back to a low-degree observable. The key observation is simple: the
linear part of the vector field preserves monomial degree, whereas the nonlinear
part can only increase it. Hence a truncation error lying beyond degree \(D\)
cannot affect a coordinate observable unless enough nonlinear degree-raising
steps have occurred.

Technically, this idea is implemented through a Dyson--Duhamel expansion of the
matrix exponential associated with the truncated Carleman system. This expansion
separates the contribution of the linear part from the nonlinear degree-raising
part. It also makes it possible to count the number of nonlinear interactions
needed to connect the discarded tail with the observable. The resulting estimate
is degree-aware: instead of bounding every nonlinear interaction by the worst
degree \(D\), it follows the degrees that can actually be reached along each
path. This gives sharper finite-dimensional bounds.

A second feature of the proof is the use of logarithmic norms. When the linear
part of the system is contractive, this contractivity should improve the error
bound. Some previous norm-based estimates do not fully exploit this information.
Our analysis keeps the linear and nonlinear effects separate, so that a negative
logarithmic norm of the linear dynamics enters the bound quantitatively. This is
especially relevant in stable or weakly nonlinear regimes, which are common in
verification benchmarks.

The paper compares the resulting bounds with those of Forets--Pouly and
Amini--Zheng--Sun--Motee on the Stuart--Landau and Van der Pol systems. These
examples show both the strengths and the limitations of the approach. The point is
not that quadratic lifting is inherently impractical: in an efficient
implementation, many redundant monomials can be identified. Rather, the advantage
of the present analysis is conceptual and structural. It works in the original
variables, is tailored to the observable being verified, uses degree information
explicitly, and preserves logarithmic-norm information from the original system.
These features make the bounds well suited for integration into
Carleman-based reachability algorithms such as CKR.

\paragraph*{Related  Work}
There exists a vast literature  on the linearization of nonlinear systems. In particular,  techniques based on Carleman
embedding \cite{Bell,Kowa}  have recently received a renewed attention. Most related to our work and  motivations, Jungers and Tabuada \cite{JT19} have recently proposed a technique for global approximation of nonlinear \ode s by linear \ode s, based on \emph{polyflows}. These are systems that are exactly linearizable  via a change of variables.   The technique in \cite{JT19}  is based on building polyflows that approximate the original system, using as a basis the Lie derivatives up to some order $N$ and as $N\rightarrow +\infty$  the approximation of \cite{JT19} becomes exact. Note that this is an asymptotic result that does not easily yield concrete bounds for a fixed $N$.
Systems that are exactly linearizable via  polynomial changes of variables are the subject of \cite{SankaLin1,SankaLin2,Bor18Full};  see also \cite{B1,SOFSEM18,B3}.

In \cite{Bor18Full}    we have considered
Carleman embedding and Krylov-based   approximations,     essentially from a   local point of view. Here, we provide   novel   analyses of both local and   global errors, and leverage them in \rt, a new reachability algorithm.
General error bounds for the truncated Carleman linearization have been recently   considered in \cite{Forets2,Amini}. The time interval of validity of these bounds is quite small, and they appear to be  in practice more conservative than our; cf. experiments and comparison in Section \ref{sec:exp}.
In \cite{Forets1},  efficient reachability for weakly nonlinear, dissipative systems relying on Carleman linearization is presented.  We do not have such restrictions, but it is well possible that the approach in \cite{Forets1} is more effective than ours limited to that class of systems. In the conference version \cite{BC22} an earlier, less stable version of CKR, based on polytopes,  is considered.

In the field of  reachability  for continuous and hybrid systems, state-of-the-art tools like Flow$*$ \cite{Flow} and {\sc cora} \cite{CORA} employ a mix of    approximations techniques  \cite{TaylorModels,TaylorFlow,Flow,CORA,Althoff,ReachReview}.
In particular, Flow$*$ \cite{TaylorFlow,Flow} is based on Taylor models, while  {\sc cora} mainly relies on    linearization of the  \ode\ equations \cite{Althoff,ReachReview}.
These tools are quite effective at building piecewise overapproximations of reachsets, as explained above.
Very recently, Koopman and Carleman approximations have also been applied to reachability analysis   in \cite{Bak22}. In order to achieve greater accuracy, they employ randomly generated Fourier features as observables. Dimension reduction and on-the-fly generation of the model are    not considered. However, they consider discretized versions of the models, and focus on safety tasks, which makes a direct comparison with our experimental results   not possible.


\fi

\section{Introduction}\label{sec:intro}

Reachability analysis is a central problem in the formal verification of
continuous and hybrid systems. Given a dynamical system and a set of initial
states, the goal is to compute a sound enclosure of all states that may be
visited over a prescribed time horizon. This problem is fundamental in the
verification of cyber-physical and safety-critical systems, and has motivated a
large body of methods and tools, including SpaceEx~\cite{Frehse2011},
Flow$*$~\cite{Flow}, CORA~\cite{CORA}, JuliaReach~\cite{Bogomolov2019}, and
zonotope-based approaches~\cite{zono}; see also the survey
\cite{ReachReview}. For nonlinear ordinary differential equations, the problem
remains particularly challenging. Most practical algorithms construct flowpipes
piecewise, relying on Taylor models, local linearization, conservative
polynomialization, set propagation, or combinations of these techniques
\cite{TaylorModels,TaylorFlow,Flow,Althoff,CORA,ReachReview}. The resulting   approximation is \emph{local},
and must be updated
along the reachability steps, which may be computationally costly  and   contribute to error accumulation.

A classical way to simplify nonlinear dynamics is to replace them by linear
dynamics. The standard first-order linearization does so around a fixed point or
around the current reachset, and is therefore local in nature.
\emph{Carleman
linearization}~\cite{Carleman1932,Bell,Kowa} offers a different possibility.
Starting from a polynomial ordinary differential equation, it lifts the system to
a space of monomials, in which the dynamics become linear but
infinite-dimensional. Truncating the lifted system at a finite degree then gives
a finite-dimensional linear approximation of the original nonlinear system. This
is attractive from the point of view of formal verification: linear reachability
techniques are well developed, and a fixed autonomous Carleman truncation can be
constructed once for a chosen degree and then reused across reachability steps.
What remains step-dependent is not the truncated Carleman matrix itself, but the
lifted reachset and the local truncation-error enclosure.
This observation is one of the motivations behind our previous work on
Carleman--Krylov Reachability (CKR)~\cite{full,BC22}, a zonotope-based reachability
algorithm. In CKR, Carleman linearization is combined with Krylov
projection~\cite{Saad} to obtain a small linear system that approximates the
observable dynamics of the original nonlinear system.

A related motivation comes from quantum algorithms for differential equations.
Quantum linear-system and linear-ODE algorithms~\cite{HarrowHassidimLloyd2009,
BerryChildsOstranderWang2017} provide a route to solving certain large linear
problems in a quantum state representation. Carleman linearization has therefore
become a natural preprocessing step for quantum algorithms for nonlinear
dynamics: one first embeds the nonlinear ODE into a higher-dimensional linear
system, truncates it, and then applies a quantum linear-algebraic subroutine.
This idea underlies the quantum algorithm of Liu et al.~\cite{LiuEtAl2021} for
dissipative nonlinear differential equations, and has since been refined and
extended in several directions~\cite{Krovi2023,CostaSchleichMoralesBerry2025,
WuWangLi2025}. A common feature of much of this line of work is that efficiency
and convergence are obtained under some stability, dissipativity, or
contractivity hypotheses.
Recent work
has begun to relax these requirements~\cite{WuWangLi2025},
but still e.g. requiring nonpositive real parts of the eigenvalues.
Although quantum computation is not the  focus of the present paper, this
line of work reinforces the same basic point: one seeks  certified convergence and truncation-error
estimates  in a general setting not
requiring    dissipativity or contractivity assumptions. 

The present paper revisits the error analysis of truncated Carleman
linearization with these motivations in mind.
Compared to the framework of CKR~\cite{full}, we leave Krylov projection aside and
focus on the problem: whether, and with what explicit error bound, the degree-\(D\)
Carleman truncation converges to the original nonlinear dynamics. We also focus on observables,
 such as state coordinates. This is the setting most directly
relevant to reachability, where one ultimately needs certified
enclosures for the components of the state vector.

Convergence and error bounds for Carleman linearization in a general setting are
already available: most notably in the work of Forets and Pouly~\cite{Forets2}
for polynomial systems; and for possibly non-polynomial analytical systems in the work by
Amini, Sun and Motee~\cite{Amini} and Amini, Zheng, Sun and
Motee~\cite{AminiZhengSunMotee2025}. Our aim is not to replace these results by
a more general theory, but to sharpen the analysis in the polynomial case, in a
way that is useful for reachability and, more generally, for applications where
finite Carleman truncations must be certified.
Compared with previous bounds, the main point of the present analysis is that it
keeps more of the original polynomial structure visible. Rather than first
reducing a general polynomial system to a quadratic one as in~\cite{Forets2}, we
work directly with monomials in the original variables. This allows us to exploit
the degree structure of the Carleman matrix: the part induced by the linear
vector field does not mix total monomial degrees, whereas the part induced by the
nonlinear terms only moves information towards higher degrees.

Technically, our proof keeps the linear and non linear parts  separate through a
\emph{Dyson--Duhamel expansion} of the matrix exponential
\cite{Dyson,Pazy1983}.
This also allows us to control the number of
nonlinear interactions  across which information can pass from
the observable to the truncated tail. 
This yields sharper
finite-degree estimates while keeping the constants expressed in terms of the
original vector field.
In particular, keeping the effects of the linear and nonlinear parts separate   enables an
effective use of logarithmic norms. If the linear part of the original system is
contractive, in the sense of having \emph{negative} logarithmic norm (\(\mu_\infty(\cdot)\)), this should
improve the truncation bound: roughly,
\[
  \|e^{At}\|\le e^{\mu_\infty(A)t}\,.
\]
Coarser norm estimates may
lose this information, especially after lifting or after replacing the original
dynamics by a larger quadratic system.
In particular, we obtain explicit finite-degree
error bounds and derive exponential convergence of the degree-\(D\) truncation,
uniformly over a certified time horizon, with a geometric factor that reflects
the logarithmic norm of the original linear dynamics.

To sum up, when compared to previous general purpose error analyses~\cite{Forets2,Amini,AminiZhengSunMotee2025},
 the advantage of our approach
 is that it  uses degree
information explicitly and  separates the linear and nonlinear mechanisms,
which in turn  preserves the  logarithmic-norm information from the
original system.
This
difference can be clearly appreciated  in the examples of Section~\ref{sec:exp}. We
consider two qualitatively different systems, one contractive  (Stuart--Landau) and one with
limit-cycle dynamics (Van der Pol):
in both cases, our estimates are significantly  less
conservative than the  bounds of~\cite{Forets2} and  lead to better geometric factors and larger certified
time horizons.

\paragraph*{Related Work.}
Reachability analysis for nonlinear continuous and hybrid systems has been
studied through several complementary techniques. Taylor-model methods are the
basis of Flow$*$~\cite{TaylorModels,TaylorFlow,Flow,chenT}; they construct
validated polynomial flowpipe segments and associated remainder bounds. CORA
uses a broad collection of set-propagation techniques, including
linearization-based methods and conservative polynomialization
\cite{Althoff,CORA,ReachReview}. These approaches are quite effective in
practice, but their approximations are generally local to the current flowpipe
segment. In particular, Flow$*$ reuses the vector field and Taylor-model
arithmetic machinery, but the actual Taylor-model enclosure, remainder, order,
and step size are updated along the computation. Standard Jacobian-based
linearization is even more explicitly step-local, since the linear model and its
remainder depend on the current expansion point or set.

Carleman linearization has a long history in the analysis of nonlinear
differential equations~\cite{Carleman1932,Bell,Kowa}. More recently, it has
received renewed attention in verification, control, reachability, and quantum
algorithms. Forets and Pouly~\cite{Forets2} developed explicit error bounds for
polynomial systems. Forets and Schilling~\cite{Forets1} used Carleman
linearization for reachability of weakly nonlinear dissipative systems. Amini,
Sun and Motee~\cite{Amini}, and later Amini, Zheng, Sun and
Motee~\cite{AminiZhengSunMotee2025}, studied finite-section convergence for
broader classes of analytic systems. Their setting is more general than
\cite{Forets2}'s and ours, but, when specialized to the polynomial setting, Amini
et al.'s bounds are significantly more conservative in practice. Our
analysis is more specialized and aims at sharper constants for polynomial
systems and coordinate observables.
The comparison with the trajectory-dependent backward estimate
of~\cite{Forets2} is more nuanced, since that estimate also exploits
contractivity when present; nevertheless, our analysis does so directly in the original
monomial basis and for the chosen observable, resulting in a clearer exploitation of contractivity; see experiments and further discussion
in Section \ref{sec:exp}.
The approach of~\cite{Forets1} is tailored
to weakly nonlinear dissipative systems; we do not impose such a restriction,
although that approach may be more effective within its target class.

\ifmai
Our bounds should be compared with the explicit estimates of Forets and
Pouly~\cite{Forets2} and with the finite-section convergence results of Amini,
Sun and Motee~\cite{Amini} and Amini, Zheng, Sun and
Motee~\cite{AminiZhengSunMotee2025}. The former provide computable bounds for
polynomial systems, including estimates obtained after quadratic lifting and
bounds based on a priori trajectory information. The latter develop a broader
theory for analytic, possibly non-polynomial, systems under coefficient-decay
assumptions. These works give important convergence guarantees. Our contribution
is complementary: by restricting attention to polynomial systems and tracking
degree propagation more closely, we obtain bounds that are tailored to
coordinate observables and to reachability-oriented error accounting. This
difference is reflected in the examples of Section~\ref{sec:exp}, where we
consider two qualitatively different systems: a Stuart--Landau system whose
linear part has negative logarithmic norm, and a Van der Pol oscillator
representative of self-sustained limit-cycle dynamics. Our estimates are less
conservative than the norm-based/power-series bounds of~\cite{Forets2} and, in
the examples considered, lead to better geometric factors and larger certified
time horizons. The comparison with the trajectory-dependent backward estimate
of~\cite{Forets2} is more nuanced, since that estimate also exploits
contractivity; nevertheless, our analysis does so directly in the original
monomial basis and for the chosen observable.

The resulting picture is nuanced. The advantage of the present analysis is not
simply that it avoids quadratic lifting~\cite{Forets2}. In an efficient
implementation, products can be identified up to commutativity, and some of the
dimension growth of an ordered Kronecker representation can be avoided. Rather,
the advantage is structural. The proof works in the original variables, is
observable-specific, separates the linear and nonlinear mechanisms, uses degree
information explicitly, and preserves logarithmic-norm information from the
original system. In perspective, these properties make the estimates well suited
for integration into Carleman-based reachability algorithms such as CKR, and
also relevant to other settings where certified Carleman truncations are used as
a preprocessing step for linear methods.
\fi

A recent and rapidly developing use of Carleman linearization appears in quantum
algorithms for nonlinear differential equations. Quantum algorithms for linear
systems and linear ODEs~\cite{HarrowHassidimLloyd2009,
BerryChildsOstranderWang2017} provide the linear-algebraic background for this
line of work. Liu et al.~\cite{LiuEtAl2021} used Carleman linearization to
construct an efficient quantum algorithm for a class of dissipative quadratic
nonlinear ODEs. Krovi~\cite{Krovi2023} gave improved quantum algorithms for
linear and nonlinear ODEs, while Costa et al.~\cite{CostaSchleichMoralesBerry2025}
further refined Carleman-based quantum algorithms using higher-order methods and
rescaling. The recent work of Wu, Wang and Li~\cite{WuWangLi2025} revisits
Carleman linearization in quantum algorithms beyond the traditional dissipative
condition. These works have different goals from ours, but they rely on the same
central issue: a nonlinear system is useful to a linear solver only if the
Carleman truncation can be controlled by explicit error estimates.


Koopman-based
and Carleman-based approximations have also been used in recent reachability
methods. For instance, Bak et al.~\cite{Bak22} combine Koopman linearization
with random Fourier feature observables and polynomial zonotope refinement, with a focus on safety verification. Our previous work~\cite{full,BC22} introduced CKR as a reachability algorithm based
on Carleman linearization, Krylov projection, and zonotope propagation.
In
CKR,   error compensation   is local:
it is based on Taylor-remainder estimates for the chosen reduced observable
dynamics over each step, rather than on a uniform convergence analysis of the
Carleman truncation as the degree tends to infinity.
The present paper should be read as a refinement of the theoretical
error-analysis component underlying CKR.
We do not propose a new reachability
data structure here; instead, we provide sharper Carleman truncation estimates
that can be used to reduce the error inflation needed in CKR-style algorithms.

\paragraph*{Structure of the paper} We introduce Carleman linearization in Section \ref{sec:prel}. The main results and an overview of their proofs are presented in Section \ref{sec:conv}. Section \ref{sec:exp} is devoted to numerical experiments and result  comparison  with related work. Detailed proofs of the results are in Section \ref{sec:proofs}. Concluding remarks and future work are discussed in Section \ref{sec:conclusion}.

\newcommand{\unitmon}{\epsilon}
\section{Carleman Linearization}\label{sec:prel}
%
We   introduce Carleman  linearization (or embedding) in a polynomial setting. Our presentation follows mostly  
\cite{Kowa}.
For $x=(x_1,...,x_n)^T$ a vector of state variables, we consider a   system  of \ode s
	\begin{equation}
	\dot x(t)  = f(x(t))\label{eq:system}
\end{equation}
\noindent
where $f=(f_1,...,f_n)^T$ is a  polynomial vector field, that is each $f_i:\reals^n\to \reals$ is a multivariate polynomial.
Fix $x_0\in \reals^n$: we let $x(t;x_0)$ be the unique solution
of the \ode\ system with the initial condition $x(0)=x_0$. The  solution exists, is unique and is real analytic  by virtue of  the Picard-Lindel\"{o}f theorem \cite{Chicone}. For a  real  analytic function $g$ defined on some open subset of $\reals^n$ that includes  the trajectories $x(t;x_0)$,  one is typically     interested in studying the \emph{observable} of the system
\eqref{eq:system} via $g$, that is the function
\[ g\circ x(t;x_0)=g(x(t;x_0)).\]
We will stick to the case where $g$ is a multivariate polynomial.
For $\beta\in \N^n$, we will let $x^\beta:=\Pi_{i=1}^n x_i^{\beta_i}$ be a monomial of \emph{total degree} $\deg(x^\beta):=|\beta| =\sum_{i=1}^n\beta^i$. As a function,  $x^\beta:\reals^n\to \reals$. Now
fix a sequence
\[\alpha_1,\alpha_2,\alpha_3,...\]
of all nonzero {monomials},  ordered in such a way that $i<j$ implies $\deg(\alpha_i)\leq \deg(\alpha_j)$. 
Fix a \emph{truncation order} $M\geq 1$ and let
\[\alpha:=
	(\alpha_1,...,\alpha_M)^T\]
be the resulting monomial basis.
The truncation order $M$ is chosen large enough that  there is a (unique) vector
	${ v}=(\lambda_1,...,\lambda_M)^T\in \reals^M$   with {
		\begin{equation}\label{eq:g}
			g   =   \sum_{i=1}^M \lambda_i \alpha_i \;=\;{ v^T} \alpha\,.
	\end{equation}}\noindent
For a function $h:\reals^n\to \reals$ the \emph{Lie derivative} of $h$ w.r.t. $f$  is $\Lie_f(h):= \langle \nabla h,f\rangle=\sum_{j=1}^n\frac{\partial h}{\partial x_j}  \cdot f_j$, while $\Lie_f^{(k)}(h)$ is the $k$-th Lie derivative, defined inductively by $\Lie_f^{(k+1)}(h):=\Lie_f(\Lie_f^{(k)}(h))$. We shall omit the subscript ${}_f$ whenever it is understood from the context.
Since the Lie derivative of a monomial is a polynomial, for each $i=1,...,M$ there are unique coefficients $a_{ij}$'s such that
\[\lie(\alpha_i)  =   \sum_{j\geq 1}a_{ij}\alpha_j\,.\]
	%
For each $i$, only finitely many coefficients $a_{ij}$ here are nonzero.
	Let $A$ denote the $M\times M$ matrix of the coefficients $a_{ij}$  for $1\leq i,j\leq M$, and $B$ be the $M\times k$ matrix of possibly nonzero elements
	$b_{i,j}=a_{i,M+j}$; that is, $k$ is chosen large enough to ensure that, for $1\leq i\leq M$,
	we have $a_{ij}=0$ for   each $j>M+k$.  We  let
	\[\psi := (\alpha_{M+1} ,...,\alpha_{M+k} )^T.\]
	The \emph{Carleman linearization}  (or embedding) of  \eqref{eq:system} is  given by the following linear  system   in the variables
	$\zz=(z_1,...,z_M)^T$ and initial condition  {
		\begin{align}\label{eq:system2}
			\dot \zz  &  =   A    \zz + B  \cdot \psi(x(t;x_0))&&&
			\zz(0)   & =     \alpha(x_0) \;=:\;z_0\,.
	\end{align}}\noindent
The following result is an almost immediate consequence of the existence and uniqueness of the solution of \ode s (Picard-Lindel\"{o}f). For a detailed proof, see \cite[Th.3]{Bor18Full}.
	
\begin{theorem}[Carleman linearization]\label{th:section}
		$\alpha(x(t;x_0))$
		is the unique solution of the system
		\eqref{eq:system2}. 
	\end{theorem}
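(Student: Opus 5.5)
The argument has two parts: first we check that $\alpha(x(t;x_0))$ satisfies \eqref{eq:system2}, and then we show that no other function does. The second part rests on the observation that \eqref{eq:system2} is a \emph{linear, non-homogeneous} system. Its forcing term $B\cdot\psi(x(t;x_0))$ is a known, continuous (indeed real analytic) function of $t$ on the whole interval $I$ on which the trajectory $x(t;x_0)$ exists.

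\emph{Existence.} Set $w(t):=\alpha(x(t;x_0))$, so that $w_i(t)=\alpha_i(x(t;x_0))$. By the chain rule and \eqref{eq:system},
\[
\tfrac{d}{dt}\,\alpha_i(x(t))=\langle\nabla\alpha_i(x(t)),\dot x(t)\rangle=\langle\nabla\alpha_i,f\rangle(x(t))=\Lie(\alpha_i)(x(t)).
\]
Expanding $\Lie(\alpha_i)=\sum_{j\ge1}a_{ij}\alpha_j$ gives a finite sum. By the choice of $k$, the terms with $j>M+k$ vanish. Splitting the remaining sum at $j=M$ yields
\[
\sum_{j\le M}a_{ij}\alpha_j(x(t))+\sum_{l=1}^{k}b_{il}\,\alpha_{M+l}(x(t)),
\]
which is exactly the $i$-th row of $A\,w(t)+B\,\psi(x(t;x_0))$. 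The initial condition $w(0)=\alpha(x_0)=z_0$ holds by definition. One point needs care here: the ordering by degree only guarantees that $\psi$ collects the first $k$ monomials beyond $M$. This causes no difficulty, because $k$ was chosen so that every nonzero coefficient $a_{ij}$ with $i\le M$ satisfies $j\le M+k$.

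\emph{Uniqueness.} Let $z$ be any solution of \eqref{eq:system2} on $I$. The difference $e:=z-w$ satisfies the homogeneous system $\dot e=Ae$ with $e(0)=0$. Since $Ae$ is globally Lipschitz in $e$, Picard--Lindel\"of (or the explicit formula $e(t)=e^{At}e(0)$) forces $e\equiv0$. Alternatively, one can apply Picard--Lindel\"of directly to $F(t,z)=Az+B\psi(x(t;x_0))$, which is continuous in $t$ and Lipschitz in $z$ uniformly in $t$.

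There is no substantive obstacle in this argument. The only step that needs attention is the bookkeeping in the existence part, namely verifying that truncating the Lie-derivative expansion at index $M+k$ discards only zero coefficients. Everything else follows from the chain rule and standard linear ODE uniqueness.
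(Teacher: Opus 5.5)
Your proof is correct and follows the route the paper indicates. The paper only remarks that the result is an almost immediate consequence of existence and uniqueness (Picard--Lindel\"of) and defers the details to earlier work. Your argument spells that out: the chain-rule check that $\alpha(x(t;x_0))$ satisfies \eqref{eq:system2}, with $\Lie(\alpha_i)$ correctly split at indices $M$ and $M+k$, followed by uniqueness via $\dot e = Ae$, $e(0)=0$.
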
\noindent
Note that we cannot \emph{explicitly} build the system
	\eqref{eq:system2}, as the function  $\psi(x(t;x_0))$ is in general not available. This leads us to consider an approximation where we   neglect the ``overflow'' term $w(t):=B\cdot\psi(x(t;x_0))$, the \emph{truncated} Carleman linearization of dimension $M$
	\begin{align}\label{eq:carltrunc}
		\dot z &=Az  &&  z(0) =z_0(=\alpha(x_0))\,.
	\end{align}	
In what follows, \emph{we will let $z(t;z_0)=e^{At}\alpha(x_0)$ denote the unique solution of} of the truncated linear system \eqref{eq:carltrunc}. Note that $g(x(t;z_0))=v^T\alpha(x(t;x_0))$. So it is natural to consider the \emph{approximate observable} as $v^T z(t;z_0)$ and the corresponding \emph{error}, for $t$ in the interval of definition of $x(t;x_0)$:
\begin{align}\label{eq:error}
\epsilon(t;x_0)&:= v^T \alpha(x(t;x_0))-v^T z(t;z_0)\,.
\end{align}
Note that   $z(t;z_0), \epsilon(t;x_0)$  implicitly  depend  on the truncation order $M$: we may write $z_M(t;z_0)$, $\epsilon_M(t;x_0)$ to make this dependence explicit in the notation. In the special  case which most interests us, when $\alpha$ consists of all   nonzero monomials of given total degree $\leq D$,  we will employ the notation $z_{M(D)}(t;z_0), \epsilon_{M(D)}(t;x_0)$, or more simply  $z_{D}(t;z_0), \epsilon_{D}(t;x_0)$. We analyze the error $\epsilon_{D}(t;x_0)$ in the next section.

\section{Statement and Overview of the Main Results}\label{sec:conv}
\subsection{Notation and set up}\label{sub:setup}
Fix \(f:\R^n\to\R^n\),   a polynomial vector field. We extend $\deg$ to polynomials as expected and set $\deg(f):=\max_{1\leq k\leq n} \deg(f_k)$.  We will assume from now onward  that
\begin{equation}\label{eq:hpf}
p:=\deg(f)\ge2\qquad \text{and}\qquad f(0)=0.
\end{equation}
Let  \(F_1=Df(0)\) be the jacobian of $f(x)$ evaluated at $x=0$, and decompose $f$ into its linear and nonlinear parts
\[
  f(x)=F_1x+f_{\ge2}(x) \quad\text{ where } \quad (f_{\ge2})_i(x)=\sum_{2\le |\gamma|\le p} c_{i\gamma}x^\gamma.
\]
Set
\[
  h:=p-1.
\]
So that the nonlinear part can increase the total degree of a monomial by at
most \(h\). As a measure of nonlinearity of the system, we consider the coefficient
\[
  C_2:=\max_{1\le i\le n}\sum_{2\le |\gamma|\le p}|c_{i\gamma}|.
\]
The  (\(\ell^\infty\)-induced) \emph{logarithmic norm}   of a $k\times k$ matrix $U$ is
\[
\mu_\infty(U)
  :=\max_{1\le i\le k}\left((U)_{ii}+\sum_{\ell\ne i}|(U)_{i\ell}|\right)\,.
\]
We let
\[
\mu_1:=
  \mu_\infty(F_1)
\]
be the logarithmic norm of the linear part (jacobian) of the system.
Fix a truncation degree \(D\ge1\). Let
\[
  \alpha=(\alpha_1,\ldots,\alpha_M)^T
\]
be the \emph{basis} vector of all  nonzero monomials of total degree at most \(D\),
ordered by nondecreasing total degree. The  Lie derivative of $\alpha$ taken componentwise
can be written as
\[
  \Lie_f(\alpha)=A\alpha +B\psi ,
\]
where \(\psi\) collects the monomials of degrees \(D+1,\ldots,D+p-1\) that are
not retained in the truncation. We decompose $A$ into a part \(A_1\)   induced by the linear vector field \(F_1x\), and a part \(A_2\)
induced by \(f_{\ge2}\). Formally, using an Iverson-bracket style notation, for $1\leq i,j\leq M$:
\[
  A=A_1+A_2, \quad\quad\text{ where: }\quad(A_1)_{ij}:=a_{ij}\cdot [\deg(\alpha_j)\leq 1]\quad\quad\quad  A_2:=A-A_1.
\]
%
For a row or column vector $q$, we let its \emph{support} be $\supp(q):=\{j:  q_j\neq 0\}$. For $J\subseteq \N$, we say that $q$ \emph{is supported on    degrees in} $J$ if $\{\deg(\alpha_j):j\in \supp(q)\}\subseteq J$.
Recalling  that
$g(x)=v^T\alpha(x)$,
we will assume from now onward that  the vector \(v^T\) is \emph{supported  on   degree 1}. For
example, for the coordinate observable \(g(x)=x_i\), one has \(\|v\|_1=1\). This is done only for notational simplicity.
Also recall the definition of error $\epsilon_D$ at truncation order $D$, for every $t$ in the interval of definition of $x(t;x_0)$:
\[
  z_D(t;z_0):=e^{At}\alpha(x_0),
  \qquad  \qquad
     \epsilon_D(t;x_0):=|v^T\alpha(x(t;x_0))-v^T z_D(t;z_0)|.
\]
Given a time horizon $T>0$ such that  $[0,T]$ is contained in the interval of definition of $x(t;x_0)$, we will assume an \emph{a priori} bound $R$ on the exact trajectory  on \([0,T]\)
\begin{equation}\label{eq:Rsup}
  \sup_{t\in[0,T]}\|x(t;x_0)\|_\infty\le R\,<+\infty,
  \qquad
  R_+:=\max(R,1).
\end{equation}
The \emph{support gap} for a degree-one observable is defined as:
\[
  k_D  
      :=\left\lceil\frac{D-h}{h}\right\rceil
\]
Since \(D\ge1\), this is equivalently
\[
  k_D+1=\left\lceil\frac{D}{h}\right\rceil.
\]
\ifmai
We will use the following  time scale adapted to the sign of $\mu_1$
\[
  \sigma:=
  \begin{cases}
    1, & \mu_1<0,\\
    h, & \mu_1\ge0.
  \end{cases}
\]
\fi
Define $\mu^{\text{+}}_1:=\max(\mu_1,0)$ and  $\mu^{{-}}_1:=\min(\mu_1,0)$, the positive and negative parts of $\mu_1$. For $t\in[0,T]$, define
\[
  \Theta(t):=\int_0^t e^{(\mu^{{-}}_1+h\mu^{\text{+}}_1) s}\,ds = \begin{cases}
\displaystyle \frac{1-e^{\mu_1t}}{-\mu_1}, & \mu_1<0,\\[1.2em]
t, & \mu_1=0,\\[0.8em]
\displaystyle \frac{e^{h\mu_1t}-1}{h\mu_1}, & \mu_1>0.
\end{cases}
\]
\ifmai
Equivalently,
\[
\Theta(t)=
\begin{cases}
\displaystyle \frac{1-e^{\mu_1t}}{-\mu_1}, & \mu_1<0,\\[1.2em]
t, & \mu_1=0,\\[0.8em]
\displaystyle \frac{e^{h\mu_1t}-1}{h\mu_1}, & \mu_1>0.
\end{cases}
\]
\fi
Finally, set
\[
  P_j:=\prod_{r=0}^{j-1}(1+rh),\qquad P_0:=1.
\]

\subsection{Statement of the results}\label{sub:stat}
We state the main results, a general bound (Theorem \ref{thm:finite}) and its corollaries giving   simpler, although slightly less tight, geometric convergence, at two different time regimes. We assume the notation fixed in the previous subsection.

\begin{theorem}[General bound]
\label{thm:finite}
Assume \eqref{eq:hpf} and \eqref{eq:Rsup} hold. For every \(D\ge1\) and every \(t\in[0,T]\),
\begin{equation}\label{eq:finite-refined-bound}
  \epsilon_D(t;x_0)
  \le
  \|v\|_1DC_2R^{D+1}R_+^{h-1}
  \sum_{j=k_D}^{D-1}P_jC_2^j
  \frac{\Theta(t)^{j+1}}{(j+1)!}.
\end{equation}
\ifmai
Consequently,
\begin{equation}\label{eq:finite-refined-bound}
\boxed{
  \sup_{t\in[0,T]}\epsilon_D(t;x_0)
  \le
  \|v\|_1DC_2R^{D+1}R_+^{h-1}
  \sum_{j=k_D}^{D-1}P_jC_2^j
  \frac{\Theta(T)^{j+1}}{(j+1)!}.
}
\end{equation}
\fi
\end{theorem}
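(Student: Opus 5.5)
We compare the exact lifted dynamics with the truncated one through the variation-of-constants formula. By Theorem~\ref{th:section}, $\alpha(x(t))$ solves $\dot\zz=A\zz+w(t)$ with $w(t)=B\psi(x(t;x_0))$, so
\[
v^T\alpha(x(t;x_0))-v^Tz_D(t;z_0)=\int_0^t v^Te^{A(t-s)}B\,\psi(x(s;x_0))\,ds .
\]
The whole task is then to bound the row vector $v^Te^{A\tau}B$ and to bound $\psi$ along the trajectory using \eqref{eq:Rsup}.

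For $\psi$, every entry of $\psi(x(s))$ is a monomial of degree between $D+1$ and $D+h$. Hence it is at most $R^{D+1}R_+^{h-1}$ in absolute value, which produces the factor $R^{D+1}R_+^{h-1}$ in \eqref{eq:finite-refined-bound}. For the row vector, we use the Dyson--Duhamel expansion of $e^{A\tau}$ with $A=A_1+A_2$:
\[
e^{A\tau}=\sum_{j\ge0}\int_{0\le s_1\le\cdots\le s_j\le\tau} e^{A_1(\tau-s_j)}A_2e^{A_1(s_j-s_{j-1})}A_2\cdots A_2e^{A_1s_1}\,ds .
\]
We then use the degree structure. Multiplication on the right by $A_1$ preserves the degree of the support of a row vector, while each $A_2$ raises it by between $1$ and $h$. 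Since $v^T$ is supported on degree $1$, a term with $j$ factors $A_2$ is supported on degrees $\le 1+jh$. Such a term is killed by $B$ unless that support reaches degree $D-h+1$, which is the lowest degree whose Lie derivative overflows. This forces $jh\ge D-h$, i.e.\ $j\ge k_D$. Terms with $j\ge D$ vanish as well, because each $A_2$ strictly raises degree and degrees stay $\le D$. This explains the range $k_D\le j\le D-1$.

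For the norms, restricted to degree-$d$ monomials $A_1$ acts as the derivation induced by $F_1$. We would show that its $\ell^\infty$ logarithmic norm in the row (transposed) action is at most $d\mu_1$: the diagonal entries add up as $\sum_i\beta_i(F_1)_{ii}$, and the off-diagonal entries are bounded by $\sum_i\beta_i\sum_{\ell\neq i}|(F_1)_{i\ell}|$. Because of the transpose, some care about rows versus columns is needed here; that is a check to make, and if necessary we would use the $\ell^1$ bound on row vectors instead. This gives $\|e^{A_1\tau}\|\le e^{d\mu_1\tau}$ on degree $d$. Similarly, $\|A_2\|$ restricted to degree $d$ is at most $dC_2$, and so is $\|B\|$ on degree $\le D$. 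Along a path the degrees are $1=d_0<d_1<\cdots$, with $d_r\le 1+rh$. The product of the $A_2$ factors is therefore bounded by $C_2^j\prod_{r<j}(1+rh)=C_2^jP_j$, and the final $B$ factor contributes $DC_2$.

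The main obstacle is the exponentials, namely collapsing $\prod_r e^{d_r\mu_1(s_{r+1}-s_r)}$ into the $\Theta$ structure. For $\mu_1<0$ we would bound $d_r\mu_1\le\mu_1$. For $\mu_1>0$ we would instead bound $d_r\le 1+rh$ and integrate. Our plan is to show that the iterated simplex integral, together with the outer $s$-integral, is dominated by $\Theta(t)^{j+1}/(j+1)!$. The natural route is the substitution $u_r=\int_0^{s_r}e^{(\mu_1^-+h\mu_1^+)\sigma}d\sigma$, which maps the simplex into $\{0\le u_1\le\cdots\le u_{j+1}\le\Theta(t)\}$ with a Jacobian that absorbs the exponential weights. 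Verifying that this Jacobian is indeed dominated, especially the $h\mu_1^+$ growth from the degree-dependent rates, is the delicate computation. Summing over $j$ and multiplying by $\|v\|_1$ then gives \eqref{eq:finite-refined-bound}.
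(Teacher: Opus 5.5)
Your architecture matches the paper's: variation of constants, the Dyson expansion in $A=A_1+A_2$, the support window $k_D\le j\le D-1$, and $\ell^1$ propagation of row vectors via $\|qU\|_1\le\|q\|_1\|U\|_\infty$. This gives $P_jC_2^j$ and $\|w\|_\infty\le DC_2R^{D+1}R_+^{h-1}$. It also settles your rows-versus-columns worry: row $i$ of $A_1$ holds the coefficients of $\mathcal L_{F_1x}(\alpha_i)$, so the row-sum logarithmic norm is the relevant one, and it is at most $m\mu_1$ on degree $m$.

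The gap is in the step you flag as the main obstacle, and for $\mu_1<0$ your stated plan actually fails. Suppose you bound $d_r\mu_1\le\mu_1$ on every interval. Then the exponentials multiply to $e^{\mu_1\tau}$ only, with $\tau$ the total Dyson time. The simplex then contributes just its volume $\tau^j/j!$, and there are no per-variable weights left for the substitution $u_r=\Theta(s_r)$ to absorb. In fact $\int_0^t e^{\mu_1\tau}\tau^j/j!\,d\tau>\Theta(t)^{j+1}/(j+1)!$ for every $j\ge1$ and $t>0$: compare derivatives and use $\Theta(\tau)<\tau$. So this route cannot yield the stated bound.

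The missing ingredient is to use the degree \emph{growth} along the product, not just its range, and then telescope. After $r$ factors $A_2$, the row vector $q_r$ is supported on degrees in $[1+r,1+rh]$. Work with the whole $q_r$ under these uniform bounds rather than with individual degree paths, so that no sum over degree sequences appears. Set $\ell_0=\tau-s_j$, $\ell_r=s_{j-r+1}-s_{j-r}$ for $1\le r\le j-1$, and $\ell_j=s_1$. Then the following identities hold:
\begin{itemize}
\item for $\mu_1<0$, using the lower bound $1+r$: $\sum_{r=0}^j(1+r)\ell_r=\tau+s_1+\cdots+s_j$;
\item for $\mu_1\ge0$, using the upper bound $1+rh$: $\sum_{r=0}^j(1+rh)\ell_r=\tau+h(s_1+\cdots+s_j)$.
\end{itemize}
In both cases the exponential weight is at most $e^{\mu_1\tau}\prod_{i=1}^j\Theta'(s_i)$. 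The integral of $\prod_i\Theta'(s_i)$ over $\Delta_j(\tau)$ is exactly $\Theta(\tau)^j/j!$. The outer variable carries only $e^{\mu_1\tau}$, and $e^{\mu_1\tau}\le\Theta'(\tau)=e^{(\mu_1^{-}+h\mu_1^{+})\tau}$ because $h\ge1$. This single inequality is the whole ``Jacobian'' check you left open. It gives $\int_0^t e^{\mu_1\tau}\Theta(\tau)^j\,d\tau\le\Theta(t)^{j+1}/(j+1)$, and hence the factor $\Theta(t)^{j+1}/(j+1)!$.
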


In order to state the geometric corollaries, it is convenient to introduce
\[
  \rho(T):=hC_2\Theta(T).
\]

\begin{corollary}[First-tail geometric regime]
\label{cor:first-tail}
Assume \eqref{eq:hpf} and \eqref{eq:Rsup} hold. Assume
\[
  \rho(T)<1.
\]
Then, for all \(D\ge1\),
\begin{equation}\label{eq:first-tail-bound}
  \sup_{t\in[0,T]}\epsilon_D(t;x_0)
  \le K_1\,\gamma_1^{\,k_D},
\end{equation}
where
\[
  \gamma_1:=R^h \rho(T),
\quad
\text{and}\quad
  K_1:=\frac{\|v\|_1\rho(T)R^2R_+^{2h-2}}{1-\rho(T)}.
\]
In particular, if \(\gamma_1<1\), then
  $\sup_{t\in[0,T]}\epsilon_D(t;x_0)\to 0$
   as $D\to+\infty$.
\end{corollary}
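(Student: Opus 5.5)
The plan is to derive the corollary directly from Theorem~\ref{thm:finite} by elementary estimates on the right-hand side of \eqref{eq:finite-refined-bound}. First, $\Theta$ is nondecreasing on $[0,T]$, since it is the integral of a positive function. So for every $t\in[0,T]$ we may replace $\Theta(t)$ by $\Theta(T)$ in each summand, which gives a bound uniform in $t$. We write $\Theta=\Theta(T)$ and $\rho=\rho(T)=hC_2\Theta$.

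The key step is to bound each summand by a power of $\rho$. Since $h\ge1$, we have $1+rh\le h(r+1)$, hence
\[
P_j\le h^j j!,
\qquad\text{so}\qquad
P_jC_2^j\frac{\Theta^{j+1}}{(j+1)!}\le \frac{(hC_2\Theta)^j\,\Theta}{j+1}.
\]
Next we absorb the prefactor $DC_2$. For $j\ge k_D$ we have $j+1\ge k_D+1=\lceil D/h\rceil\ge D/h$, so $D/(j+1)\le h$. Therefore
\[
DC_2\,\frac{(hC_2\Theta)^j\,\Theta}{j+1}\le hC_2\Theta\,\rho^j=\rho^{j+1}.
\]
Extending the finite sum to infinity and using $\rho<1$ gives
\[
DC_2\sum_{j=k_D}^{D-1}\cdots\;\le\;\sum_{j\ge k_D}\rho^{j+1}=\frac{\rho^{k_D+1}}{1-\rho}.
\]

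It remains to write $R^{D+1}$ in terms of $k_D$; I expect this to need the most care, because the cases $R<1$ and $R\ge1$ behave differently. From $k_D+1=\lceil D/h\rceil$ we get $hk_D<D\le h(k_D+1)$, and since $D$ and $hk_D$ are integers, $D\ge hk_D+1$.
\begin{itemize}
\item If $R\ge1$, then $R^{D+1}\le R^{hk_D+h+1}=R^{hk_D}R^2R^{h-1}$.
\item If $R<1$, then $R^{D+1}\le R^{hk_D+2}$.
\end{itemize}
In both cases $R^{D+1}\le R^{hk_D}R^2R_+^{h-1}$. Combining this with the factor $R_+^{h-1}$ already present in \eqref{eq:finite-refined-bound}, the right-hand side is at most
\[
\|v\|_1R^2R_+^{2h-2}\frac{\rho}{1-\rho}\,(R^h\rho)^{k_D}=K_1\gamma_1^{k_D}.
\]
This is \eqref{eq:first-tail-bound}.

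Finally, $k_D=\lceil D/h\rceil-1\to\infty$ as $D\to\infty$, and $K_1$ does not depend on $D$. Hence $\gamma_1<1$ implies $\sup_{t\in[0,T]}\epsilon_D(t;x_0)\to0$.
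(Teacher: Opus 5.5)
Your proposal is correct and follows the paper's proof essentially step by step: $P_j\le h^jj!$, then absorbing $D/(j+1)\le h$ via $k_D+1=\lceil D/h\rceil$, then the geometric tail bound $\rho^{k_D}/(1-\rho)$, and finally $R^{D+1}\le R^2R_+^{h-1}(R^h)^{k_D}$ from $D-1-hk_D\in\{0,\ldots,h-1\}$, which your two-case argument on $R$ reproduces. The remaining differences are cosmetic, such as folding $hC_2\Theta(T)$ into $\rho^{j+1}$.
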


\begin{corollary}[Last-tail geometric regime]
\label{cor:last-tail}
Assume \eqref{eq:hpf} and \eqref{eq:Rsup} hold. Assume
\[
  \rho(T)>1
  \qquad\text{and}\qquad
  R\rho(T)<1.
\]
Then, for all \(D\ge1\),
\begin{equation}\label{eq:last-tail-bound}
  \sup_{t\in[0,T]}\epsilon_D(t;x_0)
  \le K_2\,\gamma_2^{\,k_D},
\end{equation}
where
\[
  \gamma_2:=(R\rho(T))^h
\quad
\text{and}\quad
  K_2:=\frac{\|v\|_1\rho(T)RR_+^{h-1}}{\rho(T)-1}.
\]
Consequently,
  $\sup_{t\in[0,T]}\epsilon_D(t;x_0)\to 0$
   as $D\to+\infty$.
\end{corollary}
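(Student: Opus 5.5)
The plan is to derive the corollary directly from Theorem~\ref{thm:finite}: bound each summand of \eqref{eq:finite-refined-bound} by a power of $\rho(T)$, sum the resulting geometric series (which is dominated by its \emph{last} term because $\rho(T)>1$), and then convert the power of $R\rho(T)$ in degree $D$ into a power of $\gamma_2$ in $k_D$. Since $\Theta$ is nondecreasing in $t$ (its integrand is positive), it suffices to prove the bound with $\Theta(t)$ replaced by $\Theta(T)$ and then take the supremum over $t\in[0,T]$. Write $\rho=\rho(T)$ and $\Theta=\Theta(T)$.

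\emph{Step 1 (termwise bound).} Since $h\ge1$, we have $1+rh\le h(r+1)$, hence $P_j\le h^j j!$. Each summand then satisfies
\[
P_jC_2^j\frac{\Theta^{j+1}}{(j+1)!}\le \frac{(hC_2\Theta)^j\,\Theta}{j+1}=\frac{\rho^{j}\,\Theta}{j+1}.
\]
Multiplying by the prefactor $DC_2$ gives $DC_2\Theta\rho^j/(j+1)=\frac{D}{h(j+1)}\rho^{j+1}$. The sum starts at $j\ge k_D$, so $j+1\ge k_D+1=\lceil D/h\rceil\ge D/h$. Therefore $D/(h(j+1))\le1$, which removes the factor $D$ from the bound. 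We obtain
\[
\epsilon_D(t;x_0)\le \|v\|_1R^{D+1}R_+^{h-1}\sum_{m=k_D+1}^{D}\rho^{m}.
\]
If $k_D>D-1$, the sum is empty and the claim is trivial.

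\emph{Step 2 (last-tail summation).} Because $\rho>1$,
\[
\sum_{m=k_D+1}^{D}\rho^m=\frac{\rho(\rho^{D}-\rho^{k_D})}{\rho-1}\le\frac{\rho^{D+1}}{\rho-1}.
\]
Hence
\[
\epsilon_D\le\frac{\|v\|_1\rho RR_+^{h-1}}{\rho-1}\,(R\rho)^D=K_2\,(R\rho)^D.
\]

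\emph{Step 3 (degree to support gap).} By definition $k_D<\frac{D-h}{h}+1=D/h$, so $hk_D\le D$. Since $R\rho<1$, it follows that $(R\rho)^D\le(R\rho)^{hk_D}=\gamma_2^{k_D}$, which is \eqref{eq:last-tail-bound}. Convergence follows because $\gamma_2<1$ and $k_D\to\infty$ as $D\to\infty$.

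The only delicate point is Step 1: it requires choosing the crude but exact-enough estimate $P_j\le h^jj!$ and observing that the lower summation limit $k_D$ is exactly what cancels the polynomial factor $D$. The rest is bookkeeping. It is also worth noting that the hypotheses $\rho>1$ and $R\rho<1$ force $R<1$, so that $R_+=1$ in this regime, although this is not needed for the argument.
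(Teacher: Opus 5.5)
Your proposal is correct and follows essentially the same route as the paper's proof: the bound $P_j\le h^jj!$, the cancellation of $D$ via $D/(j+1)\le h$ for $j\ge k_D$, domination of the geometric sum by its last term since $\rho(T)>1$, and the conversion $(R\rho)^D\le(R\rho)^{hk_D}$ using $hk_D\le D$ and $R\rho<1$. The only difference is bookkeeping: you absorb $hC_2\Theta(T)=\rho(T)$ into the sum one step earlier and re-index by $m=j+1$. Your empty-sum case never occurs, since $k_D+1=\lceil D/h\rceil\le D$.
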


\begin{remark}[On trajectory bounds, the transition case, and infinite horizons]{\em
\label{rem:R-critical-infinite}
We record three comments on the hypotheses and on the geometric corollaries.

\begin{enumerate}
\item \emph{On the a priori bound \(R\).}
The assumption
\[
  \sup_{t\in[0,T]}\|x(t;x_0)\|_\infty\le R
\]
should not be viewed as a serious limitation of the method. Such a bound can be
obtained analytically, for instance by Lyapunov estimates, comparison arguments
or Gronwall-type inequalities, and it can also be obtained by certified numerical
methods, such as interval integration, Taylor-model enclosures, or reachability
tools producing validated flowpipe enclosures
\cite{Chicone,Nediakov,TaylorModels,TaylorFlow,Flow,CORA,ReachReview}.
In a reachability computation, \(R\) would typically be chosen locally from the
current reachset and the current time step. Thus, for an autonomous polynomial
system, the truncated Carleman matrix is fixed once the truncation degree is
chosen, while the trajectory bound and the corresponding error enclosure may be
updated along the flowpipe.

This type of assumption is also standard in related Carleman error estimates.
For instance, the backward-integration estimate of Forets--Pouly uses an
a priori bound on the solution, and the local-bound version of the estimates of
Amini--Sun--Motee and collaborators similarly assumes a bound on the trajectory
over the time interval under consideration
\cite{Forets2,Amini,AminiZhengSunMotee2025}.

\item \emph{On the transition value \(\rho=1\).}
The two geometric corollaries are stated separately in the regimes
\[
  \rho:=\rho(T)<1
  \qquad\text{and}\qquad
  \rho>1 .
\]
The apparent critical behaviour at \(\rho=1\) is an artifact of replacing the
finite geometric sum by one-sided tail estimates. Indeed, the proof actually
produces a finite sum of the form
\[
  \sum_{j=k_D}^{D-1}\rho^j,
\]
which is perfectly well defined at \(\rho=1\), where it equals \(D-k_D\). One
could therefore state a single estimate using the exact finite geometric sum
\[
  \sum_{j=k_D}^{D-1}\rho^j
  =
  \begin{cases}
  \dfrac{\rho^{k_D}-\rho^D}{1-\rho}, & \rho\ne1,\\[1.1em]
  D-k_D, & \rho=1.
  \end{cases}
\]
We have kept the simpler first-tail and last-tail statements because they give
clean geometric factors and are easier to read. The transition case can be
handled directly from the finite-sum estimate, at the cost of slightly heavier
notation.

\item \emph{On infinite time horizons.}
The present theorem can also yield convergence on \([0,+\infty)\), but only in a
genuinely contractive regime. Assume that
\[
  \mu_1<0
  \qquad\text{and}\qquad
  R_\infty:=\sup_{t\ge0}\|x(t;x_0)\|_\infty<\infty .
\]
Then
\[
  \Theta_\infty:=\lim_{T\to\infty}\Theta(T)=\frac1{-\mu_1},
  \qquad
  \rho_\infty:=(p-1)C_2\Theta_\infty
  =
  \frac{(p-1)C_2}{-\mu_1}.
\]
A simple sufficient condition for convergence uniformly on the infinite time
horizon is
\[
  R_\infty
  \max\left\{
    \rho_\infty,\,
    \rho_\infty^{1/(p-1)}
  \right\}
  <1 .
\]
Equivalently, in the first-tail regime \(\rho_\infty<1\), it is enough that
\[
  R_\infty^{p-1}\rho_\infty<1,
\]
whereas in the last-tail regime \(\rho_\infty>1\), it is enough that
\[
  R_\infty\rho_\infty<1.
\]
At the borderline \(\rho_\infty=1\), the exact finite-sum estimate still gives
convergence provided \(R_\infty<1\). For \(\mu_1\ge0\), the time factor
\(\Theta(T)\) is unbounded as \(T\to+\infty\), and the present estimates do not
give an infinite-horizon convergence guarantee.
\end{enumerate}}
\end{remark}

\subsection{Proof overview}
We summarize below the structure of the argument for the main theorem and its corollaries, referring to
Section~\ref{sec:proofs} for the detailed derivation. The proof proceeds in four main
steps.

\begin{enumerate}
\item \textbf{Variation of parameters.}
The exact Carleman coordinates  $\alpha(x(t;x_0))$ satisfy a
nonhomogeneous linear system \eqref{eq:system2}.
The truncated Carleman approximation $z_D(t;x_0)$ solves the corresponding homogeneous
system \eqref{eq:carltrunc}.   Hence the error vector $\alpha(x(t;x_0))- z_D(t;x_0)$ solves the difference between these two systems. By variation of parameters \cite[Prop.2.67]{Chicone}
\[
\epsilon_D(t;x_0)=\big|v^T(\alpha(x(t;x_0))-z_D(t;x_0))\big|
=
|\int_0^t v^Te^{A(t-\tau)}w(\tau)\,d\tau|\leq  \int_0^t |v^Te^{A(t-\tau)}w(\tau)|\,d\tau
\]
where \(w(\tau)=B\psi(x(\tau;x_0))\) collects the overflow terms generated by monomials of degree
\(>D\).  Our main task is estimating the integrand $v^Te^{A(t-\tau)}w(\tau)\,d\tau$.

\item \textbf{Dyson expansion.}
Rather than Taylor-expanding the matrix exponential $e^{A(t-\tau)}$, we consider a type of \emph{Dyson expansion} \cite{Dyson,Pazy1983}, which in its expression keeps   the effect of the linear ($A_1$) and nonlinear ($A_2$) parts distinct.   We will have to wait for the next step of the proof (step 3) to see the advantage of doing so. For now, after splitting    the Carleman matrix as
$
A=A_1+A_2,
$
we expand \(e^{(A_1+A_2)s}\) ($s:=t-\tau$)   by the Dyson-Duhamel formula, which  allows us to rewrite the integrand in the variation of parameters expression above  as
{\small
\[
v^T e^{(A_1+A_2)s}w(\tau)
=
\sum_{j=0}^{\infty}
\int_{\Delta_j(s)}
v^T e^{A_1(s-s_j)}
A_2e^{A_1(s_j-s_{j-1})}
\cdots
A_2e^{A_1s_1}\,w(\tau)
\,ds_1\cdots ds_j
\]
}\noindent
where
$
\Delta_j(s)=\{0\le s_1\le\cdots\le s_j\le s\}\subseteq \reals^j
$.
Consider  the multiplications  inside the integral  above, from left to right. Starting from the vector $v^T$ supported on degree 1, each $e^{A_1\theta}$ leaves the support degree  unchanged, while each $A_2$ causes an increase   between 1 and $h$. So the final row vector, before $w(\tau)$, is supported only on degrees between $1+j$ and $\min\{1+hj,D\}$.
On the other hand, the overflow term \(w(\tau)\) is supported only on degrees at least \(D-h+1\).
Therefore all   summands with  index $j$ too small or too large vanish: explicitly, we are left with indices $j$ in the range
\[
\left\lceil\frac{D-h}{h}\right\rceil=k_D\leq j\leq D-1.
\]

\item \textbf{Estimates of the non-vanishing Dyson terms.}
We now have to estimate  the remaining summands  with indices $j$ in the above specified range. We rely on matrix bound  estimates. 
Starting from $v^T$ and considering the  factors from left to right, we track   the support degree   reached by the resulting row vector  at each point. Before the \(r\)-th
occurrence of \(A_2\), the   support of the current row vector say $q$ has reached degree at most \(1+rh\), so each such occurrence contributes a factor bounded by
\[
\|q A_2\|_1
\le
\|q \|_1 (1+rh)C_2.
\]
The factors contributed by the occurrences of the exponentials \(e^{A_1\theta}\)    are also estimated degree by
degree, using the logarithmic norm of the linear part:
\[\|q e^{A_1\theta}\|_1\le \|q\|_1e^{b\mu_1\theta}\]
where $b$ bounds the support degree of $q$, from above if $\mu_1\geq 0$ and from below if $\mu_1<0$. Crucially, if $\mu_1<0$ \emph{these factors are   exponentially decreasing with} $\theta$: this is where the splitting $A_1+A_2$ shows its benefit. Finally, $w(\tau)=B\psi(x(\tau;x_0))$ is bounded by $\approx C^D$, where the constant $C$ depends on the norm of $B$   and on the bound $R$ on $\|x(\tau;x_0)\|_\infty$. After factoring out  the bounds $(1+rh)C_2$ and $C^D$, not depending on some $s_i$,   the integrand  in the $j$-th term of the Dyson expansion reduces to a product of exponentials, which is proven to reduce to
$
\Theta(t)
$.
\ifmai
\begin{cases}
\dfrac{1-e^{\mu_1T}}{-\mu_1}, & \mu_1<0,\\[1.2em]
T, & \mu_1=0,\\[0.8em]
\dfrac{e^{(p-1)\mu_1T}-1}{(p-1)\mu_1}, & \mu_1>0.
\end{cases}
\]
\fi

\item \textbf{Summation and extraction of geometric rates.}
Inserting the refined Dyson estimate into the variation-of-parameters formula gives
the finite bound stated in \eqref{eq:finite-refined-bound}. The resulting sum
contains the product
\[
P_j=\prod_{r=0}^{j-1}(1+rh).
\]
Using
\[
P_j\le h^j j!,
\]
one obtains the geometric bounds stated in
\eqref{eq:first-tail-bound} and \eqref{eq:last-tail-bound}. In the first-tail
regime,
$
\rho:=\rho(T)<1,
$
the corresponding geometric factor is
\[
\gamma_1
=
R^{h}\rho.
\]
Thus, if \(\gamma_1 <1\), the truncation error converges to zero exponentially in
\(k_D\), hence exponentially in \(D\). In the complementary last-tail regime, the
geometric factor is the one displayed in \eqref{eq:last-tail-bound}; this covers
cases where \(\rho>1\) but the product \(R\rho\) is still smaller than one.
\end{enumerate}

\section{Experiments and comparison}\label{sec:exp}
We consider two standard systems with qualitatively
different behaviour: the Stuart--Landau system, where the linear part is
contractive in logarithmic norm, and the Van der Pol system, whose dynamics
exhibit a limit cycle.
Our error bounds on these systems should be compared with the explicit estimates of Forets and
Pouly~\cite{Forets2}.  Amini,
Sun and Motee~\cite{Amini} and Amini, Zheng, Sun and Motee
\cite{AminiZhengSunMotee2025} also provide bounds applicable to general, possibly non-polynomial systems, under coefficient-decay
assumptions. However, numerical experiments show that their bounds are significantly more conservative than those of~\cite{Forets2} when specialized to polynomial systems.

Compared with Theorems 4.2 and 4.3 of Forets--Pouly~\cite{Forets2}, our
analysis keeps the original polynomial degree structure explicit. Theorem 4.2
uses backward integration, strongly reminiscent of   Bellman's method \cite{Bell}.
This result can exploit
logarithmic-norm contractivity, but it is formulated after quadratic lifting. Like ours, it
depends on an a priori trajectory bound. Theorem 4.3 removes this trajectory
bound by means of a generating-function argument, but at the price of more
global coefficient estimates. Our proof instead works directly in the original
monomial basis and for a prescribed observable. By decomposing the Carleman
matrix into its degree-preserving linear part and degree-raising nonlinear part,
and by expanding the exponential with a Dyson--Duhamel formula, we obtain
degree-local estimates that track how the discarded tail can propagate back to
the observable. Thus the main gain is structural: the bound is
observable-specific, avoids a preliminary quadratic reduction, and retains the
logarithmic-norm information of the original linear dynamics.

\subsection{Example 1: Van der Pol system}\label{vdp}
\noindent
We consider the Van der Pol (VDP) system in the polynomial form
\begin{equation}\label{eq:VDP}
	\begin{aligned}
		\dot x_1 &= -\frac{x_1^3}{3}+x_1-x_2,\\
		\dot x_2 &= x_1,
	\end{aligned}
\end{equation}
with initial condition $x_0=(0.5,0)^T$. This is the same system considered in \cite{Forets2}. As an observable, set
$g(x)=x_1$, thus  \(\|v\|_1=1\).
Since \eqref{eq:VDP} is a polynomial vector field of degree \(p=3\), we have \(h=p-1=2\) and
\begin{equation}
k_D
=
\max\!\left\{
\left\lceil
\frac{D-p+1}{p-1}
\right\rceil,
0
\right\}
=
\max\!\left\{
\left\lceil
\frac{D-2}{2}
\right\rceil,
0
\right\}.
\end{equation}
The linear component of \eqref{eq:VDP} is characterized by the Jacobian matrix evaluated at the origin, that is given by
\[F_1=
\begin{pmatrix}
	1 & -1\\
	1 & 0
\end{pmatrix},
\]
with logarithmic norm
$\mu_1=\mu_\infty(F_1)=\max\{1+|-1|,\;0+|1|\}=2>0$.
On the other hand, the nonlinear component of \eqref{eq:VDP}  is
$f_{\ge 2}(x)=
(
	-\dfrac{x_1^3}{3},\,
	0)^T$,
giving
$C_2
=
\max_i
\sum_{2\le |\gamma|\le 3}
|c_{i\gamma}|
=
\frac13.
$
Since \(\mu_1>0\), we get
\[\Theta(T)
=
\frac{e^{h\mu_1T}-1}{h\mu_1}
=
\frac{e^{4T}-1}{4}.\]

\paragraph{Maximal theoretical convergence horizon}
In our framework, the convergence guarantees for the error bounds depend on the quantities
$h = p-1$,
$
\rho(T)  = hC_2\Theta(T)$ and
\[
R(T) := \sup_{0\le t\le T}\|x(t;x_0)\|_\infty.
\]
(we implicitly require $x(t;x_0)$ well defined for $0\leq t\le T$).
The geometric  Corollaries \ref{cor:first-tail} and \ref{cor:last-tail} establish convergence under two distinct regimes:
\begin{itemize}
	\item \emph{First-tail regime}: if $\rho(T)<1$ and
	$\gamma_1(T):=R(T)^h\rho(T)<1$,
	\item \emph{Last-tail regime}: if $\rho(T)>1$ and
	$R(T)\rho(T)<1$.
\end{itemize}
Equivalently, the second regime can be expressed as
$\gamma_2(T):=(R(T)\rho(T))^h < 1$.
The maximal achievable convergence horizon guaranteed by these results is therefore (convening that $\sup \es:=0$)
{\small
\begin{equation}\label{eq:Tmax}
	T_{\max} = \max\!\Bigl(
	\sup\{T\ge0: \rho(T)<1,\; R(T)^h\rho(T)<1\},\;
	\sup\{T\ge0: \rho(T)>1,\; R(T)\rho(T)<1\}
	\Bigr).
\end{equation}
}\noindent
The value $T_{\max}$ can be computed numerically.  For the Van der Pol system,   the maximum is attained above the critical point $T^*=\frac 1 4 \log 7\approx 0.4864$ where $\rho(T^*)=1$,  in the last-tail regime:
\[T_{\max} \approx 0.561.\]

\paragraph{Numerical illustration and comparison with Forets--Pouly}
Consider the Carleman linearization \eqref{eq:carltrunc}, with  a `budget' fixed to  $M=150$  monomials in $\alpha$.  Let
\[d(M,n) := \max \left\{ d \geq 1 : \binom{n+d}{d} - 1 \leq M \right\}\]
that is,   the maximum degree $d$  such that there are  at most  $M$  non-zero monomials  in $n$ variables  of degree $\leq d$. In terms of $D$,  $M=150$ allows us a truncation degree of $D=d(150,2)=16$.
We  report numerical error bounds for the VDP system, across three   time horizons:  $T=0.2$, $T=0.4$, and $T=0.5$, all within the theoretical maximum $T_{\max}\approx 0.561$. This allows us to observe the behavior of the theoretical bounds in both the first- and last-tail regimes.
A certified upper bound $R$  on $\|x(t;x_0)\|_\infty$ for $t\in [0,T]$ can be computed in each case   by   using either a validated numerical enclosure \cite{Nediakov} or an analytic comparison estimate. A comparison-based approach yields the conservative bounds $R=0.505$ on $[0,0.2]$, $R=0.72$ on $[0,0.4]$ and  $R=0.78$ on $[0,0.5]$; we will stick to these values below. %
In each case, we compute both the  bound on the error $|\epsilon_D|$ given by Theorem \ref{thm:finite} as well as those given by the geometric corollaries. 
Note that $T=0.2, 0.4$ are below the critical time $T^*\approx  0.486$ and fall therefore in the first-tail regime of Corollary \ref{cor:first-tail}, while $T=0.5$ falls in the last-tail regime of Corollary \ref{cor:last-tail}.

We also
 compare our bounds with  those provided by Forets and Pouly~\cite[Th.4.2,Th.4.3]{Forets2}, with the same monomial budget   $M=150$, across the three values of $T$ considered above.
 %
 The method of \cite{Forets2} requires lifting the system to a quadratic form, by introducing auxiliary variables corresponding to   products of the original variables. This leads to an expansion from the original $n=2$ to $\tilde n=5$ of the number of state variables\footnote{Granting \cite{Forets2} that, in an actual implementation of their method,  one  can identify Kronecker products up to commutativity of variables, i.e. $x_1x_2=x_2x_1$.}. Given a budget of $M$, one should   compare our bounds and that of \cite{Forets2} at truncation orders of $D=d(M,n)$ and $N=d(M,\tilde n)$, respectively. In the present case, $\tilde n=5$, hence $N=d(150,  5)=5$.
Regarding the maximal theoretical convergence time horizon,~\cite[Th.4.3]{Forets2} provides a static bound given by the following. The matrices  $\tilde F_1$, $\tilde F_2$ refer to the quadratic system (see \cite[Sect.4.4]{Forets2} for an  explicit   definitions).
 \begin{equation}
 	\label{TFP43}
 	T_{\max}^{\text{FP4.3}}  = \frac{1}{\|\tilde F_1\|_\infty} \log \left( 1 + \frac{\|\tilde F_1\|_\infty}{\|x_0\|_\infty  \|\tilde F_2\|_\infty} \right)=0.641.
 \end{equation}
In contrast, the maximal theoretical convergence time horizon of~\cite[Th.4.2]{Forets2},  is given by  the following, where $\gamma_{\mathrm{FP}}(t):=R(t)\|\tilde F_2\|_\infty (e^{\mu_\infty(\tilde F_1)t}-1)/\mu_\infty(\tilde F_1)$ is their geometric basis (cf. their formula\footnote{They denote $R(t)$ as  $\|x\|_t$. The condition on  $\gamma_{\mathrm{FP}}$ is implicit in their formulation.} (25)):
 \begin{equation}\label{TFP42}
 	T_{\max}^{\text{FP4.2}} = \sup \left\{ t \ge 0 : \phi(t):= t - \frac{1}{\mu_\infty(\tilde F_1)} \log \left( 1 + \frac{\mu_\infty(\tilde F_1)}{\|\tilde F_2\|_\infty R(t)} \right) < 0 \text{ and }\gamma_{\mathrm{FP}}(t)<1  \right\}.
 \end{equation}
Solving numerically, we find:
\[
T_{\max}^{\text{FP4.2}} \approx 0.561.
\]
The numerical results and the comparison are summarized in   Table \ref{tab:vdp}.

\begin{table}[t]{
	\centering	
	\label{tab:vdp}
	\vspace{2mm}
	\setlength{\tabcolsep}{5.0pt}
	\begin{tabular}{ccccc cccc}
		\toprule
		\multicolumn{3}{c}{\textbf{$T_{\max}$}}&	\multirow{2}{*}{\textbf{T}} & \multirow{2}{*}{\textbf{R}} 	 &\multicolumn{4}{c}{\textbf{$\epsilon_D$}}\\
		\cmidrule(lr){1-3} \cmidrule(lr){6-9} 
		\textbf{Cor1,2} &\textbf{FP 4.2} & \textbf{FP 4.3} &&& \textbf{Th2} &\textbf{Cor1,2} &  \textbf{FP 4.2} & \textbf{FP 4.3}\\ 
		\midrule
		
		\multirow{3}{*}{$0.562$}& \multirow{3}{*}{$0.561$} & \multirow{3}{*}{$\mathbf{0.641}$} &$0.2$&{$0.505$} & $\mathbf{6.92 \cdot 10^{-12}}$&${6.81 \cdot 10^{-11}}$ & $5.90  \cdot 10^{-6}$ &$1.38\cdot 10^{-5}$\\
		[1ex]
		
		&&&$0.4$ &$0.720$& $ \mathbf{6.31\cdot 10^{-5}}$ & $5.43\cdot 10^{-4}$ &$1.73 \cdot 10^{-2}$ & $1.43 \cdot 10^{-2}$\\
		[1ex]
		
		&&&$0.5$&$0.780$& $\mathbf{3.32 \cdot 10^{-2}} $  &  $9.52 \cdot 10^{-1} $  &$3.08 \cdot 10^{-1}$  &$3.38 \cdot 10^{-1}$\\
		
		\bottomrule
	\end{tabular}
	\caption{Van Der Pol system: maximal theoretical convergence horizon ($T_{\max}$), truncation errors bound ($\epsilon_D$), and geometric bases provided by our results (Th. \ref{thm:finite}, Cor. \ref{cor:first-tail},\ref{cor:last-tail})  and by Forets--Pouly's (FP) (Th. 4.2,4.3), across different time horizons $T$. We fix $M=150$, $D=d(M,2)$, $N=d(M,5)$. Best results in boldface.}}
\end{table}



To further analyze the   behavior of our error bound against FP's as the budget $M$  increases, we plot in Figure~\ref{fig:vdp_plot}  the error ratio between our direct bound (Theorem 1) and the FP bounds, as a function of the   budget $M$.   The plots show that the error ratio remains well below $1$ for all the analyzed time horizons and all values of $M$.  Furthermore, the error ratios decrease  as $M$ grows, confirming that our approach  scales more effectively  with $M$.

\begin{figure}[htbp]
	\centering
	\includegraphics[width=0.32\textwidth]{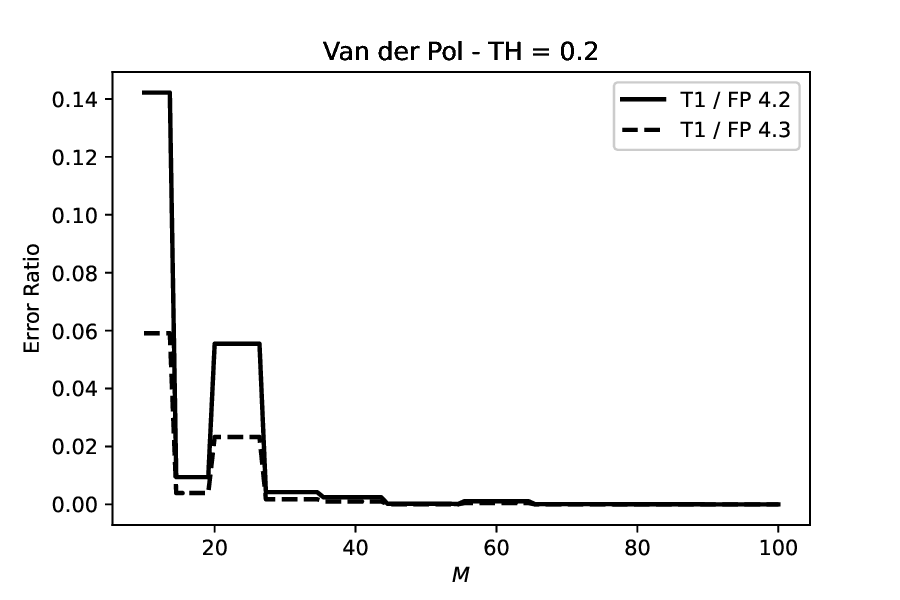}
	\includegraphics[width=0.32\textwidth]{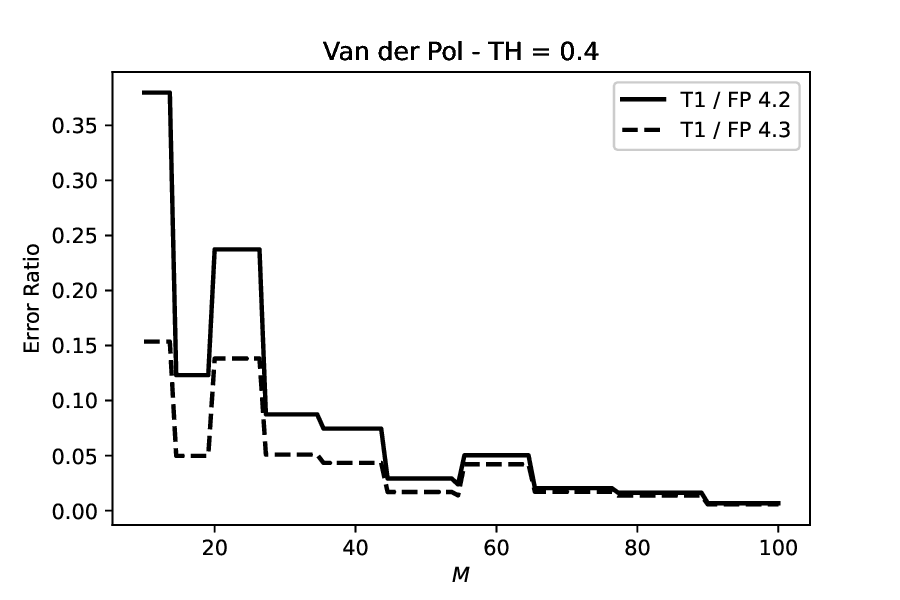}
	\includegraphics[width=0.32\textwidth]{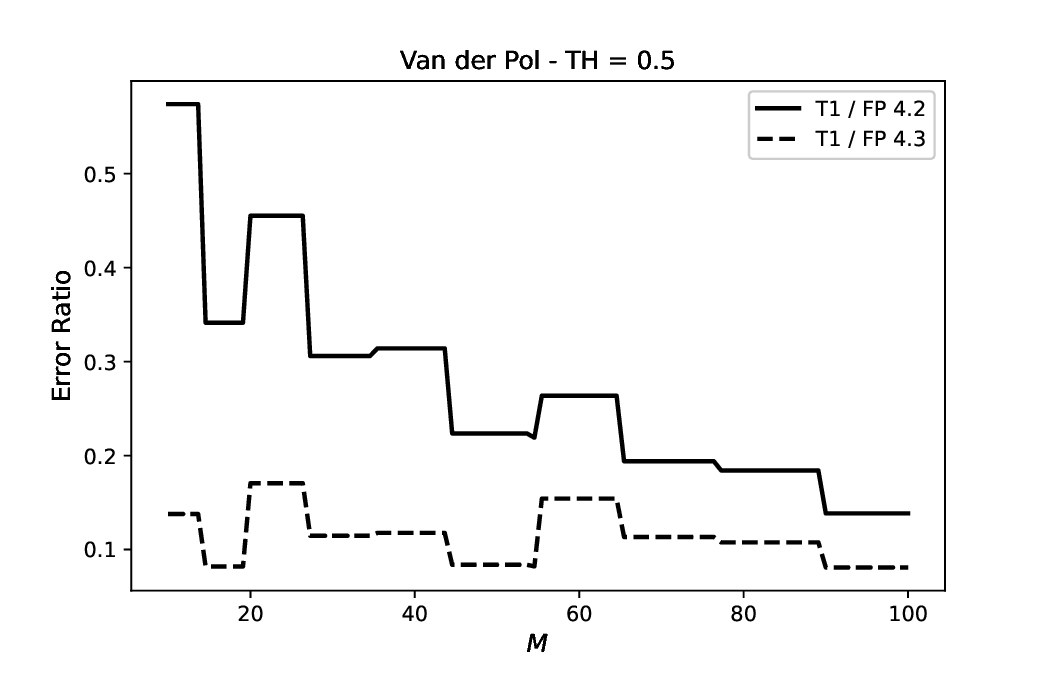}
	\caption{Van der Pol system: error ratios   (bound in Theorem~\ref{thm:finite})$/$(bound in~\cite[Th.x]{Forets2}),  for x=4.2  (solid line)  and   x=4.3 (dashed line), as a function of the   budget $M$, evaluated at $T=0.2$ (left), $T=0.4$ (center) and $T=0.5$ (right).}
	\label{fig:error_ratio_VDP}
	\label{fig:vdp_plot}
\end{figure}

\subsection{Example 2: Stuart-Landau system}
We now consider the Stuart-Landau  (SL) system
\begin{equation}\label{eq:SL}
	\begin{aligned}
		\dot x_1 &= -2x_1-x_2-x_1^3-x_1x_2^2,\\
		\dot x_2 &= x_1-2x_2-x_1^2x_2-x_2^3,
	\end{aligned}
\end{equation}
with initial condition $x_0=(0.5,0)^T$. We take as observable
$g(x)=x_1$, thus 
\(\|v\|_1=1\).
System \eqref{eq:SL} is cubic, then $p=3$, $h:=p-1=2$ and
\begin{equation}
k_D
=\max \left\{ \left\lceil\frac{D-p+1}{p-1}\right\rceil,0 \right\}
= \max \left\{\left\lceil\frac{D-2}{2}\right\rceil,0\right\}.
\end{equation}
The linear dynamics of~\eqref{eq:SL} are characterized by
\begin{equation}
F_1=
\begin{pmatrix}
	-2 & -1\\
	1 & -2
\end{pmatrix},\end{equation}
with
$\mu_1
=
\mu_\infty(F_1)
=
\max\{-2+|-1|,\,-2+|1|\}
=
-1<0$. This can be regarded, informally speaking, as a weakly contractive system, that is, contractive in its linear part.  Instead, the nonlinear part is given by
$f_{\ge2}(x)
=
	(-x_1^3-x_1x_2^2,\,
	-x_1^2x_2-x_2^3)^T$,
giving $C_2=2$.
Since, \(\mu_1<0\), the time factor is \[\Theta(T)
=
\frac{1-e^{\mu_1T}}{-\mu_1}
=
1-e^{-T}.\]

\paragraph{Maximal theoretical convergence horizon}
Following the same methodology applied to the VDP example, we find the critical time $T^*=\log(4/3)\approx 0.2876$, while formula \eqref{eq:Tmax} gives the maximal theoretical convergence   horizon
\[
T_{\max}\approx 0.693.
\]

\paragraph{Numerical illustration and comparison with Forets--Pouly}
Analogously to the VDP case, we investigate the numerical error bounds for the SL system for the three representative time horizons: $T=0.2$, $T=0.4$ and $T=0.5$. We keep the monomial budget fixed at $M=150$.
The dissipative structure of system \eqref{eq:SL} guarantees that $
\|x(t;x_0)\|_\infty \le\frac12$ for all $t\ge 0$: so we just take  $R=\frac12$ as a uniform upper bound over the considered time horizons. The results of our analysis are reported in Table \ref{tab:LS}. In terms of convergence horizons, the standard FP formulation restricts the validity of Theorem~4.3 to a fixed and highly restrictive horizon of $T_{\max} \approx 0.231$.
For this reason, the error values for FP Theorem 4.3 are omitted for $T=0.4$ and $T=0.5$, as these horizons strictly exceed its  convergence limit. On the contrary, our method remains robust over all selected time intervals.  We plot in Figure~\ref{fig:plot_sl}  the error ratio between our direct bound (Theorem 1) and the FP bounds, as a function of the   budget $M$. The error ratios remain significantly below the threshold of $1$, staying well below $0.4$ in all the considered configurations. Therefore, our approach systematically provides tighter error bounds. Furthermore, as the dimension of the truncated system increases, the curves exhibit a monotonic decay toward zero.

\begin{table}[t]{
	\centering	
	\label{tab:LS}
	\vspace{2mm}
	\setlength{\tabcolsep}{5.0pt}
	\begin{tabular}{llccc cccc}
		\toprule
		\multicolumn{3}{c}{\textbf{$T_{\max}$}} &\multirow{2}{*}{\textbf{T}}&\multirow{2}{*}{\textbf{R}} & \multicolumn{4}{c}{\textbf{$\epsilon_D$}}\\
		\cmidrule(lr){1-3} \cmidrule(lr){6-9} 
		\textbf{Cor1,2} & \textbf{FP 4.2} & \textbf{FP 4.3} 	&&& \textbf{Th2} & \textbf{Cor1,2} & \textbf{FP 4.2} & \textbf{FP 4.3}\\
		\midrule
		\multirow{3}{*}{$\mathbf{0.693}$}& \multirow{3}{*}{$\mathbf{0.693}$} & \multirow{3}{*}{$0.231$} &$0.2$&\multirow{3}{*}{$0.500$} & $\mathbf{3.15 \cdot 10^{-7}}$&${4.24 \cdot 10^{-6}}$ & $3.13  \cdot 10^{-3}$ &$2.03$\\
		[1ex]
		
		&&  &$0.4$ &&$\mathbf{2.38  \cdot 10^{-4}}$&$6.07 \cdot 10^{-3}$ &$6.23 \cdot 10^{-2}$&$-$\\
		[1ex]
		
		& & &$0.5$ &&$\mathbf{2.58\cdot 10^{-3}}$&${4.79\cdot 10^{-2}}$  &$1.51  \cdot 10^{-1}$&$-$\\
		\bottomrule
	\end{tabular}
\caption{Stuart-Landau system: maximal theoretical convergence horizon ($T_{\max}$), truncation errors bound ($\epsilon_D$), and geometric bases provided by our results (Th. \ref{thm:finite}, Cor. \ref{cor:first-tail},\ref{cor:last-tail})  and by Forets--Pouly's (FP) (Th. 4.2,4.3), across different time horizons $T$. We fix $M=150$, $D=d(M,2)$, $N=d(M,5)$. Best results in boldface.}}
\end{table}

\begin{figure}[t]
	\centering
	\includegraphics[width=0.32\textwidth]{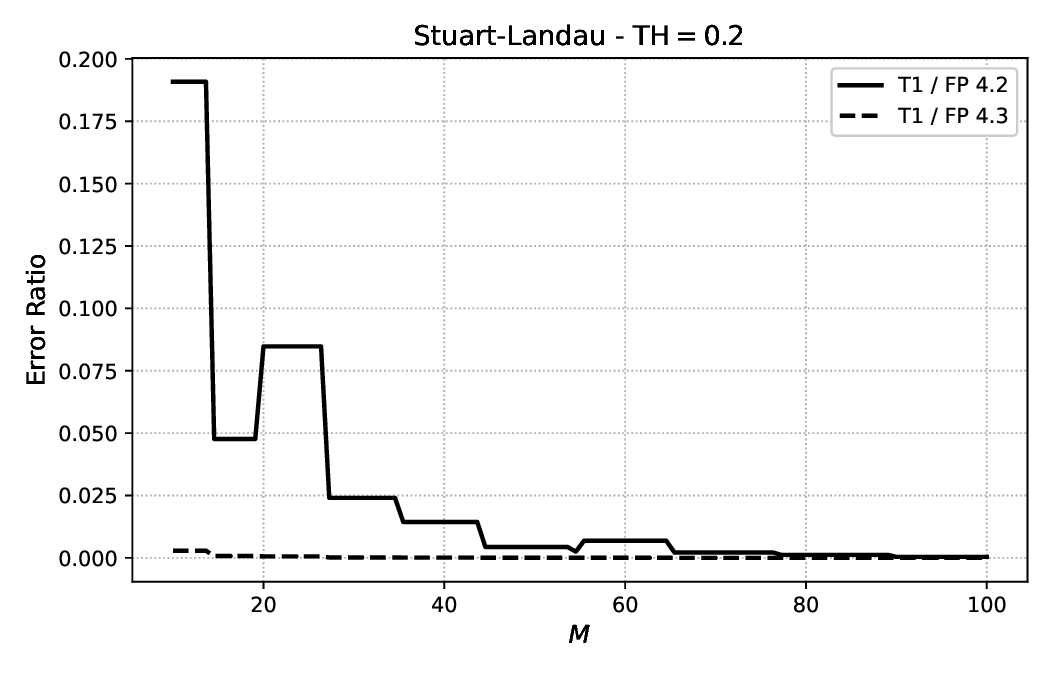}
	\includegraphics[width=0.32\textwidth]{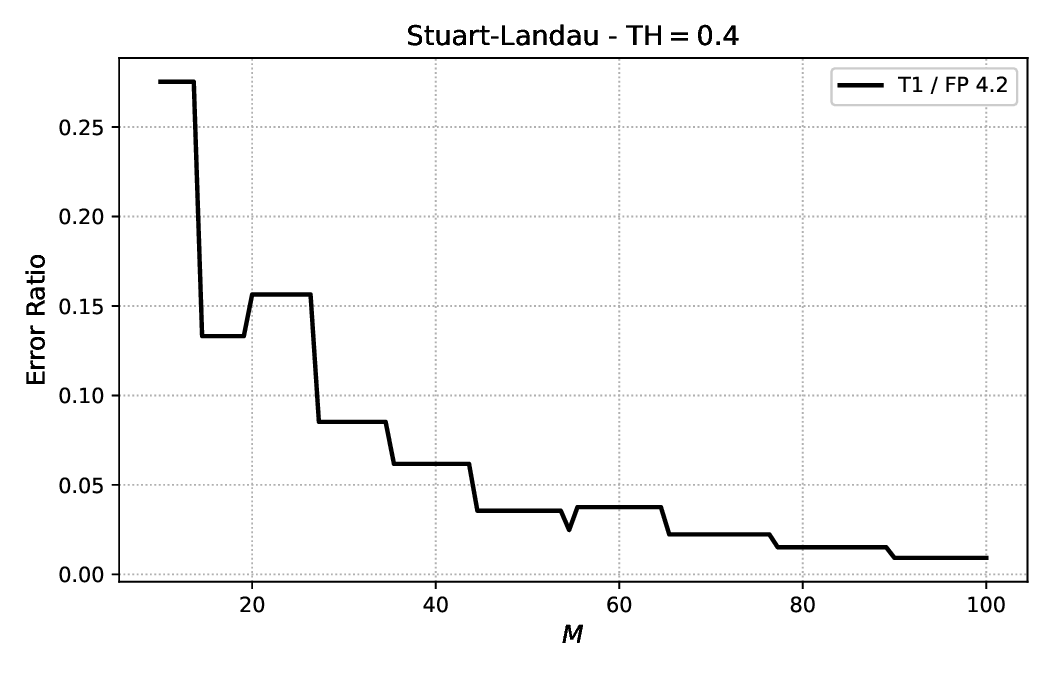}
	\includegraphics[width=0.32\textwidth]{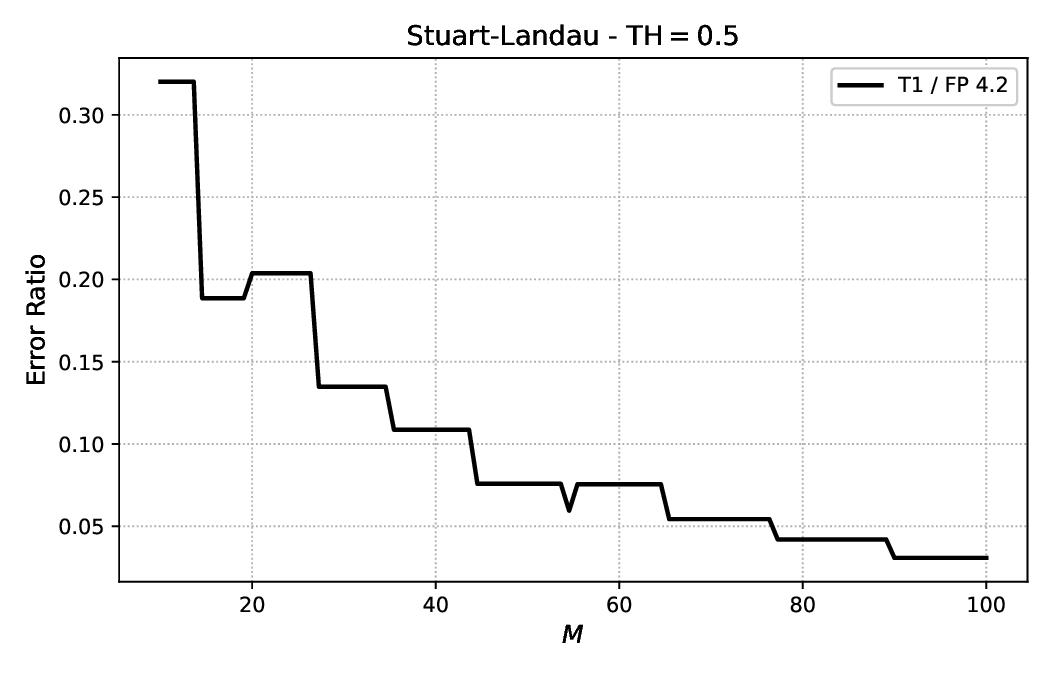}
	\caption{Stuart-Landau system:  error ratios   (bound in Theorem~\ref{thm:finite})$/$(bound in~\cite[Th.x]{Forets2}),  for x=4.2  (solid line)  and   x=4.3 (dashed line, only for $T=0.2$), as a function of the   budget $M$, evaluated at $T=0.2$ (left), $T=0.4$ (center) and $T=0.5$ (right).}
	\label{fig:plot_sl}
\end{figure}

\subsection{Discussion}


Overall, our error estimates are substantially less conservative than the
norm-based/power-series Carleman bounds of~\cite[Th.~4.3]{Forets2}, and also
than the coefficient-based bounds of
\cite{Amini,AminiZhengSunMotee2025}, which we do not report here for brevity.
The comparison with the trajectory-dependent backward estimate of
\cite[Th.~4.2]{Forets2} is more nuanced: in our experiments the admissible time
horizons are essentially comparable, because both approaches can exploit
contractivity when available. The main difference is structural. Our estimates
are derived directly in the original monomial basis, for the chosen observable,
and by tracking degree propagation explicitly. This yields geometric convergence
factors that are competitive with those of~\cite[Th.~4.2]{Forets2}, and in some
cases sharper, while avoiding a preliminary quadratic reduction.


\ifmai
\fi

\ifmai
\subsection{Example: the Stuart--Landau system}
We consider the Stuart--Landau system
\begin{equation}\label{eq:SL}
\begin{aligned}
\dot x_1 &= -2x_1-x_2-x_1^3-x_1x_2^2,\\
\dot x_2 &= x_1-2x_2-x_1^2x_2-x_2^3,
\end{aligned}
\end{equation}
with initial condition
\[
x_0=(1/2,0)^T.
\]
This is the same Stuart--Landau instance considered in the comparison section.

We take as observable
\[
g(x)=x_1.
\]
Hence \(d_g=1\) and \(\|v\|_1=1\).

The system is cubic, so
\[
p=3,\qquad h:=p-1=2.
\]

The linear part is
\[
F_1=
\begin{pmatrix}
-2 & -1\\
1 & -2
\end{pmatrix}.
\]
Therefore, in the \(\ell_\infty\)-logarithmic norm,
\[
\mu_1
=
\mu_\infty(F_1)
=
\max\{-2+|-1|,\,-2+|1|\}
=
-1.
\]
Thus this example lies in the contractive case \(\mu_1<0\).

The nonlinear part is
\[
f_{\ge2}(x)
=
\begin{pmatrix}
-x_1^3-x_1x_2^2\\
-x_1^2x_2-x_2^3
\end{pmatrix}.
\]
Each component has two cubic monomials with coefficient of absolute value \(1\). Hence
\[
C_2
=
\max_i \sum_{2\le |\gamma|\le 3}|c_{i\gamma}|
=
2.
\]

\paragraph{Trajectory bound.}
Let
\[
r^2=x_1^2+x_2^2.
\]
Then
\[
\frac{d}{dt}r^2
=
2x_1\dot x_1+2x_2\dot x_2.
\]
Substituting \eqref{eq:SL},
\[
\begin{aligned}
\frac{d}{dt}r^2
&=
2x_1(-2x_1-x_2-x_1^3-x_1x_2^2)
+
2x_2(x_1-2x_2-x_1^2x_2-x_2^3)\\
&=
-4x_1^2-2x_1x_2-2x_1^4-2x_1^2x_2^2
+
2x_1x_2-4x_2^2-2x_1^2x_2^2-2x_2^4\\
&=
-4(x_1^2+x_2^2)-2(x_1^2+x_2^2)^2\\
&=
-4r^2-2r^4
\le 0.
\end{aligned}
\]
Therefore
\[
r(t)\le r(0)=\frac12
\qquad\text{for all }t\ge0.
\]
In particular,
\[
\|x(t;x_0)\|_\infty\le \|x(t;x_0)\|_2\le\frac12.
\]
Thus we may take
\[
R=\frac12,
\qquad
R_+=1.
\]

For \(\mu_1=-1\), the refined time factor is
\[
\Theta(T)
=
\frac{1-e^{\mu_1T}}{-\mu_1}
=
1-e^{-T}.
\]
For convenience, write
\[
\theta:=\Theta(T)=1-e^{-T}.
\]

Since \(d_g=1\) and \(h=2\), the support-gap index is
\[
k_D
=
\left\lceil\frac{D-p+1}{p-1}\right\rceil\vee0
=
\left\lceil\frac{D-2}{2}\right\rceil\vee0.
\]
For \(D\ge2\), this is simply
\[
k_D=\left\lceil\frac{D-2}{2}\right\rceil.
\]

\paragraph{Explicit finite-\(D\) refined bound.}
The refined finite bound gives
\[
\epsilon_D(t)
\le
D C_2 R^{D+1}R_+^{p-2}
\sum_{j=k_D}^{D-1}
P_jC_2^j
\frac{\Theta(t)^{j+1}}{(j+1)!},
\]
where
\[
P_j=\prod_{r=0}^{j-1}(1+rh).
\]
For Stuart--Landau, \(h=2\), so
\[
P_j
=
\prod_{r=0}^{j-1}(1+2r)
=
1\cdot3\cdot5\cdots(2j-1)
=
(2j-1)!!.
\]
Moreover,
\[
C_2=2,
\qquad
R=\frac12,
\qquad
R_+=1.
\]
Hence
\[
D C_2 R^{D+1}R_+^{p-2}
=
D\cdot 2\cdot 2^{-(D+1)}
=
\frac{D}{2^D}.
\]
Also,
\[
P_jC_2^j
=
(2j-1)!!\,2^j
=
\frac{(2j)!}{j!}.
\]
Therefore
\[
P_jC_2^j\frac{\theta^{j+1}}{(j+1)!}
=
\frac{(2j)!}{j!(j+1)!}\theta^{j+1}.
\]
The coefficient
\[
\frac{(2j)!}{j!(j+1)!}
\]
is the \(j\)-th Catalan number, denoted \(\mathrm{Cat}_j\). Thus the refined finite bound becomes
\begin{equation}\label{eq:SLfinite}
\boxed{
\epsilon_D(t)
\le
\frac{D}{2^D}
\sum_{j=k_D}^{D-1}
\mathrm{Cat}_j
\bigl(1-e^{-t}\bigr)^{j+1}.
}
\end{equation}
In particular, uniformly for \(t\in[0,T]\),
\begin{equation}\label{eq:SLfiniteT}
\boxed{
\sup_{t\in[0,T]}\epsilon_D(t)
\le
\frac{D}{2^D}
\sum_{j=k_D}^{D-1}
\mathrm{Cat}_j
\theta^{j+1},
\qquad
\theta:=1-e^{-T}.
}
\end{equation}

\paragraph{First-tail regime.}
The first geometric corollary uses
\[
\rho_T
=
hC_2\Theta(T)
=
2\cdot2\theta
=
4\theta.
\]
The first-tail condition is
\[
\rho_T<1,
\]
i.e.
\[
4(1-e^{-T})<1.
\]
Equivalently,
\[
1-e^{-T}<\frac14,
\]
so
\[
e^{-T}>\frac34,
\]
and hence
\[
\boxed{
T<\log\frac43\approx0.287682.
}
\]

In this regime,
\[
\gamma_1
=
hC_2R^h\Theta(T)
=
2\cdot2\cdot\left(\frac12\right)^2\theta
=
\theta.
\]
The constant is
\[
K_1
=
\frac{hC_2\Theta(T)R^2R_+^{2h-2}}{1-\rho_T}
=
\frac{4\theta\cdot(1/4)}{1-4\theta}
=
\frac{\theta}{1-4\theta}.
\]
Thus, for
\[
T<\log\frac43,
\]
we have
\begin{equation}\label{eq:SLfirst}
\boxed{
\sup_{t\in[0,T]}\epsilon_D(t)
\le
\frac{\theta}{1-4\theta}\,\theta^{k_D},
\qquad
\theta=1-e^{-T}.
}
\end{equation}
The first-tail geometric factor is therefore
\[
\boxed{
\gamma_1 = 1-e^{-T}.
}
\]

\paragraph{Last-tail regime.}
The second geometric corollary applies when
\[
\rho_T>1
\qquad\text{and}\qquad
R\rho_T<1.
\]
For Stuart--Landau,
\[
\rho_T=4\theta,
\qquad
R\rho_T=\frac12\cdot4\theta=2\theta.
\]
Thus the conditions are
\[
4\theta>1
\qquad\text{and}\qquad
2\theta<1.
\]
Equivalently,
\[
\frac14<\theta<\frac12.
\]
Since \(\theta=1-e^{-T}\), this gives
\[
\boxed{
\log\frac43<T<\log2.
}
\]

In this regime,
\[
\gamma_2
=
(R\rho_T)^h
=
(2\theta)^2
=
4\theta^2.
\]
The constant is
\[
K_2
=
\frac{hC_2\Theta(T)R R_+^{h-1}}{\rho_T-1}
=
\frac{4\theta\cdot(1/2)}{4\theta-1}
=
\frac{2\theta}{4\theta-1}.
\]
Therefore, for
\[
\log\frac43<T<\log2,
\]
we obtain
\begin{equation}\label{eq:SLlast}
\boxed{
\sup_{t\in[0,T]}\epsilon_D(t)
\le
\frac{2\theta}{4\theta-1}
\bigl(4\theta^2\bigr)^{k_D},
\qquad
\theta=1-e^{-T}.
}
\end{equation}
The last-tail geometric factor is
\[
\boxed{
\gamma_2
=
4(1-e^{-T})^2.
}
\]

\paragraph{Admissible horizon.}
Combining the two regimes, the refined bound proves convergence for every
\[
\theta<\frac12.
\]
Equivalently,
\[
1-e^{-T}<\frac12,
\]
that is
\[
\boxed{
T<\log2\approx0.693147.
}
\]
At \(T=\log2\), one has \(\theta=1/2\), and the last-tail factor becomes
\[
\gamma_2=4(1/2)^2=1,
\]
so the geometric estimate no longer proves convergence. For \(T>\log2\), the refined geometric estimate does not yield convergence.

The two closed-form corollaries have a singularity at the transition point
\[
\theta=\frac14,
\qquad
T=\log\frac43,
\]
because the corresponding geometric sums are estimated from opposite sides. This singularity is an artifact of the two simplified geometric estimates. The finite bound \eqref{eq:SLfiniteT} itself remains valid at \(T=\log(4/3)\).

\paragraph{Numerical illustration at \(T=0.2\).}
Let
\[
T=0.2.
\]
Then
\[
\theta=1-e^{-0.2}\approx0.181269.
\]
Since
\[
4\theta\approx0.725077<1,
\]
we are in the first-tail regime. The geometric factor is
\[
\gamma_1=\theta\approx0.181269.
\]
The constant is
\[
K_1
=
\frac{\theta}{1-4\theta}
\approx
0.659345.
\]

For example, with \(D=30\),
\[
k_D=\left\lceil\frac{30-2}{2}\right\rceil=14.
\]
The geometric corollary gives
\[
\sup_{t\in[0,0.2]}\epsilon_D(t)
\le
0.659345\cdot(0.181269)^{14}
\approx
2.73\times10^{-11}.
\]
The finite Catalan bound \eqref{eq:SLfiniteT} gives the sharper value
\[
\sup_{t\in[0,0.2]}\epsilon_D(t)
\le
\frac{30}{2^{30}}
\sum_{j=14}^{29}
\mathrm{Cat}_j(0.181269)^{j+1}
\approx
1.66\times10^{-12}.
\]

\paragraph{Numerical illustration at \(T=0.5\).}
Let
\[
T=0.5.
\]
Then
\[
\theta=1-e^{-0.5}\approx0.393469.
\]
Now
\[
4\theta\approx1.57388>1,
\qquad
2\theta\approx0.786939<1.
\]
Thus \(T=0.5\) lies in the last-tail regime. The geometric factor is
\[
\gamma_2
=
4\theta^2
\approx
0.619272.
\]
The constant is
\[
K_2
=
\frac{2\theta}{4\theta-1}
\approx
1.37127.
\]
For \(D=30\), again \(k_D=14\), and
\[
\sup_{t\in[0,0.5]}\epsilon_D(t)
\le
1.37127\cdot(0.619272)^{14}
\approx
1.67\times10^{-3}.
\]
The finite Catalan bound gives the sharper value
\[
\sup_{t\in[0,0.5]}\epsilon_D(t)
\le
\frac{30}{2^{30}}
\sum_{j=14}^{29}
\mathrm{Cat}_j(0.393469)^{j+1}
\approx
5.99\times10^{-5}.
\]

\paragraph{Comparison with the FP backward-integration bound.}
For comparison, FP Theorem 4.2 is applied after reducing the cubic Stuart--Landau system to a quadratic one. FP's theorem requires an a priori bound \(\alpha\) on the lifted trajectory and gives the bound labelled \(E_1(t)\) in their Theorem 4.2. :contentReference[oaicite:1]{index=1}

For Stuart--Landau, the quadratic lift satisfies
\[
\alpha=\frac12,
\qquad
\|\widetilde F_2\|_\infty=4,
\qquad
\mu_\infty(\widetilde F_1)=-1.
\]
Hence FP Theorem 4.2 gives
\[
E_1(t)
=
\frac12\left(2(1-e^{-t})\right)^N.
\]
Thus the FP backward-integration geometric factor is
\[
\boxed{
\alpha_{\mathrm{FP}}(t)=2(1-e^{-t})=2\theta.
}
\]
It proves convergence for
\[
2\theta<1,
\]
i.e.
\[
T<\log2.
\]
Thus the refined direct bound and FP Theorem 4.2 have the same admissible endpoint
\[
T=\log2.
\]
However, the refined direct factors are
\[
\gamma_1=\theta
\qquad
\text{for }\theta<1/4,
\]
and
\[
\gamma_2=4\theta^2=(2\theta)^2
\qquad
\text{for }1/4<\theta<1/2.
\]
Both are smaller than \(2\theta\) in their respective regimes. The interpretation must still account for the different truncation indices: here \(k_D\sim D/2\), whereas FP's \(N\) is the truncation order after quadratic lifting.

\subsection{Example: Van der Pol system}
We consider the Van der Pol system in the polynomial form
\begin{equation}\label{eq:VDP}
\begin{aligned}
\dot x_1 &= -\frac{x_1^3}{3}+x_1-x_2,\\
\dot x_2 &= x_1,
\end{aligned}
\end{equation}
with initial condition
\[
x_0=(1/2,0)^T.
\]
We take as observable
\[
g(x)=x_1.
\]
Thus \(d_g=1\) and \(\|v\|_1=1\).

The system is cubic, hence
\[
p=3,\qquad h:=p-1=2.
\]

The linear part is
\[
F_1=
\begin{pmatrix}
1 & -1\\
1 & 0
\end{pmatrix}.
\]
Therefore, in the \(\ell_\infty\)-logarithmic norm,
\[
\mu_1
=
\mu_\infty(F_1)
=
\max\{1+|-1|,\;0+|1|\}
=
2.
\]
Thus this example lies in the noncontractive case \(\mu_1>0\).

The nonlinear part is
\[
f_{\ge2}(x)
=
\begin{pmatrix}
-\frac{x_1^3}{3}\\
0
\end{pmatrix}.
\]
Hence
\[
C_2
=
\max_i\sum_{2\le |\gamma|\le 3}|c_{i\gamma}|
=
\frac13.
\]

Since \(\mu_1>0\), the sign-adapted time factor in the unified theorem is
\[
\Theta(T)
=
\frac{e^{h\mu_1T}-1}{h\mu_1}
=
\frac{e^{4T}-1}{4}.
\]
For convenience, write
\[
\theta:=\Theta(T)=\frac{e^{4T}-1}{4}.
\]

Since \(d_g=1\) and \(h=2\), the support-gap index is
\[
k_D
=
\left\lceil\frac{D-p+1}{p-1}\right\rceil\vee0
=
\left\lceil\frac{D-2}{2}\right\rceil\vee0.
\]
For \(D\ge2\),
\[
k_D=\left\lceil\frac{D-2}{2}\right\rceil.
\]

\paragraph{Trajectory bound.}
In contrast with the Stuart--Landau example, the Van der Pol system does not provide the same immediate monotone radial bound. We therefore use an a priori bound
\[
R\ge \sup_{t\in[0,T]}\|x(t;x_0)\|_\infty.
\]
For the numerical values below, one may use either a validated numerical enclosure or a simple analytic comparison estimate.

A convenient analytic estimate on short time intervals is obtained as follows. While \(x_2(t)\ge0\), one has
\[
\dot x_1
=
x_1-\frac{x_1^3}{3}-x_2
\le
x_1-\frac{x_1^3}{3}.
\]
Let \(y\) solve
\[
\dot y=y-\frac{y^3}{3},
\qquad
y(0)=\frac12.
\]
Then
\[
y(t)=\sqrt{\frac{3}{1+11e^{-2t}}},
\]
and scalar comparison gives
\[
x_1(t)\le y(t)
\]
on the interval under consideration. Moreover,
\[
x_2(t)=\int_0^t x_1(s)\,ds
\le
\int_0^t y(s)\,ds,
\]
with
\[
\int_0^t y(s)\,ds
=
\sqrt3
\left[
\operatorname{arsinh}\!\left(\frac{e^t}{\sqrt{11}}\right)
-
\operatorname{arsinh}\!\left(\frac{1}{\sqrt{11}}\right)
\right].
\]
For example, this yields the conservative analytic choices
\[
R=0.72\quad\text{on }[0,0.4],
\qquad
R=0.78\quad\text{on }[0,0.5].
\]
At \(T=0.2\), direct numerical integration gives the sharper value \(R\approx0.505\).

\paragraph{Explicit finite-\(D\) refined bound.}
The refined finite bound gives
\[
\epsilon_D(t)
\le
D C_2 R^{D+1}R_+^{p-2}
\sum_{j=k_D}^{D-1}
P_j C_2^j
\frac{\Theta(t)^{j+1}}{(j+1)!},
\]
where
\[
P_j=\prod_{r=0}^{j-1}(1+rh).
\]
For Van der Pol, \(h=2\), so
\[
P_j
=
\prod_{r=0}^{j-1}(1+2r)
=
1\cdot3\cdot5\cdots(2j-1)
=
(2j-1)!!.
\]
Moreover,
\[
C_2=\frac13.
\]
Hence
\[
P_j C_2^j
=
(2j-1)!!\,3^{-j}
=
\frac{(2j)!}{2^j j!}\,3^{-j}
=
\frac{(2j)!}{6^j j!}.
\]
Therefore
\[
P_j C_2^j\frac{\theta^{j+1}}{(j+1)!}
=
\frac{(2j)!}{j!(j+1)!}
\frac{\theta^{j+1}}{6^j}.
\]
The coefficient
\[
\frac{(2j)!}{j!(j+1)!}
\]
is the \(j\)-th Catalan number, denoted \(\mathrm{Cat}_j\). Thus the finite bound becomes
\begin{equation}\label{eq:VDPfinite}
\boxed{
\epsilon_D(t)
\le
\frac{D}{3}R^{D+1}R_+
\sum_{j=k_D}^{D-1}
\mathrm{Cat}_j
\frac{\Theta(t)^{j+1}}{6^j}.
}
\end{equation}
Equivalently,
\[
\epsilon_D(t)
\le
\frac{D}{3}R^{D+1}R_+
\Theta(t)
\sum_{j=k_D}^{D-1}
\mathrm{Cat}_j
\left(\frac{\Theta(t)}6\right)^j.
\]
In particular, uniformly for \(t\in[0,T]\),
\begin{equation}\label{eq:VDPfiniteT}
\boxed{
\sup_{t\in[0,T]}\epsilon_D(t)
\le
\frac{D}{3}R^{D+1}R_+
\theta
\sum_{j=k_D}^{D-1}
\mathrm{Cat}_j
\left(\frac{\theta}{6}\right)^j,
\qquad
\theta=\frac{e^{4T}-1}{4}.
}
\end{equation}

\paragraph{First-tail regime.}
The first geometric corollary uses
\[
\rho_T
=
hC_2\Theta(T)
=
2\cdot\frac13\theta
=
\frac{2\theta}{3}.
\]
The first-tail condition is
\[
\rho_T<1,
\]
that is
\[
\frac{2\theta}{3}<1.
\]
Since
\[
\theta=\frac{e^{4T}-1}{4},
\]
this condition is equivalent to
\[
\frac{e^{4T}-1}{6}<1,
\]
hence
\[
e^{4T}<7.
\]
Therefore
\[
\boxed{
T<\frac14\log 7\approx0.486477.
}
\]

In this regime,
\[
\gamma_1
=
hC_2R^h\Theta(T)
=
2\cdot\frac13 R^2\theta
=
\frac{2R^2\theta}{3}.
\]
Equivalently,
\[
\boxed{
\gamma_1
=
\frac{R^2}{6}\bigl(e^{4T}-1\bigr).
}
\]
The constant is
\[
K_1
=
\frac{hC_2\Theta(T)R^2R_+^{2h-2}}{1-\rho_T}
=
\frac{\frac23\theta R^2R_+^2}{1-\frac23\theta}.
\]
Thus, for
\[
T<\frac14\log 7,
\]
we have
\begin{equation}\label{eq:VDPfirst}
\boxed{
\sup_{t\in[0,T]}\epsilon_D(t)
\le
\frac{\frac23\theta R^2R_+^2}{1-\frac23\theta}
\left(\frac{2R^2\theta}{3}\right)^{k_D},
\qquad
\theta=\frac{e^{4T}-1}{4}.
}
\end{equation}

\paragraph{Last-tail regime.}
The second geometric corollary applies when
\[
\rho_T>1
\qquad\text{and}\qquad
R\rho_T<1.
\]
Here
\[
\rho_T=\frac{2\theta}{3}
=
\frac{e^{4T}-1}{6}.
\]
Thus the two conditions are
\[
\frac{e^{4T}-1}{6}>1
\]
and
\[
R\frac{e^{4T}-1}{6}<1.
\]
Equivalently,
\[
T>\frac14\log 7
\]
and
\[
T<\frac14\log\left(1+\frac6R\right).
\]
In this regime,
\[
\gamma_2
=
(R\rho_T)^h
=
\left(R\frac{e^{4T}-1}{6}\right)^2.
\]
The constant is
\[
K_2
=
\frac{hC_2\Theta(T)R R_+^{h-1}}{\rho_T-1}
=
\frac{\frac23\theta R R_+}{\frac23\theta-1}.
\]
Therefore, whenever
\[
\frac14\log 7<T<\frac14\log\left(1+\frac6R\right),
\]
we obtain
\begin{equation}\label{eq:VDPlast}
\boxed{
\sup_{t\in[0,T]}\epsilon_D(t)
\le
\frac{\frac23\theta R R_+}{\frac23\theta-1}
\left(
R\frac{e^{4T}-1}{6}
\right)^{2k_D},
\qquad
\theta=\frac{e^{4T}-1}{4}.
}
\end{equation}
The last-tail geometric factor is
\[
\boxed{
\gamma_2
=
\left(
R\frac{e^{4T}-1}{6}
\right)^2.
}
\]

\paragraph{Admissible horizon.}
For a fixed a priori trajectory bound \(R<1\), the two regimes together prove convergence whenever
\[
R\rho_T<1,
\]
that is
\[
R\frac{e^{4T}-1}{6}<1.
\]
Equivalently,
\[
\boxed{
T<\frac14\log\left(1+\frac6R\right).
}
\]
The transition between the two simplified geometric bounds occurs at
\[
\rho_T=1,
\]
that is
\[
T=\frac14\log7.
\]
As in the Stuart--Landau example, the simplified constants in the first-tail and last-tail estimates become singular at the transition point. This singularity is an artifact of the two geometric estimates. The finite bound \eqref{eq:VDPfiniteT} remains valid at the transition.

\paragraph{Numerical illustration at \(T=0.2\).}
Let
\[
T=0.2.
\]
Then
\[
\theta=\frac{e^{0.8}-1}{4}\approx0.306385,
\]
and
\[
\rho_T=\frac{2\theta}{3}\approx0.204257<1.
\]
Using the numerical trajectory bound from the draft,
\[
R=0.505,
\]
we are in the first-tail regime. The geometric factor is
\[
\gamma_1
=
\frac{2R^2\theta}{3}
\approx0.05209.
\]
The constant is
\[
K_1
=
\frac{\frac23\theta R^2}{1-\frac23\theta}
\approx0.06546.
\]

For \(D=30\),
\[
k_D=\left\lceil\frac{30-2}{2}\right\rceil=14.
\]
The geometric corollary gives
\[
\sup_{t\in[0,0.2]}\epsilon_D(t)
\le
0.06546\cdot(0.05209)^{14}
\approx
7.09\times10^{-20}.
\]
The finite Catalan bound \eqref{eq:VDPfiniteT} gives the sharper value
\[
\sup_{t\in[0,0.2]}\epsilon_D(t)
\le
\frac{30}{3}(0.505)^{31}
\sum_{j=14}^{29}
\mathrm{Cat}_j
\frac{(0.306385)^{j+1}}{6^j}
\approx
5.23\times10^{-21}.
\]

\paragraph{Numerical illustration at \(T=0.4\).}
Let
\[
T=0.4.
\]
Then
\[
\theta=\frac{e^{1.6}-1}{4}\approx0.988258,
\]
and
\[
\rho_T=\frac{2\theta}{3}\approx0.658839<1.
\]
Using the analytic bound
\[
R=0.72,
\]
we remain in the first-tail regime. The geometric factor is
\[
\gamma_1
=
\frac{2R^2\theta}{3}
\approx0.34154.
\]
The constant is
\[
K_1
=
\frac{\frac23\theta R^2}{1-\frac23\theta}
\approx1.00112.
\]
For \(D=30\), \(k_D=14\), hence
\[
\sup_{t\in[0,0.4]}\epsilon_D(t)
\le
1.00112\cdot(0.34154)^{14}
\approx
2.94\times10^{-7}.
\]
The finite Catalan bound gives
\[
\sup_{t\in[0,0.4]}\epsilon_D(t)
\le
\frac{30}{3}(0.72)^{31}
\sum_{j=14}^{29}
\mathrm{Cat}_j
\frac{(0.988258)^{j+1}}{6^j}
\approx
2.71\times10^{-8}.
\]

\paragraph{Numerical illustration at \(T=0.5\).}
Let
\[
T=0.5.
\]
Then
\[
\theta=\frac{e^2-1}{4}\approx1.597264,
\]
and
\[
\rho_T=\frac{2\theta}{3}\approx1.064843>1.
\]
Using the analytic bound
\[
R=0.78,
\]
we have
\[
R\rho_T\approx0.830577<1.
\]
Thus \(T=0.5\) lies in the last-tail regime. The last-tail geometric factor is
\[
\gamma_2
=
(R\rho_T)^2
\approx0.68986.
\]
The constant is
\[
K_2
=
\frac{\frac23\theta R}{\frac23\theta-1}
\approx12.8091.
\]
For \(D=30\), \(k_D=14\), and hence
\[
\sup_{t\in[0,0.5]}\epsilon_D(t)
\le
12.8091\cdot(0.68986)^{14}
\approx
7.08\times10^{-2}.
\]
The finite Catalan bound is substantially sharper:
\[
\sup_{t\in[0,0.5]}\epsilon_D(t)
\le
\frac{30}{3}(0.78)^{31}
\sum_{j=14}^{29}
\mathrm{Cat}_j
\frac{(1.597264)^{j+1}}{6^j}
\approx
2.48\times10^{-3}.
\]

\paragraph{Comparison with the FP backward-integration bound.}
For comparison, FP Theorem 4.2 can be applied after reducing the cubic Van der Pol system to a quadratic one. The lifted quadratic system has
\[
\mu_\infty(\widetilde F_1)=4,
\qquad
\|\widetilde F_2\|_\infty=\frac23.
\]
If \(R<1\), then the lifted trajectory satisfies
\[
\|\widetilde x(t)\|_\infty
=
\max\{\|x(t)\|_\infty,\|x(t)\|_\infty^2\}
\le R.
\]
Thus FP Theorem 4.2 gives the geometric factor
\[
\alpha_{\mathrm{FP}}(T)
=
\frac{R\|\widetilde F_2\|_\infty}{\mu_\infty(\widetilde F_1)}
\left(e^{\mu_\infty(\widetilde F_1)T}-1\right)
=
\frac{R}{6}\bigl(e^{4T}-1\bigr).
\]
Equivalently,
\[
\alpha_{\mathrm{FP}}(T)=R\rho_T.
\]
Therefore, in the first-tail regime,
\[
\gamma_1
=
R^2\rho_T
=
R\,\alpha_{\mathrm{FP}}(T),
\]
while in the last-tail regime,
\[
\gamma_2
=
(R\rho_T)^2
=
\alpha_{\mathrm{FP}}(T)^2.
\]
Thus, when \(R<1\), the refined direct geometric factors are smaller than the FP backward-integration factor. As in the Stuart--Landau example, the interpretation must still account for the different truncation indices: the direct estimate uses \(k_D\sim D/2\), whereas the FP estimate is expressed in terms of the truncation order of the lifted quadratic system.
\fi

\section{Proofs}\label{sec:proofs}
Throughout this section we will assume the notation and set up introduced in subsection \ref{sub:setup}.

\subsection{Elementary degree and norm estimates}
\ifmai
For a row vector \(r\in\R^{1\times M}\), define
\[
  \deg_{\max}(r):=\max\{\deg(\alpha_j):r_j\ne0\},
\]
with the convention \(\deg_{\max}(0)=-\infty\), and
\[
  \deg_{\min}(r):=\min\{\deg(\alpha_j):r_j\ne0\},
\]
with the convention \(\deg_{\min}(0)=+\infty\).

\begin{lemma}[Degree preservation and degree-local bounds for \(A_1\)]
\label{lem:degree-local-a1}
Let \(\mathcal H_m\) denote the subspace spanned by monomials of total degree
\(m\). The matrix \(A_1\) is block diagonal with respect to the decomposition
\[
  \R^M=\mathcal H_1\oplus\cdots\oplus \mathcal H_D .
\]
Consequently, for every \(\theta\ge0\), also \(e^{A_1\theta}\) is block
diagonal with respect to the same decomposition. In particular, if a row vector
\(r\) is supported only on degrees belonging to a set \(I\subseteq\{1,\ldots,D\}\),
then \(r e^{A_1\theta}\) is supported only on the same set of degrees.

Moreover, if \(r\) is supported on degrees at least \(a\), then, for
\(\mu_1<0\),
\[
  \|r e^{A_1\theta}\|_1
  \le
  e^{a\mu_1\theta}\|r\|_1.
\]
If \(r\) is supported on degrees at most \(b\), then, for \(\mu_1\ge0\),
\[
  \|r e^{A_1\theta}\|_1
  \le
  e^{b\mu_1\theta}\|r\|_1.
\]
\end{lemma}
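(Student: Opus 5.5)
The plan is to identify \(A_1\) with the matrix of the Lie derivative along the linear field \(F_1x\), read as the linear part of \(A\) in the Dyson splitting. Then I would (i) show that this operator maps each homogeneous space \(\mathcal H_m\) into itself, and (ii) bound the \(\ell^\infty\)-logarithmic norm of each diagonal block by \(m\mu_1\). The row-vector estimates then follow blockwise.

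For block diagonality, take a monomial \(\alpha_i=x^\beta\) with \(|\beta|=m\le D\). We have
\[
  \Lie_{F_1x}(x^\beta)=\sum_{k=1}^n \beta_k x^{\beta-e_k}\sum_{l=1}^n (F_1)_{kl}x_l
  =\sum_{k,l}\beta_k (F_1)_{kl}\,x^{\beta-e_k+e_l},
\]
where terms with \(\beta_k=0\) are absent. Every monomial on the right has total degree \(m\le D\), so it is among the retained basis elements. Hence row \(i\) of \(A_1\) is supported on degree \(m\) only, i.e.\ \(A_1=\mathrm{diag}(A_1^{(1)},\ldots,A_1^{(D)})\) with respect to \(\mathcal H_1\oplus\cdots\oplus\mathcal H_D\). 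Since powers of a block-diagonal matrix are block diagonal, \(e^{A_1\theta}=\mathrm{diag}(e^{A_1^{(1)}\theta},\ldots,e^{A_1^{(D)}\theta})\). In particular, for a row vector \(r\) the degree-\(m\) component of \(re^{A_1\theta}\) is \(r_me^{A_1^{(m)}\theta}\), where \(r_m\) is the degree-\(m\) part of \(r\). This gives the support preservation claim.

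For the norm estimates, first compute \(\mu_\infty(A_1^{(m)})\). In the expansion above, \(x^{\beta-e_k+e_l}=x^\beta\) holds iff \(l=k\). So the diagonal entry of row \(i\) is \(\sum_k\beta_k(F_1)_{kk}\). By the triangle inequality (several pairs \((k,l)\) may give the same monomial, which only helps), the off-diagonal absolute row sum is at most \(\sum_k\beta_k\sum_{l\ne k}|(F_1)_{kl}|\). Therefore
\[
  (A_1)_{ii}+\sum_{j\ne i}|(A_1)_{ij}|\le\sum_k\beta_k\Big((F_1)_{kk}+\sum_{l\ne k}|(F_1)_{kl}|\Big)\le |\beta|\,\mu_1=m\mu_1 ,
\]
so \(\mu_\infty(A_1^{(m)})\le m\mu_1\).

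Next I would invoke the standard fact \(\|e^{U\theta}\|_\infty\le e^{\mu_\infty(U)\theta}\) for \(\theta\ge0\). It follows from \(\|I+\delta U\|_\infty=1+\delta\mu_\infty(U)\) for small \(\delta>0\) together with the product formula \(e^{U\theta}=\lim_N(I+\tfrac{\theta}{N}U)^N\). For row vectors, \(\|qM\|_1\le\|q\|_1\|M\|_\infty\), where \(\|M\|_\infty\) is the maximum absolute row sum. Hence \(\|r_me^{A_1^{(m)}\theta}\|_1\le e^{m\mu_1\theta}\|r_m\|_1\).

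Finally, the blocks have disjoint supports, so \(\ell^1\) norms add:
\[
  \|re^{A_1\theta}\|_1=\sum_m\|r_me^{A_1^{(m)}\theta}\|_1\le\sum_m e^{m\mu_1\theta}\|r_m\|_1 .
\]
If \(\mu_1<0\) and \(r_m=0\) for \(m<a\), then \(e^{m\mu_1\theta}\le e^{a\mu_1\theta}\) for every \(m\) that occurs. If \(\mu_1\ge0\) and \(r_m=0\) for \(m>b\), then \(e^{m\mu_1\theta}\le e^{b\mu_1\theta}\). Either way the sum is bounded by the claimed factor times \(\|r\|_1\).

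The main obstacle I expect is the logarithmic-norm computation for the lifted blocks. One must handle coincident monomials, and check that the diagonal contributions exactly reproduce \(\sum_k\beta_k(F_1)_{kk}\) while all cross terms land off the diagonal. Everything else is routine block-matrix bookkeeping.
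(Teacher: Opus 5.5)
Your proposal is correct and follows essentially the same route as the paper. The shared steps are: block-diagonality from the degree-preserving Lie derivative along $F_1x$; the row-wise bound $\mu_\infty(A_{1,m})\le m\mu_1$, obtained from the diagonal entry $\sum_k\beta_k(F_1)_{kk}$ plus the off-diagonal row sum; the estimate $\|e^{A_{1,m}\theta}\|_\infty\le e^{m\mu_1\theta}$; and blockwise summation using $\|qU\|_1\le\|q\|_1\|U\|_\infty$. You add two small things the paper does not: a justification of the standard logarithmic-norm exponential bound via the product formula, which the paper simply asserts, and the observation that the $\ell^1$ norms of the disjointly supported blocks add exactly, where the paper uses the triangle inequality.
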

\begin{proof}
Let \(x^\alpha\) be a monomial of total degree \(m=|\alpha|\). Since
\(A_1\) is induced by the linear vector field \(F_1x\),
\[
  \mathcal L_{F_1x}(x^\alpha)
  =
  \sum_{i=1}^n \alpha_i x^{\alpha-e_i}(F_1x)_i
  =
  \sum_{i,\ell=1}^n \alpha_i (F_1)_{i\ell}x^{\alpha-e_i+e_\ell}.
\]
Every nonzero monomial \(x^{\alpha-e_i+e_\ell}\) appearing in this expression
has total degree
\[
  |\alpha-e_i+e_\ell|=|\alpha|=m.
\]
Thus \(A_1\) does not mix homogeneous degrees. Equivalently, after ordering
monomials by total degree,
\[
  A_1=\operatorname{diag}(A_{1,1},A_{1,2},\ldots,A_{1,D}),
\]
where \(A_{1,m}\) is the block acting on \(\mathcal H_m\). Hence
\[
  e^{A_1\theta}
  =
  \operatorname{diag}\bigl(
  e^{A_{1,1}\theta},
  e^{A_{1,2}\theta},
  \ldots,
  e^{A_{1,D}\theta}
  \bigr),
\]
so \(e^{A_1\theta}\) preserves the same degree blocks.

For the norm estimates, decompose a row vector \(r\) as
\[
  r=\sum_{m=1}^D r_m,
\]
where \(r_m\) is supported on degree \(m\). Then
\[
  r e^{A_1\theta}
  =
  \sum_{m=1}^D r_m e^{A_{1,m}\theta}.
\]
The logarithmic-norm estimate for the degree-\(m\) block gives
\[
  \mu_\infty(A_{1,m})\le m\mu_1,
\]
and hence
\[
  \|e^{A_{1,m}\theta}\|_\infty
  \le
  e^{m\mu_1\theta}.
\]
Using
\[
  \|qM\|_1\le \|q\|_1\|M\|_\infty
\]
for row vectors, we obtain
\[
  \|r_m e^{A_{1,m}\theta}\|_1
  \le
  \|r_m\|_1 e^{m\mu_1\theta}.
\]

If \(\mu_1<0\) and \(r\) is supported on degrees at least \(a\), then
\(m\ge a\) on every nonzero block, so
\[
  e^{m\mu_1\theta}\le e^{a\mu_1\theta}.
\]
Therefore
\[
  \|r e^{A_1\theta}\|_1
  \le
  \sum_{m\ge a}\|r_m\|_1 e^{m\mu_1\theta}
  \le
  e^{a\mu_1\theta}\sum_{m\ge a}\|r_m\|_1
  =
  e^{a\mu_1\theta}\|r\|_1.
\]

If \(\mu_1\ge0\) and \(r\) is supported on degrees at most \(b\), then
\(m\le b\) on every nonzero block, so
\[
  e^{m\mu_1\theta}\le e^{b\mu_1\theta}.
\]
Thus
\[
  \|r e^{A_1\theta}\|_1
  \le
  e^{b\mu_1\theta}\|r\|_1.
\]
\end{proof}
\fi
\begin{lemma}[Degree structure and row bounds]
\label{lem:degree-structure}
The following statements hold.
\begin{enumerate}[label=\textup{(\roman*)}]
\item Both  \(A_1\) and $e^{A_1\theta}$ ($\theta\geq 0$) are block diagonal with respect to total degree, more precisely
we have
  $A_1 =\operatorname{diag}(A_{1,1},A_{1,2},\ldots,A_{1,D})$ and
   $e^{A_1\theta} =\operatorname{diag}(e^{A_{1,1}\theta},e^{A_{1,2}\theta},\ldots,e^{A_{1,D}\theta})
$,
where \(A_{1,m}\) is the block of elements $(A_1)_{i,j}$ such that $\deg(\alpha_i)=\deg(\alpha_j)=m$.  In particular, for any row vector $q$, $q$ and $qe^{A_1\theta}$ are supported on the same degrees.
 Moreover
      \[
        \mu_\infty(A_{1,m})\le m\mu_1.
      \]
\item If \(\alpha_j=x^\beta\) has degree \(|\beta|=m\), then  \(e_j^TA_2\) is supported on degrees in
      $
        \{m+1,m+2,\ldots,\min(D,m+h)\}
      $. Moreover
      \[
        \|e_j^TA_2\|_1\le mC_2.
      \]
      Consequently, if \(q\) is a row supported on degrees at most
      \(b\), then
      \[
        \|q A_2\|_1\le \|q \|_1 bC_2.
      \]
\item 
       \(w(\tau)=B\psi(x(\tau;x_0))\)  is supported only on
      degrees at least
     $
        D-h+1=D-p+2,
     $. Moreover,   for all \(\tau\in[0,T]\),
      \[
        \|w(\tau)\|_\infty\le DC_2R^{D+1}R_+^{h-1}.
      \]
\end{enumerate}
\end{lemma}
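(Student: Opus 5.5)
The plan is to read everything off the explicit form of the Lie derivative of a single monomial. We treat the linear part \(F_1x\) and the nonlinear part \(f_{\ge2}\) separately, and then bound the coefficients row by row. The row of \(A\) indexed by \(\alpha_i=x^\beta\) is the coefficient vector of \(\Lie_f(x^\beta)\), restricted to degrees \(\le D\). The corresponding row of \(B\) collects the coefficients on degrees \(>D\).

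\emph{Step 1, for (i).} Since
\[
\Lie_{F_1x}(x^\beta)=\sum_{i,\ell}\beta_i(F_1)_{i\ell}\,x^{\beta-e_i+e_\ell},
\]
every monomial produced has degree \(|\beta|=m\). Hence \(A_1\) has no entries between different degrees and is block diagonal. Its exponential is then block diagonal with blocks \(e^{A_{1,m}\theta}\). It follows that \(q\) and \(qe^{A_1\theta}\) are supported on the same degrees.

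For the logarithmic norm, take the row of \(x^\beta\).
\begin{itemize}
\item The diagonal entry comes only from the terms with \(\ell=i\), since \(\beta-e_i+e_\ell=\beta\) forces \(\ell=i\). It equals \(\sum_i\beta_i(F_1)_{ii}\).
\item The off-diagonal entries come from pairs with \(\ell\ne i\). Several pairs may land on the same monomial, but the triangle inequality bounds the sum of the absolute values by \(\sum_i\beta_i\sum_{\ell\ne i}|(F_1)_{i\ell}|\).
\end{itemize}
Adding the two contributions gives \(\sum_i\beta_i\bigl((F_1)_{ii}+\sum_{\ell\ne i}|(F_1)_{i\ell}|\bigr)\le \sum_i\beta_i\mu_1=m\mu_1\). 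Taking the maximum over the rows of the block yields \(\mu_\infty(A_{1,m})\le m\mu_1\).

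\emph{Step 2, for (ii).} We have
\[
\Lie_{f_{\ge2}}(x^\beta)=\sum_i\beta_i\sum_{2\le|\gamma|\le p}c_{i\gamma}\,x^{\beta-e_i+\gamma}.
\]
Each monomial here has degree \(m-1+|\gamma|\in\{m+1,\dots,m+h\}\). The matrix \(A_2\) keeps only those of degree \(\le D\), so \(e_j^TA_2\) is supported on \(\{m+1,\dots,\min(D,m+h)\}\). Again by the triangle inequality, \(\|e_j^TA_2\|_1\le\sum_i\beta_i\sum_\gamma|c_{i\gamma}|\le mC_2\).

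For a general row, write \(qA_2=\sum_j q_je_j^TA_2\). Each nonzero \(q_j\) sits at a degree \(\le b\), so \(\|qA_2\|_1\le\sum_j|q_j|\,bC_2=\|q\|_1bC_2\).

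\emph{Step 3, for (iii).} The linear part preserves degree, so overflow monomials of degree \(>D\) can arise only from \(f_{\ge2}\). By Step 2, the row of \(\alpha_i\) can overflow only if \(\deg\alpha_i+h\ge D+1\). Hence \(w\) is supported on degrees \(\ge D-h+1\).

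For the bound on \(w\), each entry is
\[
w_i(\tau)=\sum_j b_{ij}\psi_j(x(\tau;x_0)),
\]
where \(\psi_j\) has degree \(d\in[D+1,D+h]\). Using \eqref{eq:Rsup}, we get \(|\psi_j(x(\tau))|\le R^d\le R^{D+1}R_+^{d-D-1}\le R^{D+1}R_+^{h-1}\); the middle inequality is where we use \(R\le R_+\) with \(R_+\ge1\). By the same counting as in Step 2, \(\sum_j|b_{ij}|\le\deg(\alpha_i)C_2\le DC_2\). Multiplying gives \(\|w(\tau)\|_\infty\le DC_2R^{D+1}R_+^{h-1}\).

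The only mildly delicate point is the logarithmic-norm estimate in (i). There we must check that the diagonal entry really collects only the \(\ell=i\) terms, and that coalescing of distinct \((i,\ell)\) pairs onto the same monomial only helps, via the triangle inequality. The rest is bookkeeping of degrees.
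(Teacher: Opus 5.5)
Your proposal is correct and follows the paper's proof essentially line by line: explicit Lie derivatives of a monomial under $F_1x$ and $f_{\ge2}$, the diagonal/off-diagonal split with the triangle inequality giving $\mu_\infty(A_{1,m})\le m\mu_1$, the row-sum bound $mC_2$, and the same bounds $DC_2$ on the rows of $B$ and $R^{D+1}R_+^{h-1}$ on the entries of $\psi$. The one step you pass over is in (i): block-diagonality alone only shows that the degree support of $qe^{A_1\theta}$ is contained in that of $q$. The stated equality also needs each block $e^{A_{1,m}\theta}$ to be invertible, so that $q_m\neq 0$ implies $q_m e^{A_{1,m}\theta}\neq 0$; the paper states this explicitly.
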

\begin{proof}
\begin{itemize}
\item[(i)]
The block structure of $A_1$ follows by the definition of $A$ and of $A_1$. As $(A_1\theta)^k=\operatorname{diag}((A_{1,1}\theta)^k,(A_{1,2}\theta)^k,\ldots,(A_{1,D}\theta)^k)$, the exponential matrix $e^{A_1\theta}=\sum_{k\geq 0} \frac{(A_1\theta)^k}{k!}$ preserves this structure.
Now, consider  any nonzero row vector $q$ and decompose it as $q=\sum_{m=1}^D q_m$, where $q_m$ is the $m$-degree component of $q$. Since $e^{A_1\theta}$ is   nonsingular, whenever $q_m\neq 0$ we have that $q_m e^{A_1\theta}\neq 0$ and supported on degree $m$: hence   $qe^{A_1\theta}=\sum_{m=1}^D q_m e^{A_1\theta}$ is supported exactly on the same degrees as $q$.
Next, let \(\alpha_j=x^\beta\), with \(|\beta|=m\).  The entries of   the row $e^T_j A_1$ are determined by  the linear vector field \(F_1 x\),
\[
  \Lie_{F_1x}(x^\beta)
  =\sum_{i=1}^n \beta_i x^{\beta-e_i}(F_1x)_i
  =\sum_{i=1}^n\sum_{\ell=1}^n
    \beta_i(F_1)_{i\ell}x^{\beta-e_i+e_\ell}
\]
(NB: terms with exponents having a negative $i$-th component are 0, as $\beta_i=0$).
In the above sum, the exponent is $\beta$ precisely when $i=\ell$, so the diagonal coefficient in row $e^T_j A_1$ is
\[
  \sum_{i=1}^n \beta_i(F_1)_{ii},
\]
while the off-diagonal row sum is bounded by
\[
  \sum_{i=1}^n\beta_i\sum_{\ell\ne i}|(F_1)_{i\ell}|.
\]
Therefore the logarithmic-norm contribution of this row is bounded by
\[
  \sum_{i=1}^n \beta_i
  \left((F_1)_{ii}+\sum_{\ell\ne i}|(F_1)_{i\ell}|\right)
  \le |\beta|\mu_1=m\mu_1.
\]
Taking the maximum over rows in the degree-\(m\) block gives
\(\mu_\infty(A_{1,m})\le m\mu_1\). This proves (i).

\item[(ii)]
The entries of   the row $e^T_j A_2$ are determined by  the nonlinear vector field,
\[
  \Lie_{f_{\ge2}}(x^\beta)
  =\sum_{i=1}^n \beta_i x^{\beta-e_i}(f_{\ge2})_i(x)
  =\sum_{i=1}^n\sum_{2\le |\gamma|\le p}
    \beta_i c_{i\gamma}x^{\beta-e_i+\gamma}.
\]
Every nonzero monomial has total degree
$
  |\beta|-1+|\gamma|\in\{m+1,\ldots,m+h\}
$.
Out of these,  only monomials up to total degree $D$ are considered in $A$, so $e^T_j A_2$ is supported on degrees in
\[
  |\beta|-1+|\gamma|\in\{m+1,\ldots,\min(D,m+h)\}.
\]
The absolute row sum is bounded by
\[
  \sum_{i=1}^n\beta_i\sum_{2\le |\gamma|\le p}|c_{i\gamma}|
  \le |\beta|C_2=mC_2.
\]
Thus, if \(q\) is supported on   degrees at most \(b\), then
\[
  \|qA_2\|_1
  \le \sum_j |q_j|\,\|e_j^TA_2\|_1
  \le bC_2\sum_j |q_j|
  =bC_2\|q\|_1.
\]
This proves (ii).

\item[(iii)]
By definition, the vector \(w(\tau)=B\psi(x(\tau;x_0))\) contains exactly the overflow terms of
\(\Lie_{f_{\ge2}}(\alpha)\), namely the  Lie-derivative nonlinear terms
of degree is    \(>D\). Since $\Lie_f$ can raise degree by
at most \(h\),  a monomial of degree \(m\) contributes to $w(\tau)$ only if
\[
  m+h>D,
\]
that is, only if \(m\ge D-h+1\). This proves the support assertion.
For a row $j$  of degree \(m\le D\), the 1-norm of the $j$-th row of $B$
is bounded as \(\|e^T_j B \|_1 \leq mC_2\le DC_2\). The entries of \(\psi\) are
monomials of degrees between \(D+1\) and \(D+h\). Since
\(\|x(\tau;x_0)\|_\infty\le R\), each entry of $\psi(x(\tau;x_0))$ is bounded by
\[
  R^{D+1}R_+^{h-1}.
\]
Indeed, for \(R\ge1\) this bounds all degrees up to \(D+h\), and for
\(0<R<1\) the largest value occurs at the smallest degree \(D+1\). Therefore
\[
  \|w(\tau)\|_\infty\le DC_2R^{D+1}R_+^{h-1}.
\]
This proves (iii).
\end{itemize}
\end{proof}

\subsection{Variation of parameters and Dyson expansion}
\begin{lemma}[Variation of parameters]
\label{lem:voc}
For every \(t\in[0,T]\),
\[
  \epsilon_D(t;x_0)
  =\int_0^t v^Te^{A(t-\tau)}w(\tau)\,d\tau.
\]
\end{lemma}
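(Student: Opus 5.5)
The plan is to derive an exact linear ODE for the full error vector and solve it by Duhamel's formula, then project onto the observable. Set
\[
  e(t):=\alpha(x(t;x_0))-z_D(t;z_0),\qquad t\in[0,T].
\]
By Theorem~\ref{th:section}, applied with the basis \(\alpha\) of all nonzero monomials of degree at most \(D\), the lifted exact trajectory \(\alpha(x(t;x_0))\) is the solution of \eqref{eq:system2}. That is, it satisfies \(\dot\zz=A\zz+w(t)\) with \(w(t)=B\psi(x(t;x_0))\) and \(\zz(0)=\alpha(x_0)\). The truncated solution \(z_D(t;z_0)=e^{At}\alpha(x_0)\) solves \eqref{eq:carltrunc}, which has the same matrix \(A\) and the same initial value. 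Subtracting the two systems gives
\[
  \dot e(t)=Ae(t)+w(t),\qquad e(0)=0 .
\]

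Next I will check that \(w\) is continuous on \([0,T]\), so that the integral is a Riemann integral. This holds because \(x(\cdot;x_0)\) is real analytic on its interval of definition, which contains \([0,T]\), and \(\psi\) is a vector of polynomials.

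I would then carry out the integrating-factor computation directly instead of only citing \cite[Prop.2.67]{Chicone}. Differentiating,
\[
  \tfrac{d}{d\tau}\bigl(e^{-A\tau}e(\tau)\bigr)=e^{-A\tau}\bigl(\dot e(\tau)-Ae(\tau)\bigr)=e^{-A\tau}w(\tau).
\]
Integrating over \([0,t]\) and using \(e(0)=0\), then multiplying on the left by \(e^{At}\), gives
\[
  e(t)=\int_0^t e^{A(t-\tau)}w(\tau)\,d\tau .
\]
Finally I multiply on the left by the constant row vector \(v^T\) and pass it inside the integral by linearity. Since \(g=v^T\alpha\), the left-hand side \(v^Te(t)\) is exactly the signed error \(v^T\alpha(x(t;x_0))-v^Tz_D(t;z_0)\) of \eqref{eq:error}.

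There is no real obstacle; the only point needing care is bookkeeping. In Subsection~\ref{sub:setup}, \(\epsilon_D\) was defined with an absolute value, so the identity should be read as an equality for the signed error. Equivalently, \(\epsilon_D(t;x_0)\) equals the absolute value of the integral, and hence is at most \(\int_0^t|v^Te^{A(t-\tau)}w(\tau)|\,d\tau\), which is the form used in the proof overview and in all subsequent estimates.
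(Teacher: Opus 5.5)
Your proposal is correct and follows the paper's proof essentially step for step. You form the error vector $e(t)=\alpha(x(t;x_0))-z_D(t;z_0)$, observe $\dot e=Ae+w$ with $e(0)=0$, solve by variation of parameters (you derive it via the integrating factor instead of citing Chicone), and then project with $v^T$. Your remark about the absolute value is also apt: since $\epsilon_D$ is defined with $|\cdot|$, the stated identity holds for the signed error, and the paper only uses the consequence $\epsilon_D(t;x_0)\le\int_0^t|v^Te^{A(t-\tau)}w(\tau)|\,d\tau$ in the proof of Theorem~\ref{thm:finite}.
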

\begin{proof}
Let
\[
  z(t):=\alpha(x(t;x_0))\quad z_D(t):=e^{At}\alpha(x_0)\quad e(t):=z(t)-z_D(t)\,.
\]
By construction
\ifmai
 of the truncated Carleman representation,
\[
  \dot z(t)=Az(t)+w(t),
  \qquad z(0)=\alpha(x_0).
\]
The truncated Carleman solution satisfies
\[
  \dot z_D(t)=Az_D(t),
  \qquad z_D(0)=\alpha(x_0).
\]
Set \(e(t):=z(t)-z_D(t)\). Then
\fi
the error  vector $e(t)$ satisfies
\[
  \dot e(t)=Ae(t)+w(t),
  \qquad e(0)=0.
\]
The finite-dimensional variation-of-parameters formula  \cite[Prop.2.67]{Chicone} gives
\[
  e(t)=\int_0^t e^{A(t-\tau)}w(\tau)\,d\tau.
\]
Multiplying by \(v^T\) yields the claim.
\end{proof}

The next two results are standard, see e.g. \cite{Dyson,Pazy1983}; we report the simple proofs to be self-contained.

\begin{lemma}[Duhamel formula]\label{lemma:Duhamel}
Let \(A=A_1+A_2\) be square matrices. Then, for every \(s\ge0\),
\[
  e^{As}
  =
  e^{A_1s}
  +
  \int_0^s e^{A_1(s-r)}A_2e^{Ar}\,dr .
\]
\end{lemma}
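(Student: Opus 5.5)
The plan is the standard interaction-picture argument: compare both sides as solutions of one linear matrix ODE and invoke uniqueness. Fix $s\ge0$ and put
\[
  Y(s):=e^{As},\qquad Z(s):=e^{A_1s}+\int_0^s e^{A_1(s-r)}A_2e^{Ar}\,dr .
\]
Since $Y$ satisfies $\dot Y=AY=A_1Y+A_2Y$ with $Y(0)=I$, it suffices to show that $Z$ satisfies
\[
  \dot Z(s)=A_1Z(s)+A_2e^{As},\qquad Z(0)=I .
\]
Granting this, the difference $E:=Z-Y$ solves $\dot E=A_1E$ with $E(0)=0$. By uniqueness for linear ODEs (Picard--Lindel\"of), $E\equiv0$.

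To obtain the derivative of $Z$, first rewrite the integral as
\[
  \int_0^s e^{A_1(s-r)}A_2e^{Ar}\,dr=e^{A_1s}\int_0^s e^{-A_1r}A_2e^{Ar}\,dr .
\]
This uses only that $e^{A_1(s-r)}=e^{A_1s}e^{-A_1r}$, which holds because $A_1$ commutes with itself. No commutativity between $A_1$ and $A_2$ is needed anywhere in the argument.

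Next apply the product rule together with the fundamental theorem of calculus. The integrand $r\mapsto e^{-A_1r}A_2e^{Ar}$ is continuous, so
\[
  \dot Z(s)=A_1e^{A_1s}+A_1e^{A_1s}\int_0^s e^{-A_1r}A_2e^{Ar}\,dr+e^{A_1s}e^{-A_1s}A_2e^{As}=A_1Z(s)+A_2e^{As}.
\]
Setting $s=0$ in the definition of $Z$ gives $Z(0)=I$, which completes the argument.

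An alternative route avoids ODE uniqueness. One differentiates $\phi(r):=e^{A_1(s-r)}e^{Ar}$ in $r$ to get
\[
  \phi'(r)=e^{A_1(s-r)}(A-A_1)e^{Ar}=e^{A_1(s-r)}A_2e^{Ar},
\]
and then integrates from $0$ to $s$, which gives $e^{As}-e^{A_1s}$ directly. I would actually present this second version because it is shorter. The only point needing care is the order of the factors, since $A_1$ and $A_2$ do not commute; in both versions each derivative of an exponential is placed on the side where its own generator appears, so the order is respected. Nothing else in the proof is delicate.
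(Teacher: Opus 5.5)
Your proposal is correct, and the version you say you would present (differentiating $\phi(r)=e^{A_1(s-r)}e^{Ar}$, obtaining $e^{A_1(s-r)}A_2e^{Ar}$, and integrating over $[0,s]$) is exactly the paper's proof. Your first, ODE-uniqueness route is also valid.
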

\begin{proof}
Fix \(s\ge0\) and define
\[
  \Phi(r):=e^{A_1(s-r)}e^{Ar},
  \qquad 0\le r\le s .
\]
Then
\[
  \Phi'(r)
  =
  -e^{A_1(s-r)}A_1e^{Ar}
  +
  e^{A_1(s-r)}Ae^{Ar}.
\]
Since \(A=A_1+A_2\), this becomes
\[
  \Phi'(r)
  =
  e^{A_1(s-r)}(A-A_1)e^{Ar}
  =
  e^{A_1(s-r)}A_2e^{Ar}.
\]
Integrating over \(r\in[0,s]\), we get
\[
  \Phi(s)-\Phi(0)
  =
  \int_0^s e^{A_1(s-r)}A_2e^{Ar}\,dr .
\]
But
\[
  \Phi(s)=e^{As},
  \qquad
  \Phi(0)=e^{A_1s}.
\]
Therefore
\[
  e^{As}-e^{A_1s}
  =
  \int_0^s e^{A_1(s-r)}A_2e^{Ar}\,dr,
\]
which proves the formula.
\end{proof}

\begin{lemma}[Dyson expansion]
\label{lem:dyson}
For every \(s\ge0\),
\[
  e^{As}
  =
  \sum_{j=0}^{\infty}
  \int_{\Delta_j(s)}
  e^{A_1(s-s_j)}A_2e^{A_1(s_j-s_{j-1})}
  \cdots A_2e^{A_1s_1}
  \,ds_1\cdots ds_j,
\]
where
\[
  \Delta_j(s):=\{(s_1,\ldots,s_j):0\le s_1\le\cdots\le s_j\le s\}
\]
for \(j\ge1\), and \(\Delta_0(s):=\{()\}\). The \(j=0\) term is
understood as \(e^{A_1s}\). The series converges absolutely in any matrix norm,
uniformly for \(s\) in compact intervals.
\end{lemma}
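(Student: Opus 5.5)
We will prove the Dyson expansion by iterating the Duhamel formula of Lemma~\ref{lemma:Duhamel} and then showing that the remainder vanishes. Write $T_j(s)$ for the $j$-th term of the series:
\[
T_0(s):=e^{A_1 s},\qquad
T_j(s):=\int_{\Delta_j(s)} e^{A_1(s-s_j)}A_2e^{A_1(s_j-s_{j-1})}\cdots A_2e^{A_1s_1}\,ds_1\cdots ds_j .
\]
Equivalently, the terms are defined recursively by
\[
T_{j+1}(s)=\int_0^s e^{A_1(s-r)}A_2T_j(r)\,dr .
\]
This identity is just Fubini on the simplex: fixing the largest time $s_{j+1}=r$, the remaining variables range over $\Delta_j(r)$.

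\textbf{Step 1 (finite expansion with remainder).} We show by induction on $N\ge0$ that
\[
e^{As}=\sum_{j=0}^{N}T_j(s)+\mathcal R_N(s),
\]
where the remainder is
\[
\mathcal R_N(s):=\int_{\Delta_{N+1}(s)} e^{A_1(s-s_{N+1})}A_2e^{A_1(s_{N+1}-s_N)}\cdots A_2 e^{As_1}\,ds_1\cdots ds_{N+1}.
\]
It has the same shape as $T_{N+1}(s)$, except that the innermost exponential is the full $e^{As_1}$.

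For $N=0$ this is exactly Lemma~\ref{lemma:Duhamel}. For the inductive step, apply Lemma~\ref{lemma:Duhamel} to the innermost factor $e^{As_1}$, with $s_1$ in place of $s$:
\[
e^{As_1}=e^{A_1s_1}+\int_0^{s_1}e^{A_1(s_1-r)}A_2e^{Ar}\,dr .
\]
Substitute this into $\mathcal R_N(s)$. The first summand produces $T_{N+1}(s)$. In the second, rename the integration variables so that $r$ becomes the new smallest time; this produces $\mathcal R_{N+1}(s)$ over $\Delta_{N+2}(s)$. All integrands are continuous in the time variables, so exchanging the order of integration is justified.

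\textbf{Step 2 (remainder and absolute convergence).} Fix any submultiplicative matrix norm and a compact interval $s\in[0,S]$. Set
\[
a:=\max\bigl(\sup_{0\le\theta\le S}\|e^{A_1\theta}\|,\ \sup_{0\le\theta\le S}\|e^{A\theta}\|\bigr),
\]
which is finite by continuity. The integrand of $\mathcal R_N$ has $N+2$ exponential factors and $N+1$ copies of $A_2$, so its norm is at most $a^{N+2}\|A_2\|^{N+1}$. Since $\operatorname{vol}\Delta_{N+1}(s)=s^{N+1}/(N+1)!$, we get
\[
\|\mathcal R_N(s)\|\le a^{N+2}\|A_2\|^{N+1}\frac{S^{N+1}}{(N+1)!}\longrightarrow0 .
\]
The same estimate gives $\|T_j(s)\|\le a^{j+1}\|A_2\|^jS^j/j!$, which is summable. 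Hence the series converges absolutely and uniformly on $[0,S]$, and its sum equals $e^{As}$ by Step 1. Since all norms on a finite-dimensional space are equivalent, absolute convergence in one norm gives it in every norm.

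There is no substantial obstacle in this argument. The only step needing care is the bookkeeping in the inductive step: after renaming variables, the iterated integrals must land exactly on the ordered simplex $\Delta_{N+2}(s)$. Writing the terms recursively as in $T_{j+1}(s)=\int_0^s e^{A_1(s-r)}A_2T_j(r)\,dr$, and similarly for $\mathcal R_N$, makes this immediate.
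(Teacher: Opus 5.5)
Your proof is correct and follows the paper's argument essentially step for step: iterate Duhamel's formula to get a finite expansion with an iterated-integral remainder, make the remainder vanish using the simplex volume $s^{N+1}/(N+1)!$, and get absolute uniform convergence from the same majorant together with equivalence of norms. The differences are cosmetic. You start from the simplex integrals and derive the recursion by Fubini, whereas the paper goes the other way. Your constant $a^{N+2}$ is cruder than the paper's telescoped $e^{(\|A_1\|+\|A\|)S}$, which is harmless since the factorial dominates.
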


\begin{proof}
By Lemma~\ref{lemma:Duhamel}, Duhamel's formula gives
\[
  e^{As}
  =
  e^{A_1s}
  +
  \int_0^s e^{A_1(s-r)}A_2e^{Ar}\,dr .
\]
Define
\[
  U_0(s):=e^{A_1s},
  \qquad
  U_{j+1}(s):=
  \int_0^s e^{A_1(s-r)}A_2U_j(r)\,dr .
\]
Iterating Duhamel's formula yields, for every \(N\ge0\),
\[
  e^{As}
  =
  \sum_{j=0}^{N}U_j(s)+R_{N+1}(s),
\]
where
\[
  R_{N+1}(s)
  =
  \int_{0\le r_{N+1}\le\cdots\le r_1\le s}
  e^{A_1(s-r_1)}A_2e^{A_1(r_1-r_2)}A_2\cdots
  A_2e^{A_1(r_N-r_{N+1})}A_2e^{Ar_{N+1}}
  \,dr_{N+1}\cdots dr_1 .
\]
We first show that \(R_{N+1}(s)\to0\) uniformly on compact time intervals.
Fix \(S>0\) and \(s\in[0,S]\). In any submultiplicative matrix norm,
\[
\begin{aligned}
  \|R_{N+1}(s)\|
  &\le
  e^{\|A_1\|S}e^{\|A\|S}
  \|A_2\|^{N+1}
  \operatorname{vol}\{0\le r_{N+1}\le\cdots\le r_1\le s\}  \\
  &=
  e^{(\|A_1\|+\|A\|)S}
  \frac{(s\|A_2\|)^{N+1}}{(N+1)!}  \\
  &\le
  e^{(\|A_1\|+\|A\|)S}
  \frac{(S\|A_2\|)^{N+1}}{(N+1)!}.
\end{aligned}
\]
Hence \(R_{N+1}(s)\to0\) uniformly for \(s\in[0,S]\).
Expanding the recursion defining \(U_j\), and then changing variables by
\[
  s_j=r_1,\quad s_{j-1}=r_2,\quad\ldots,\quad s_1=r_j,
\]
gives
\[
  U_j(s)
  =
  \int_{\Delta_j(s)}
  e^{A_1(s-s_j)}A_2e^{A_1(s_j-s_{j-1})}
  \cdots A_2e^{A_1s_1}
  \,ds_1\cdots ds_j .
\]
Letting \(N\to\infty\) gives the stated expansion.
It remains only to record absolute convergence. For \(s\in[0,S]\),
\[
\left\|
e^{A_1(s-s_j)}
A_2 e^{A_1(s_j-s_{j-1})}
\cdots
A_2 e^{A_1s_1}
\right\|
\le
e^{\|A_1\|S}\|A_2\|^j .
\]
Since
\[
  \operatorname{vol}(\Delta_j(s))=\frac{s^j}{j!},
\]
the norm of the \(j\)-th term is bounded by
\[
  e^{\|A_1\|S}\frac{(S\|A_2\|)^j}{j!}.
\]
The corresponding majorant series is convergent. Therefore the Dyson series
converges absolutely and uniformly for \(s\in[0,S]\) in every submultiplicative
matrix norm. Since all norms are equivalent in finite dimension, the same
conclusion holds in any matrix norm.
\end{proof}

\subsection{The refined Dyson-tail estimate}
\begin{lemma}[Degree-local $e^{A_1\theta}$ estimate]
\label{lem:degree-local-a1}
Let \(q\in\R^{1\times M}\). If \(\mu_1<0\) and  $q$ is supported   on   degrees at least
\(a\)   then for every \(\theta\ge0\),
\[
  \|q e^{A_1\theta}\|_1\le e^{a\mu_1\theta}\|q\|_1.
\]
If \(\mu_1\ge 0\) and \(q\) is supported   on degrees at most
\(b\), then
\[
  \|q e^{A_1\theta}\|_1\le e^{b\mu_1\theta}\|q\|_1.
\]
\end{lemma}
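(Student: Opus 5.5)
The plan is to reduce the claim to the block-diagonal structure of Lemma~\ref{lem:degree-structure}(i), and then to a standard logarithmic-norm bound applied block by block.

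First, decompose $q=\sum_{m=1}^D q_m$, where $q_m$ is the degree-$m$ component of $q$. By Lemma~\ref{lem:degree-structure}(i),
\[
  q e^{A_1\theta}=\sum_{m=1}^D q_m e^{A_{1,m}\theta},
\]
and each summand is supported on degree $m$ only. Since these supports are pairwise disjoint,
\[
  \|q e^{A_1\theta}\|_1=\sum_m \|q_m e^{A_{1,m}\theta}\|_1 .
\]
For row vectors, $\|rM\|_1\le \|r\|_1\|M\|_\infty$, because the $\ell^\infty$-induced norm is the maximal absolute row sum. It therefore suffices to bound $\|e^{A_{1,m}\theta}\|_\infty$.

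The key ingredient is the classical logarithmic-norm inequality $\|e^{U\theta}\|_\infty\le e^{\mu_\infty(U)\theta}$ for $\theta\ge0$. This is the step I expect to need the most care, since the paper only quotes it informally in the introduction. I would prove it by writing
\[
  e^{U\theta}=\lim_{N\to\infty}\bigl(I+\tfrac{\theta}{N}U\bigr)^N .
\]
For small $\delta=\theta/N$, every diagonal entry of $I+\delta U$ is positive, so its absolute row sums equal $1+\delta\bigl(U_{ii}+\sum_{\ell\ne i}|U_{i\ell}|\bigr)$. Hence
\[
  \|I+\delta U\|_\infty=1+\delta\mu_\infty(U).
\]
Submultiplicativity then gives $\|e^{U\theta}\|_\infty\le\lim_N(1+\theta\mu_\infty(U)/N)^N=e^{\theta\mu_\infty(U)}$. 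The sign of $\mu_\infty(U)$ does not matter here, since $1+\delta\mu_\infty(U)>0$ for small $\delta$. Combining this with $\mu_\infty(A_{1,m})\le m\mu_1$ from Lemma~\ref{lem:degree-structure}(i) yields
\[
  \|q_m e^{A_{1,m}\theta}\|_1\le e^{m\mu_1\theta}\|q_m\|_1 .
\]

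Finally, split into the two cases of the statement.

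\emph{Case $\mu_1<0$.} Every nonzero component has $m\ge a$, and $m\mapsto e^{m\mu_1\theta}$ is nonincreasing, so $e^{m\mu_1\theta}\le e^{a\mu_1\theta}$.

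\emph{Case $\mu_1\ge0$.} Every nonzero component has $m\le b$, and $m\mapsto e^{m\mu_1\theta}$ is nondecreasing, so $e^{m\mu_1\theta}\le e^{b\mu_1\theta}$.

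In either case, summing over $m$ and using $\sum_m\|q_m\|_1=\|q\|_1$ gives the claimed bound.
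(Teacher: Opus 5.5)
Your proposal is correct and follows essentially the same route as the paper: split $q$ into its degree components, use the block-diagonal structure and $\mu_\infty(A_{1,m})\le m\mu_1$ from Lemma~\ref{lem:degree-structure}(i) to get $\|e^{A_{1,m}\theta}\|_\infty\le e^{m\mu_1\theta}$, apply $\|rM\|_1\le\|r\|_1\|M\|_\infty$ blockwise, and then split on the sign of $\mu_1$. The only differences are minor: you prove the standard bound $\|e^{U\theta}\|_\infty\le e^{\theta\mu_\infty(U)}$ via $(I+\tfrac{\theta}{N}U)^N$, which the paper uses without proof, and you sum over disjoint degree supports (an equality) where the paper uses the triangle inequality.
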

\begin{proof}
Write \(q=\sum_m q_m\), where \(q_m\) is the component of \(r\) supported on
  degree \(m\). By Lemma~\ref{lem:degree-structure},
\(A_1\) is block diagonal with degree-\(m\) block \(A_{1,m}\), and
\(\mu_\infty(A_{1,m})\le m\mu_1\). Hence
\[
  \|e^{A_{1,m}\theta}\|_\infty\le e^{m\mu_1\theta}.
\]
Using \(\|qU\|_1\le\|q\|_1\|U\|_\infty\) for any matrix $U$:
\[
  \|q e^{A_1\theta}\|_1
  \le \sum_m \|q_m\|_1e^{m\mu_1\theta}.
\]
If \(\mu_1<0\) and \(m\ge a\), then
\(e^{m\mu_1\theta}\le e^{a\mu_1\theta}\). Therefore
\[
  \|q e^{A_1\theta}\|_1
  \le e^{a\mu_1\theta}\sum_m\|q_m\|_1
  =e^{a\mu_1\theta}\|q\|_1.
\]
If \(\mu_1\ge0\) and \(m\le b\), then
\(e^{m\mu_1\theta}\le e^{b\mu_1\theta}\), and similarly
\[
  \|q e^{A_1\theta}\|_1
  \le e^{b\mu_1\theta}\|q\|_1.
\]
\end{proof}

\begin{lemma}[Refined Dyson-tail estimate]
\label{lem:refined-tail}
Let \(w\in\R^M\) be supported   on  degrees at least
\(D-h+1\), and let \(v\) be supported only on degree 1. Then, for
all \(s\in[0,T]\),
\[
  |v^Te^{As}w|
  \le
  \|v\|_1\|w\|_\infty e^{\mu_1s}
  \sum_{j=k_D}^{D-1}P_jC_2^j\frac{\Theta(s)^j}{j!}.
\]
\end{lemma}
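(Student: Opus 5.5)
The plan is to insert the Dyson expansion of Lemma~\ref{lem:dyson} into \(v^Te^{As}w\), discard the terms that vanish for degree reasons, and estimate each remaining term factor by factor from left to right. Writing \(u_0:=s-s_j,\ u_1:=s_j-s_{j-1},\ldots,u_j:=s_1\) for the gaps, the \(j\)-th term is the integral over \(\Delta_j(s)\) of \(q_j w\). Here \(q_0:=v^Te^{A_1u_0}\) and \(q_{r+1}:=q_rA_2e^{A_1u_{r+1}}\). Since the series converges absolutely (Lemma~\ref{lem:dyson}), we may bound \(|v^Te^{As}w|\) by the sum over \(j\) of the integrals of \(|q_jw|\le\|q_j\|_1\|w\|_\infty\).

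\textbf{Support bookkeeping.} By induction on \(r\), using Lemma~\ref{lem:degree-structure}(i) for the exponentials and (ii) for \(A_2\), the row \(q_r\) is supported on degrees in \([1+r,\min(1+rh,D)]\). In particular \(q_j=0\) once \(1+j>D\), i.e. \(j\ge D\). By Lemma~\ref{lem:degree-structure}(iii)-type hypothesis, \(w\) lives on degrees \(\ge D-h+1\). Hence \(q_jw=0\) unless \(1+jh\ge D-h+1\), i.e. \(j\ge (D-h)/h\), i.e. \(j\ge k_D\). So only \(k_D\le j\le D-1\) survive.

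\textbf{Norm bookkeeping.} Before the \((r+1)\)-th \(A_2\) the row is supported on degrees \(\le 1+rh\), so Lemma~\ref{lem:degree-structure}(ii) gives a factor \((1+rh)C_2\). Over \(r=0,\ldots,j-1\) these factors multiply to exactly \(P_jC_2^j\). For the exponential acting on the gap \(u_r\), Lemma~\ref{lem:degree-local-a1} gives a factor depending on the sign of \(\mu_1\):
\begin{itemize}
\item if \(\mu_1<0\), use the lower degree bound \(1+r\), giving \(e^{(1+r)\mu_1u_r}\);
\item if \(\mu_1\ge0\), use the upper degree bound \(1+rh\), giving \(e^{(1+rh)\mu_1u_r}\).
\end{itemize}
Since \(\sum_r u_r=s\), the product of these exponentials is \(e^{\mu_1 s}\exp\bigl(c\sum_{r=1}^j r u_r\bigr)\), with \(c=\mu_1^-\) in the first case and \(c=h\mu_1^+\) in the second. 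In both cases \(c=\mu^-_1+h\mu^+_1\).

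\textbf{The simplex integral (main step).} Write \(\sum_r r u_r=\sum_{r=1}^j\sigma_r\) with tail sums \(\sigma_r:=u_r+\cdots+u_j\). Then \(\sigma_r=s_{j-r+1}\), so \((\sigma_j,\ldots,\sigma_1)\) is exactly \((s_1,\ldots,s_j)\in\Delta_j(s)\). The integral therefore becomes \(\int_{\Delta_j(s)}\prod_i e^{cs_i}\,ds\). This integrand is symmetric in the variables, so the integral equals \(\frac1{j!}\bigl(\int_0^se^{c\sigma}d\sigma\bigr)^j=\Theta(s)^j/j!\). Collecting the factors \(\|v\|_1\|w\|_\infty e^{\mu_1 s}P_jC_2^j\Theta(s)^j/j!\) and summing over \(k_D\le j\le D-1\) gives the claim.

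I expect the only delicate point to be verifying this index bookkeeping: that the \(r\)-th gap indeed carries degree index \(r\), and that the sign-dependent choice of degree bound matches \(\Theta\)'s exponent \(\mu_1^-+h\mu_1^+\) exactly, including the degenerate case \(\mu_1=0\).
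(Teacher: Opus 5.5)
Your proposal is correct and follows the paper's proof essentially step for step: the same Dyson expansion with gaps $u_r$ (the paper's $\ell_r$), the same support argument cutting the sum to $k_D\le j\le D-1$, the same factor $(1+rh)C_2$ per $A_2$, the same sign-dependent choice of the lower bound $1+r$ or the upper bound $1+rh$ on the degree, and the same symmetrization of the simplex integral giving $\Theta(s)^j/j!$. Your tail-sum identification $u_r+\cdots+u_j=s_{j-r+1}$ is exactly the paper's telescoping identity. Writing the exponent directly as $\mu_1^-+h\mu_1^+$, rather than through the paper's case parameter, matches the definition of $\Theta$ and covers $\mu_1=0$ without extra work.
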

\begin{proof}
By the absolute convergence in Lemma~\ref{lem:dyson}, and since
\(U\mapsto v^TUw\) is a continuous linear functional, we may apply \(v^T\) and
\(w\) term by term to the Dyson series:
\begin{equation}\label{eq:lemdyson}
  v^Te^{As}w
  =
  \sum_{j=0}^{\infty}
  \int_{\Delta_j(s)}
  v^Te^{A_1(s-s_j)}
  A_2e^{A_1(s_j-s_{j-1})}
  \cdots
  A_2e^{A_1s_1}w
  \,ds_1\cdots ds_j .
\end{equation}
We shall use the following two elementary norm estimates. If \(q\) is a row
vector, \(U\) a matrix, and \(u\) a column vector, then
\[
  |qu|\le \|q\|_1\|u\|_\infty,
  \qquad
  \|qU\|_1\le \|q\|_1\|U\|_\infty .
\]
The first estimate is \(\ell^1\)-\(\ell^\infty\) duality. The second is the
corresponding induced estimate for row vectors, equivalently applied to
\(U^Tq^T\).

First consider the expression inside the integrand. The row vector \(v^T\) is supported on
degree \(1\). By Lemma~\ref{lem:degree-structure}(i), each factor
\(e^{A_1\theta}\) leaves the support  degree unchanged. By Lemma~\ref{lem:degree-structure}(ii), each   occurrence
of \(A_2\) increases  the support  degree   by at least \(1\) and at most \(h\), up to a maximum degree of $D$. Therefore,
after \(j\) occurrences of \(A_2\), the row support is contained in degrees
between
\begin{equation}\label{eq:range}
  1+j
  \qquad\text{and}\qquad
  \min(D,1+jh).
\end{equation}
If \(j<k_D\), then
\[
  1+jh<D-h+1,
\]
so the row support is disjoint from the support of \(w\), which by hypothesis
is contained in degrees at least \(D-h+1\). Hence all Dyson terms in \eqref{eq:lemdyson} with
\(j<k_D\) vanish. If \(j\ge D\), then \eqref{eq:range} gives  an empty range,  hence these terms vanish as well. Thus only
indices $j$ such that
\[
  k_D\le j\le D-1
\]
can contribute to the sum in \eqref{eq:lemdyson}.
\ifmai
Fix such a \(j\), and fix
\((s_1,\ldots,s_j)\in\Delta_j(s)\). Set
\[
  \ell_0:=s-s_j,\qquad
  \ell_r:=s_{j-r+1}-s_{j-r}\quad(1\le r\le j-1),
  \qquad
  \ell_j:=s_1 .
\]
Then the fixed \(j\)-th integrand can be written as
\[
  v^T e^{A_1\ell_0}A_2e^{A_1\ell_1}
  A_2\cdots A_2e^{A_1\ell_j}w .
\]
%
Before the \(r\)-th occurrence of \(A_2\), with \(r=0,\ldots,j-1\), the row
support has degree at most \(1+rh\). By Lemma~\ref{lem:degree-structure},
the corresponding \(A_2\)-factor satisfies the row-vector estimate
\[
  \|qA_2\|_1
  \le
  (1+rh)C_2\|q\|_1
\]
for any row vector \(q\) with this degree support. Therefore the product of
the \(j\) nonlinear factors contributes at most
\[
  \prod_{r=0}^{j-1}(1+rh)C_2
  =
  P_jC_2^j
\]
to the propagated row \(\ell^1\)-norm.

It remains to estimate the \(A_1\)-semigroup factors. If \(\mu_1<0\), then
after \(r\) occurrences of \(A_2\) the row support is contained in degrees at
least \(1+r\). Applying Lemma~\ref{lem:degree-local-a1} to the \(A_1\)-factor
over the interval \(\ell_r\) gives the factor
\[
  e^{(1+r)\mu_1\ell_r}.
\]
Multiplying over \(r=0,\ldots,j\), the total \(A_1\)-contribution is bounded by
\[
  \exp\left(\mu_1\sum_{r=0}^j(1+r)\ell_r\right).
\]
A direct telescoping computation gives
\[
  \sum_{r=0}^j(1+r)\ell_r
  =
  s+s_1+\cdots+s_j .
\]
Hence, when \(\mu_1<0\), the \(A_1\)-contribution is bounded by
\[
  e^{\mu_1s}e^{\mu_1(s_1+\cdots+s_j)}.
\]

If \(\mu_1\ge0\), then after \(r\) occurrences of \(A_2\) the row support is
contained in degrees at most \(1+rh\). Applying
Lemma~\ref{lem:degree-local-a1} to the \(A_1\)-factor over \(\ell_r\) gives
the factor
\[
  e^{(1+rh)\mu_1\ell_r}.
\]
Multiplying over \(r=0,\ldots,j\), the total \(A_1\)-contribution is bounded by
\[
  \exp\left(\mu_1\sum_{r=0}^j(1+rh)\ell_r\right).
\]
Again by telescoping,
\[
  \sum_{r=0}^j(1+rh)\ell_r
  =
  s+h(s_1+\cdots+s_j).
\]
Hence, when \(\mu_1\ge0\), the \(A_1\)-contribution is bounded by
\[
  e^{\mu_1s}e^{h\mu_1(s_1+\cdots+s_j)}.
\]

By the definition of \(\sigma\), the two cases are expressed uniformly as
\[
  e^{\mu_1s}e^{\sigma\mu_1(s_1+\cdots+s_j)}.
\]
Combining the \(A_1\)- and \(A_2\)-bounds, and finally applying
\(\ell^1\)-\(\ell^\infty\) duality to the terminal pairing with \(w\), the
absolute value of the fixed \(j\)-th Dyson integrand is bounded by
\[
  \|v\|_1\|w\|_\infty
  P_jC_2^j
  e^{\mu_1s}e^{\sigma\mu_1(s_1+\cdots+s_j)} .
\]

Integrating over \(\Delta_j(s)\), we obtain
\[
  \int_{\Delta_j(s)}
  e^{\sigma\mu_1(s_1+\cdots+s_j)}
  \,ds_1\cdots ds_j .
\]
The integrand is symmetric in \(s_1,\ldots,s_j\), and the ordered simplices
partition \([0,s]^j\) up to measure-zero boundaries. Hence
\[
  \int_{\Delta_j(s)}
  e^{\sigma\mu_1(s_1+\cdots+s_j)}
  \,ds_1\cdots ds_j
  =
  \frac1{j!}
  \int_{[0,s]^j}
  e^{\sigma\mu_1(u_1+\cdots+u_j)}
  \,du_1\cdots du_j .
\]
By product structure,
\[
  \int_{[0,s]^j}
  e^{\sigma\mu_1(u_1+\cdots+u_j)}
  \,du_1\cdots du_j
  =
  \left(\int_0^s e^{\sigma\mu_1u}\,du\right)^j
  =
  \Theta(s)^j .
\]
Therefore,
\[
  \int_{\Delta_j(s)}
  e^{\sigma\mu_1(s_1+\cdots+s_j)}
  \,ds_1\cdots ds_j
  =
  \frac{\Theta(s)^j}{j!}.
\]

Summing over \(j=k_D,\ldots,D-1\), we get
\[
  |v^Te^{As}w|
  \le
  \|v\|_1\|w\|_\infty e^{\mu_1s}
  \sum_{j=k_D}^{D-1}
  P_jC_2^j\frac{\Theta(s)^j}{j!}.
\]
This proves the claim.
\end{proof}
\begin{proof}
By Lemma~\ref{lem:dyson},
\[
  v^Te^{As}w
  =
  \sum_{j=0}^{\infty}
  \int_{\Delta_j(s)}
  v^Te^{A_1(s-s_j)}
  A_2e^{A_1(s_j-s_{j-1})}
  \cdots
  A_2e^{A_1s_1}w
  \,ds_1\cdots ds_j .
\]
We first record the two norm estimates used throughout. If \(r\) is a row
vector and \(u\) is a column vector, then
\[
  |ru|\le \|r\|_1\|u\|_\infty .
\]
If \(M\) is a matrix, then
\[
  \|rM\|_1\le \|r\|_1\|M\|_\infty .
\]
Equivalently, this is the usual induced-norm inequality applied to
\(M^Tr^T\). Thus the row vectors generated along a Dyson path are estimated in
\(\ell^1\), while the final vector \(w\) is estimated in \(\ell^\infty\).

Consider first the support restrictions. The row vector \(v^T\) is supported on
degree-one monomials. By Lemma~\ref{lem:degree-local-a1}, \(e^{A_1\theta}\)
does not mix homogeneous degrees. By Lemma~\ref{lem:degree-structure}, each
nonzero occurrence of \(A_2\) raises total degree by at least \(1\) and at most
\(h\). Hence after \(j\)
occurrences of \(A_2\), the row support is contained in degrees between
\[
  1+j
  \qquad\text{and}\qquad
  1+jh .
\]
If \(j<k_D\), then
\[
  1+jh<D-h+1,
\]
so the row support is disjoint from the support of \(w\). Hence all terms with
\(j<k_D\) vanish. If \(j\ge D\), then the minimum possible degree after
\(j\) nonzero occurrences of \(A_2\) is at least \(1+j\ge D+1\), outside the
truncated degree-\(D\) space. Hence all terms with \(j\ge D\) vanish. Thus only
\[
  k_D\le j\le D-1
\]
can contribute.
\fi
Fix such a \(j\), and fix
\[
  (s_1,\ldots,s_j)\in\Delta_j(s).
\]
Set
\[
  \ell_0:=s-s_j,\qquad
  \ell_r:=s_{j-r+1}-s_{j-r}\quad(1\le r\le j-1),
  \qquad
  \ell_j:=s_1 .
\]
Thus the \(j\)-th Dyson integrand is
\[
  v^Te^{A_1\ell_0}A_2e^{A_1\ell_1}
  A_2\cdots A_2e^{A_1\ell_j}w .
\]
Define row vectors \(q_r,\widetilde q_r\) recursively by
\[
  q_0:=v^T,
\]
and, for \(r=0,\ldots,j-1\),
\[
  \widetilde q_r:=q_r e^{A_1\ell_r},
  \qquad
  q_{r+1}:=\widetilde q_r A_2.
\]
Finally set
\[
  \widetilde q_j:=q_j e^{A_1\ell_j}.
\]
Then the fixed \(j\)-th integrand is exactly
\[
  \widetilde q_j w .
\]
We now estimate \(\|\widetilde q_j\|_1\) factor by factor. After \(r\)
occurrences of \(A_2\), the row vector \(q_r\) is supported only on degrees
between
\[
  1+r
  \qquad\text{and}\qquad
  1+rh .
\]
For the \(A_1\)-factor over the interval \(\ell_r\), Lemma~\ref{lem:degree-local-a1}
gives the following estimate. If \(\mu_1<0\), the lower degree bound \(1+r\)
is used, and
\[
  \|\widetilde q_r\|_1
  =
  \|q_re^{A_1\ell_r}\|_1
  \le
  e^{(1+r)\mu_1\ell_r}\|q_r\|_1 .
\]
If \(\mu_1\ge0\), the upper degree bound \(1+rh\) is used, and
\[
  \|\widetilde q_r\|_1
  =
  \|q_re^{A_1\ell_r}\|_1
  \le
  e^{(1+rh)\mu_1\ell_r}\|q_r\|_1 .
\]
For the subsequent \(A_2\)-factor, with \(r=0,\ldots,j-1\), the row
\(\widetilde q_r\) is still supported on degrees at most \(1+rh\), since
\(A_1\) does not mix degrees. Hence Lemma~\ref{lem:degree-structure} gives
\[
  \|q_{r+1}\|_1
  =
  \|\widetilde q_rA_2\|_1
  \le
  (1+rh)C_2\|\widetilde q_r\|_1 .
\]
Combining these two estimates along the path gives the following. If
\(\mu_1<0\), then
\[
  \|\widetilde q_j\|_1
  \le
  \|v\|_1
  \left(\prod_{r=0}^{j-1}(1+rh)C_2\right)
  \exp\left(\mu_1\sum_{r=0}^{j}(1+r)\ell_r\right).
\]
If \(\mu_1\ge0\), then
\[
  \|\widetilde q_j\|_1
  \le
  \|v\|_1
  \left(\prod_{r=0}^{j-1}(1+rh)C_2\right)
  \exp\left(\mu_1\sum_{r=0}^{j}(1+rh)\ell_r\right).
\]
Since
\[
  \prod_{r=0}^{j-1}(1+rh)C_2
  =
  P_jC_2^j,
\]
this accounts for all \(A_2\)-factors.
We now simplify the exponential factors. From the definitions of the
\(\ell_r\)'s,
\[
  \sum_{r=0}^{j}(1+r)\ell_r
  =
  s+s_1+\cdots+s_j,
\]
and
\[
  \sum_{r=0}^{j}(1+rh)\ell_r
  =
  s+h(s_1+\cdots+s_j).
\]
Therefore, by the definition of \(\sigma\), both sign cases are written
uniformly as
\[
  \|\widetilde q_j\|_1
  \le
  \|v\|_1
  P_jC_2^j
  e^{\mu_1s}e^{\sigma\mu_1(s_1+\cdots+s_j)} .
\]
Finally, using the terminal \(\ell^1\)-\(\ell^\infty\) estimate,
\[
  |\widetilde q_j w|
  \le
  \|\widetilde q_j\|_1\|w\|_\infty,
\]
we obtain the pointwise bound on the fixed \(j\)-th Dyson integrand:
\[
  \left|
  v^Te^{A_1(s-s_j)}
  A_2e^{A_1(s_j-s_{j-1})}
  \cdots
  A_2e^{A_1s_1}w
  \right|
  \le
  \|v\|_1\|w\|_\infty
  P_jC_2^j
  e^{\mu_1s}e^{\sigma\mu_1(s_1+\cdots+s_j)} .
\]
Integrating this estimate over \(\Delta_j(s)\) gives
\[
  \int_{\Delta_j(s)}
  e^{\sigma\mu_1(s_1+\cdots+s_j)}
  \,ds_1\cdots ds_j .
\]
The integrand is symmetric in \(s_1,\ldots,s_j\), and the ordered simplices
partition \([0,s]^j\) up to measure-zero boundaries. Hence
\[
  \int_{\Delta_j(s)}
  e^{\sigma\mu_1(s_1+\cdots+s_j)}
  \,ds_1\cdots ds_j
  =
  \frac1{j!}
  \int_{[0,s]^j}
  e^{\sigma\mu_1(u_1+\cdots+u_j)}
  \,du_1\cdots du_j .
\]
The cube integral factors:
\[
  \int_{[0,s]^j}
  e^{\sigma\mu_1(u_1+\cdots+u_j)}
  \,du_1\cdots du_j
  =
  \left(\int_0^s e^{\sigma\mu_1u}\,du\right)^j
  =
  \Theta(s)^j .
\]
Therefore
\[
  \int_{\Delta_j(s)}
  e^{\sigma\mu_1(s_1+\cdots+s_j)}
  \,ds_1\cdots ds_j
  =
  \frac{\Theta(s)^j}{j!}.
\]
Summing over \(j=k_D,\ldots,D-1\), we obtain
\[
  |v^Te^{As}w|
  \le
  \|v\|_1\|w\|_\infty e^{\mu_1s}
  \sum_{j=k_D}^{D-1}
  P_jC_2^j\frac{\Theta(s)^j}{j!}.
\]
This proves the claim.
\end{proof}

\ifmai
\begin{proof}
By Lemma~\ref{lem:dyson},
\[
  v^Te^{As}w
  =\sum_{j=0}^{\infty}
    \int_{\Delta_j(s)}
    v^Te^{A_1(s-s_j)}A_2e^{A_1(s_j-s_{j-1})}
    \cdots A_2e^{A_1s_1}w
    \,ds_1\cdots ds_j.
\]

First consider the degree support. The row vector \(v^T\) is supported on
monomials of degree \(1\). Each factor \(e^{A_1\theta}\) does not mix total
homogeneous degrees. Each occurrence of \(A_2\) can increase degree by at most
\(h\). Hence after \(j\) occurrences of \(A_2\), the row support has maximum
possible degree at most \(1+jh\). If \(j<k_D\), then
\[
  1+jh<D-h+1,
\]
so the row support is disjoint from the support of \(w\). Therefore all terms
with \(j<k_D\) vanish.

Moreover, each occurrence of \(A_2\), when nonzero, increases degree by at
least \(1\). Hence after \(j\) occurrences of \(A_2\), the row support has
minimum possible degree at least \(1+j\). If \(j\ge D\), then this minimum
exceeds \(D\), and the corresponding row vector in the truncated degree-\(D\)
space is zero. Thus only indices \(k_D\le j\le D-1\) contribute.

Fix such a \(j\), and fix \((s_1,\ldots,s_j)\in\Delta_j(s)\). Before the
\(r\)-th occurrence of \(A_2\), with \(r=0,\ldots,j-1\), the row support has
maximum degree at most \(1+rh\). By Lemma~\ref{lem:degree-structure}, the
corresponding \(A_2\)-factor has \(\ell^1\)-operator contribution at most
\[
  (1+rh)C_2.
\]
The product of all \(A_2\)-bounds is therefore
\[
  P_jC_2^j.
\]

It remains to bound the \(A_1\)-semigroup factors. Let
\[
  \ell_0:=s-s_j,
  \quad
  \ell_r:=s_{j-r+1}-s_{j-r}\quad(1\le r\le j-1),
  \quad
  \ell_j:=s_1.
\]
Equivalently, the interval of length \(\ell_r\) is the interval after exactly
\(r\) occurrences of \(A_2\) have been applied to the row vector.

If \(\mu_1<0\), then after \(r\) occurrences of \(A_2\) the row support is
supported only on degrees at least \(1+r\). By
Lemma~\ref{lem:degree-local-a1}, the product of \(A_1\)-semigroup bounds is at
most
\[
  \exp\left(\mu_1\sum_{r=0}^j(1+r)\ell_r\right).
\]
A direct telescoping computation gives
\[
  \sum_{r=0}^j(1+r)\ell_r=s+s_1+\cdots+s_j.
\]
Thus, in the case \(\mu_1<0\), the exponential contribution is bounded by
\[
  e^{\mu_1s}e^{\mu_1(s_1+\cdots+s_j)}.
\]

If \(\mu_1\ge0\), then after \(r\) occurrences of \(A_2\) the row support is
supported only on degrees at most \(1+rh\). By
Lemma~\ref{lem:degree-local-a1}, the product of \(A_1\)-semigroup bounds is at
most
\[
  \exp\left(\mu_1\sum_{r=0}^j(1+rh)\ell_r\right).
\]
Again by telescoping,
\[
  \sum_{r=0}^j(1+rh)\ell_r=s+h(s_1+\cdots+s_j).
\]
Hence, in the case \(\mu_1\ge0\), the exponential contribution is bounded by
\[
  e^{\mu_1s}e^{h\mu_1(s_1+\cdots+s_j)}.
\]

By the definition of \(\sigma\), both cases can be written as
\[
  e^{\mu_1s}e^{\sigma\mu_1(s_1+\cdots+s_j)}.
\]
Therefore the absolute value of the fixed \(j\)-th integrand is bounded by
\[
  \|v\|_1\|w\|_\infty P_jC_2^j
  e^{\mu_1s}e^{\sigma\mu_1(s_1+\cdots+s_j)}.
\]
Integrating over \(\Delta_j(s)\), and using symmetry of the product over the
cube \([0,s]^j\), yields
\[
  \int_{\Delta_j(s)} e^{\sigma\mu_1(s_1+\cdots+s_j)}
  \,ds_1\cdots ds_j
  =\frac1{j!}\left(\int_0^s e^{\sigma\mu_1u}\,du\right)^j
  =\frac{\Theta(s)^j}{j!}.
\]
Summing over \(j=k_D,\ldots,D-1\) proves the claim.
\end{proof}
\fi

\subsection{General bound}
\ifmai
\begin{theorem}[Unified refined finite bound]
\label{thm:finite}
For every \(D\ge1\) and every \(t\in[0,T]\),
\[
\boxed{
  \epsilon_g^D(t;x_0)
  \le
  \|v\|_1DC_2R^{D+1}R_+^{h-1}
  \sum_{j=k_D}^{D-1}P_jC_2^j
  \frac{\Theta(t)^{j+1}}{(j+1)!}.
}
\]
Consequently,
\[
\boxed{
  \sup_{t\in[0,T]}\epsilon_g^D(t;x_0)
  \le
  \|v\|_1DC_2R^{D+1}R_+^{h-1}
  \sum_{j=k_D}^{D-1}P_jC_2^j
  \frac{\Theta(T)^{j+1}}{(j+1)!}.
}
\]
\end{theorem}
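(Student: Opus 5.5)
The plan is to combine the three ingredients already established: the variation-of-parameters identity (Lemma~\ref{lem:voc}), the refined Dyson-tail estimate (Lemma~\ref{lem:refined-tail}), and the overflow bound of Lemma~\ref{lem:degree-structure}(iii). The only new work is a single time integration, which I expect to be the main (if modest) obstacle.

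\emph{Step 1: reduce to a pointwise integrand bound.} Fix $t\in[0,T]$. By Lemma~\ref{lem:voc}, and taking absolute values inside the integral,
\[
  \epsilon_D(t;x_0)\le\int_0^t \bigl|v^Te^{A(t-\tau)}w(\tau)\bigr|\,d\tau .
\]
For each $\tau\in[0,t]$, Lemma~\ref{lem:degree-structure}(iii) says that $w(\tau)$ is supported on degrees at least $D-h+1$. Since $v$ is supported on degree $1$, Lemma~\ref{lem:refined-tail} applies with $s:=t-\tau\in[0,T]$. Inserting the overflow bound $\|w(\tau)\|_\infty\le DC_2R^{D+1}R_+^{h-1}$, which is uniform in $\tau$, gives
\[
  \epsilon_D(t;x_0)\le \|v\|_1DC_2R^{D+1}R_+^{h-1}\sum_{j=k_D}^{D-1}P_jC_2^j\int_0^t e^{\mu_1 s}\frac{\Theta(s)^j}{j!}\,ds ,
\]
after the change of variable $s=t-\tau$. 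Interchanging the finite sum with the integral is harmless.

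\emph{Step 2: the time integral.} The claim follows once we show
\[
  \int_0^t e^{\mu_1 s}\frac{\Theta(s)^j}{j!}\,ds\le\frac{\Theta(t)^{j+1}}{(j+1)!}.
\]
The key observation is that $\Theta'(s)=e^{(\mu_1^-+h\mu_1^+)s}$ dominates $e^{\mu_1 s}$, which I would check in two cases.
\begin{enumerate}
\item If $\mu_1<0$, then $\Theta'(s)=e^{\mu_1 s}$, so the two coincide.
\item If $\mu_1\ge0$, then $e^{\mu_1 s}\le e^{h\mu_1 s}=\Theta'(s)$, because $h=p-1\ge1$ and $s\ge0$.
\end{enumerate}
In both cases $\Theta(s)\ge0$, so the integrand is at most $\Theta'(s)\Theta(s)^j/j!$. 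This is the exact derivative of $\Theta(s)^{j+1}/(j+1)!$, and $\Theta(0)=0$, so integrating from $0$ to $t$ yields the desired bound.

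\emph{Step 3: consistency check.} One point deserves care. The exponent $\sigma\mu_1$ appearing inside $\Theta$ in the proof of Lemma~\ref{lem:refined-tail} must match the definition of $\Theta$ in Subsection~\ref{sub:setup}. It does: $\sigma=1$ when $\mu_1<0$ and $\sigma=h$ when $\mu_1\ge0$, which is exactly $\mu_1^-+h\mu_1^+$. Hence the same function $\Theta$ appears in the tail estimate and in the final bound.

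Substituting the bound from Step 2 into the sum from Step 1 gives \eqref{eq:finite-refined-bound}. Finally, $\Theta$ is nondecreasing, so each term is maximized at $t=T$; this yields the uniform-in-$t$ version used in the corollaries.
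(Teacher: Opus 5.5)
Your proposal is correct and follows the paper's proof essentially step for step. Like the paper, you combine variation of parameters, the overflow bound, and the refined Dyson-tail estimate with $s=t-\tau$, then use $e^{\mu_1 s}\le\Theta'(s)$ to integrate $\Theta(s)^j$ exactly, and finish with the monotonicity of $\Theta$ for the uniform-in-$t$ bound. Your explicit checks that $w(\tau)$ satisfies the support hypothesis of the tail lemma, and that $\sigma\mu_1=\mu_1^-+h\mu_1^+$, fill in points the paper leaves implicit.
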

\fi
\begin{proof_of}{Theorem \ref{thm:finite}}
By Lemma~\ref{lem:voc}, with \(s=t-\tau\),
\[
  \epsilon_D(t;x_0)
  \le \int_0^t |v^Te^{A(t-\tau)}w(\tau)|\,d\tau.
\]
By Lemma~\ref{lem:degree-structure},
\[
  \|w(\tau)\|_\infty\le DC_2R^{D+1}R_+^{h-1}.
\]
By Lemma~\ref{lem:refined-tail},
\[
  |v^Te^{As}w(\tau)|
  \le
  \|v\|_1DC_2R^{D+1}R_+^{h-1}
  e^{\mu_1s}
  \sum_{j=k_D}^{D-1}P_jC_2^j\frac{\Theta(s)^j}{j!}.
\]
Therefore
\[
  \epsilon_D(t;x_0)
  \le
  \|v\|_1DC_2R^{D+1}R_+^{h-1}
  \sum_{j=k_D}^{D-1}P_jC_2^j\frac1{j!}
  \int_0^t e^{\mu_1s}\Theta(s)^j\,ds.
\]
If \(\mu_1<0\), then \(\sigma=1\), and
\[
  \Theta'(s)=e^{\mu_1s}.
\]
If \(\mu_1\ge0\), then \(\sigma=h\), and since \(h\ge1\),
\[
  e^{\mu_1s}\le e^{h\mu_1s}=\Theta'(s).
\]
Thus, in all cases,
\[
  \int_0^t e^{\mu_1s}\Theta(s)^j\,ds
  \le
  \int_0^t \Theta'(s)\Theta(s)^j\,ds
  =\frac{\Theta(t)^{j+1}}{j+1}.
\]
Substituting this estimate gives
\[
  \epsilon_g^D(t;x_0)
  \le
  \|v\|_1DC_2R^{D+1}R_+^{h-1}
  \sum_{j=k_D}^{D-1}P_jC_2^j
  \frac{\Theta(t)^{j+1}}{(j+1)!}.
\]
Since \(\Theta\) is increasing on \([0,T]\), the uniform bound follows.
\end{proof_of}

\subsection{Geometric corollaries}
Define
\[
  \rho:=hC_2\Theta(T).
\]

\vspace*{.2cm}
\ifmai
\begin{corollary}[First-tail geometric regime]
\label{cor:first-tail}
Assume
\[
  \rho=hC_2\Theta(T)<1.
\]
Then, for all \(D\ge1\),
\[
\boxed{
  \sup_{t\in[0,T]}\epsilon_g^D(t;x_0)
  \le K_1\,\gamma_1^{\,k_D},
}
\]
where
\[
\boxed{
  \gamma_1:=hC_2R^h\Theta(T),
}
\]
and
\[
\boxed{
  K_1:=\frac{\|v\|_1hC_2\Theta(T)R^2R_+^{2h-2}}{1-\rho}.
}
\]
In particular, if \(\gamma_1<1\), then
\[
  \sup_{t\in[0,T]}\epsilon_g^D(t;x_0)\to0
  \qquad\text{as }D\to+\infty.
\]
\end{corollary}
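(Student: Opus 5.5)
The plan is to start from the finite bound \eqref{eq:finite-refined-bound} of Theorem~\ref{thm:finite} and reduce each summand to a pure geometric term in \(\rho=\rho(T)\). Since \(\Theta\) is nondecreasing on \([0,T]\), I first replace \(\Theta(t)\) by \(\Theta(T)\) for all \(t\in[0,T]\). Next I use the elementary estimate \(P_j=\prod_{r=0}^{j-1}(1+rh)\le\prod_{r=0}^{j-1}(r+1)h=h^j j!\), valid because \(h\ge1\). This gives
\[
  P_jC_2^j\frac{\Theta(T)^{j+1}}{(j+1)!}\le \frac{(hC_2\Theta(T))^j\,\Theta(T)}{j+1}=\frac{\rho^j\,\Theta(T)}{j+1}.
\]
Every index in the sum satisfies \(j\ge k_D\), so \(1/(j+1)\le 1/(k_D+1)\). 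Extending the finite sum to the full tail and using \(\rho<1\),
\[
  \sup_{t\in[0,T]}\epsilon_D(t;x_0)\le \|v\|_1 C_2R^{D+1}R_+^{h-1}\,\Theta(T)\,\frac{D}{k_D+1}\,\frac{\rho^{k_D}}{1-\rho}.
\]

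The key step is absorbing the prefactor \(D\). Since \(k_D+1=\lceil D/h\rceil\ge D/h\), we have \(D/(k_D+1)\le h\), and therefore \(C_2\Theta(T)\,D/(k_D+1)\le hC_2\Theta(T)=\rho\). The bound becomes
\[
  \frac{\|v\|_1\,\rho\,R^{D+1}R_+^{h-1}}{1-\rho}\,\rho^{k_D}.
\]

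It remains to turn \(R^{D+1}\) into \(R^{hk_D}\) times admissible constants. I aim for the inequality \(R^{D+1}\le R^{2}R_+^{h-1}\,R^{hk_D}\), and I will argue by cases on \(R\).
\begin{itemize}
\item If \(R\le1\), the inequality requires \(D+1\ge hk_D+2\), i.e. \(hk_D\le D-1\). This holds because \(hk_D=h\lceil (D-h)/h\rceil<(D-h)+h=D\). It also holds in the degenerate case \(D\le h\), where \(k_D\le 0\).
\item If \(R>1\), the inequality follows from \(D\le hk_D+h\), which is immediate from the definition of the ceiling. It yields \(R^{D+1}\le R^{hk_D+h+1}=R^{hk_D}R^2R^{h-1}\).
\end{itemize}
Combining, \(R^{D+1}R_+^{h-1}\rho^{k_D}\le R^2R_+^{2h-2}(R^h\rho)^{k_D}\). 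This gives exactly \(K_1\gamma_1^{k_D}\) with the constants in the statement. The convergence claim then follows since \(k_D\to\infty\) as \(D\to\infty\).

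The main obstacle I expect is the careful handling of the exponent bookkeeping in these last two steps. The bound \(D/(k_D+1)\le h\) and the two-sided relation \(D-h\le hk_D<D\) must cover small \(D\), where \(k_D\) may be \(0\) or negative. In that range one should check that the sum in \eqref{eq:finite-refined-bound} is interpreted with \(j\ge\max(k_D,0)\), so that \(1/(j+1)\le 1/(k_D+1)\) still makes sense, or alternatively restrict to \(D>h\) and treat small \(D\) by the same direct estimate.
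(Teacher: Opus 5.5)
Your proposal is correct and follows essentially the same route as the paper: $P_j\le h^j j!$, then $D/(j+1)\le D/(k_D+1)\le h$, then the tail bound $\rho^{k_D}/(1-\rho)$, and finally absorbing $R^{D+1}$ via $D-h\le hk_D\le D-1$. Where you split into the cases $R\le1$ and $R>1$, the paper writes the same step in one line as $R^{D-1-hk_D}\le R_+^{h-1}$. Your worry about small $D$ is unnecessary: $k_D+1=\lceil D/h\rceil\ge1$ forces $k_D\ge0$ for every $D\ge1$, so $k_D$ is never negative.
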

\fi
\begin{proof_of}{Corollary \ref{cor:first-tail}}
For every \(j\ge0\),
\[
  P_j=\prod_{r=0}^{j-1}(1+rh)
  \le\prod_{r=0}^{j-1}h(r+1)=h^jj!.
\]
Using Theorem~\ref{thm:finite},
\[
  \sup_{t\in[0,T]}\epsilon_D(t;x_0)
  \le
  \|v\|_1DC_2R^{D+1}R_+^{h-1}
  \Theta(T)
  \sum_{j=k_D}^{D-1}
  \frac{(hC_2\Theta(T))^j}{j+1}.
\]
Since \(k_D+1=\lceil D/h\rceil\), for every \(j\ge k_D\),
\[
  \frac{D}{j+1}\le\frac{D}{k_D+1}\le h.
\]
Therefore
\[
  \sup_{t\in[0,T]}\epsilon_D(t;x_0)
  \le
  \|v\|_1hC_2\Theta(T)R^{D+1}R_+^{h-1}
  \sum_{j=k_D}^{D-1}\rho^j.
\]
If \(\rho:=\rho(T)<1\), then
\[
  \sum_{j=k_D}^{D-1}\rho^j\le\frac{\rho^{k_D}}{1-\rho}.
\]
It remains to absorb the factor \(R^{D+1}\). Since
\[
  D-1-hk_D\in\{0,1,\ldots,h-1\},
\]
we have
\[
  R^{D+1}=R^2R^{hk_D}R^{D-1-hk_D}
  \le R^2R_+^{h-1}(R^h)^{k_D}.
\]
Thus
\[
  R^{D+1}R_+^{h-1}
  \le R^2R_+^{2h-2}(R^h)^{k_D}.
\]
Combining the estimates gives
\[
  \sup_{t\in[0,T]}\epsilon_g^D(t;x_0)
  \le
  \frac{\|v\|_1hC_2\Theta(T)R^2R_+^{2h-2}}{1-\rho}
  \left(R^h\rho\right)^{k_D}.
\]
Since \(R^h\rho=hC_2R^h\Theta(T)=\gamma_1\), the claim follows.
\end{proof_of}

\vspace*{.2cm}
\vspace*{.2cm}
\ifmai
\begin{corollary}[Last-tail geometric regime]
\label{cor:last-tail}
Assume
\[
  \rho=hC_2\Theta(T)>1
  \qquad\text{and}\qquad
  R\rho<1.
\]
Then, for all \(D\ge1\),
\[
\boxed{
  \sup_{t\in[0,T]}\epsilon_g^D(t;x_0)
  \le K_2\,\gamma_2^{\,k_D},
}
\]
where
\[
\boxed{
  \gamma_2:=(R\rho)^h,
}
\]
and
\[
\boxed{
  K_2:=\frac{\|v\|_1hC_2\Theta(T)RR_+^{h-1}}{\rho-1}.
}
\]
Consequently,
\[
  \sup_{t\in[0,T]}\epsilon_g^D(t;x_0)\to0
  \qquad\text{as }D\to+\infty.
\]
\end{corollary}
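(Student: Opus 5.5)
The plan is to follow the proof of Corollary~\ref{cor:first-tail} up to the point where the geometric sum appears, and then bound that sum from its \emph{last} term instead of its first. The argument uses only Theorem~\ref{thm:finite}, the inequality $P_j\le h^jj!$, and the relation $k_D+1=\lceil D/h\rceil$.

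\textbf{Step 1 (reduction to a geometric sum).} Start from the uniform version of Theorem~\ref{thm:finite}, using that $\Theta$ is increasing, and insert $P_j\le h^jj!$. Then use $D/(j+1)\le D/(k_D+1)\le h$ for $j\ge k_D$, exactly as in the proof of Corollary~\ref{cor:first-tail}. This gives
\[
  \sup_{t\in[0,T]}\epsilon_D(t;x_0)\le \|v\|_1\,hC_2\Theta(T)\,R^{D+1}R_+^{h-1}\sum_{j=k_D}^{D-1}\rho^j ,
\]
with $\rho=\rho(T)=hC_2\Theta(T)$. This step is identical to the first-tail case.

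\textbf{Step 2 (last-tail estimate of the sum).} Since now $\rho>1$, I bound the sum by its largest term rather than its smallest:
\[
  \sum_{j=k_D}^{D-1}\rho^j=\frac{\rho^D-\rho^{k_D}}{\rho-1}\le\frac{\rho^D}{\rho-1}.
\]
Combining this with the factor $R^{D+1}$ yields $R^{D+1}\rho^D=R\,(R\rho)^D$. The whole prefactor then becomes
\[
  \frac{\|v\|_1\rho R R_+^{h-1}}{\rho-1}=K_2 .
\]

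\textbf{Step 3 (converting the exponent $D$ into $hk_D$).} This is the only point needing care. Since $k_D=\lceil (D-h)/h\rceil<(D-h)/h+1=D/h$, we have $hk_D\le D$. Because $R\rho<1$ by hypothesis, this gives
\[
  (R\rho)^D\le (R\rho)^{hk_D}=\gamma_2^{\,k_D}.
\]
Note that this monotonicity in the exponent is exactly where the assumption $R\rho<1$ is used, as opposed to the weaker $R^h\rho<1$ of the first-tail regime. I will check the small case $D\le h$ separately. There $k_D=0$, the claim reads $\epsilon_D\le K_2$, and it follows from the same chain, since $(R\rho)^D\le 1$.

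\textbf{Step 4 (convergence).} Since $R\rho<1$, we have $\gamma_2<1$. Moreover $k_D\to\infty$ as $D\to\infty$, so $K_2\gamma_2^{k_D}\to0$.

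I expect no real obstacle. The only delicate points are the inequality $hk_D\le D$ and keeping track of $R_+$ versus $R$, which here does not require splitting $R^{D+1}$ as in the first-tail proof.
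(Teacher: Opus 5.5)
Your proposal is correct and follows the paper's proof essentially step for step. It uses the same reduction to $\|v\|_1 hC_2\Theta(T)R^{D+1}R_+^{h-1}\sum_{j=k_D}^{D-1}\rho^j$, the last-term bound $\sum_{j}\rho^j\le\rho^D/(\rho-1)$, the rewriting $R^{D+1}\rho^D=R(R\rho)^D$, and the exponent comparison $(R\rho)^D\le(R\rho)^{hk_D}$ via $hk_D\le D$ and $R\rho<1$; your separate check of $D\le h$ is already covered by this general chain.
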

\fi
\begin{proof_of}{Corollary \ref{cor:last-tail}}
The proof begins as in Corollary~\ref{cor:first-tail}. From Theorem~\ref{thm:finite},
\(P_j\le h^jj!\), and \(D/(j+1)\le h\) for \(j\ge k_D\), we obtain
\[
  \sup_{t\in[0,T]}\epsilon_D(t;x_0)
  \le
  \|v\|_1hC_2\Theta(T)R^{D+1}R_+^{h-1}
  \sum_{j=k_D}^{D-1}\rho^j.
\]
If \(\rho:=\rho(T)>1\), then
\[
  \sum_{j=k_D}^{D-1}\rho^j\le\frac{\rho^D}{\rho-1}.
\]
Therefore
\[
  \sup_{t\in[0,T]}\epsilon_D(t;x_0)
  \le
  \frac{\|v\|_1hC_2\Theta(T)R_+^{h-1}}{\rho-1}
  R^{D+1}\rho^D.
\]
Since \(R^{D+1}\rho^D=R(R\rho)^D\),
\[
  \sup_{t\in[0,T]}\epsilon_D(t;x_0)
  \le
  \frac{\|v\|_1hC_2\Theta(T)RR_+^{h-1}}{\rho-1}
  (R\rho)^D.
\]
Moreover, \(D\ge hk_D\). Since \(0<R\rho<1\),
\[
  (R\rho)^D\le (R\rho)^{hk_D}.
\]
Thus
\[
  \sup_{t\in[0,T]}\epsilon_D(t;x_0)
  \le K_2\,\gamma_2^{\,k_D},
\]
with \(K_2\) and \(\gamma_2\) as stated. Since \(R\rho<1\), one has
\(\gamma_2<1\), and convergence follows.
\end{proof_of}

\ifmai
\section{Conclusion and Further Work}\label{sec:conclusion}
We have developed refined error bounds for finite Carleman truncations of
polynomial ordinary differential equations. The bounds are obtained directly in
the original monomial basis and for a prescribed observable, such as a state
coordinate. The key idea is to exploit the degree structure of the Carleman
matrix: the linear part preserves total degree, while the nonlinear part raises
degree. A Dyson--Duhamel expansion makes this separation explicit and allows us
to track how truncation errors propagate from the discarded high-degree tail
back to the observable.

The resulting estimates are degree-aware and retain information on the linear
dynamics through logarithmic norms. In particular, when the linear part has
negative logarithmic norm, this contractivity enters the convergence factor
quantitatively. The examples show that this can reduce conservativeness with
respect to more global norm-based bounds, while avoiding a preliminary quadratic
reduction and keeping the analysis tied to the chosen observable.

Several directions remain for future work. First, the bounds should be integrated
into a reachability implementation, such as CKR~\cite{full}, where the
Carleman truncation matrix can be reused across time steps and only the local
trajectory bound and error enclosure need to be updated. Second, the present
truncation estimates should be combined with Krylov projection-error bounds, in
order to obtain a unified certificate for reduced Carleman--Krylov models.
Finally, it would be useful to extend the degree-local analysis to richer
observables, to broader analytic systems, and to bounds that use set-dependent
or time-varying trajectory radii instead of a single uniform radius.
\fi

\section{Conclusion and Further Work}\label{sec:conclusion}
We have developed refined error bounds for finite Carleman truncations of
polynomial ordinary differential equations. The bounds are obtained directly in
the original monomial basis and for a prescribed observable, such as a state
coordinate. The key idea is to exploit the degree structure of the Carleman
matrix: the linear part preserves total degree, while the nonlinear part raises
degree. A Dyson--Duhamel expansion makes this separation explicit and allows us
to track how truncation errors propagate from the discarded high-degree tail
back to the observable. The resulting estimates are degree-aware and retain information on the linear
dynamics through logarithmic norms.

Several directions remain for future work. First, the bounds should be integrated
into a reachability implementation, such as CKR~\cite{full}, where the
Carleman truncation matrix can be reused across time steps and only the local
trajectory bound and error enclosure need to be updated.
Second, the same questions arise in recent quantum algorithms for nonlinear differential
equations based on Carleman linearization
\cite{LiuEtAl2021,Krovi2023,CostaSchleichMoralesBerry2025,WuWangLi2025}.
Exploring whether degree-local truncation bounds can help relax strong
dissipativity or contractivity assumptions in that setting is another direction
for future work.
\bibliographystyle{plain}
\bibliography{cas-refs}
\end{document}
\ifmai
\section{Improved geometric corollaries}
\begin{corollary}[Exact geometric-sum bound]
\label{cor:exact-geometric-sum}
Let
\[
  h:=p-1,
  \qquad
  \rho:=hC_2\Theta(T).
\]
Then, for every \(D\ge1\),
\[
  \sup_{t\in[0,T]}\epsilon_g^D(t;x_0)
  \le
  \|v\|_1 hC_2\Theta(T)R^{D+1}R_+^{h-1}
  S_{k_D,D}(\rho),
\]
where
\[
  S_{k,D}(\rho):=\sum_{j=k}^{D-1}\rho^j.
\]
Equivalently,
\[
  S_{k,D}(\rho)
  =
  \begin{cases}
  \dfrac{\rho^k-\rho^D}{1-\rho}, & \rho\ne1,\\[1.2em]
  D-k, & \rho=1.
  \end{cases}
\]
\end{corollary}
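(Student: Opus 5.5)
The bound follows from the first half of the proof of Corollary~\ref{cor:first-tail}, stopping before any tail estimate of the geometric sum is applied. We start from the finite estimate of Theorem~\ref{thm:finite}, evaluated uniformly in $t\in[0,T]$. Since $\Theta$ is nondecreasing on $[0,T]$, each term $\Theta(t)^{j+1}$ is bounded by $\Theta(T)^{j+1}$. This gives
\[
  \sup_{t\in[0,T]}\epsilon_D(t;x_0)\le \|v\|_1DC_2R^{D+1}R_+^{h-1}\sum_{j=k_D}^{D-1}P_jC_2^j\frac{\Theta(T)^{j+1}}{(j+1)!}.
\]

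Next, we simplify the coefficients in two steps.
\begin{enumerate}
\item Use $P_j\le h^jj!$, which follows from $1+rh\le h(r+1)$ because $h\ge1$. This turns $P_jC_2^j\Theta(T)^{j+1}/(j+1)!$ into at most $\Theta(T)\rho^j/(j+1)$, with $\rho=hC_2\Theta(T)$.
\item Absorb the prefactor $D$. For every $j\ge k_D$ we have $j+1\ge k_D+1=\lceil D/h\rceil\ge D/h$, hence $D/(j+1)\le h$.
\end{enumerate}
Combining these yields exactly
\[
  \|v\|_1 hC_2\Theta(T)R^{D+1}R_+^{h-1}\sum_{j=k_D}^{D-1}\rho^j .
\]

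The remaining step is the closed form of the finite geometric sum $S_{k,D}(\rho)=\sum_{j=k}^{D-1}\rho^j$:
\begin{itemize}
\item for $\rho\ne1$, it equals $(\rho^k-\rho^D)/(1-\rho)$;
\item for $\rho=1$, it equals $D-k$, the number of terms.
\end{itemize}

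One degenerate case needs a check. If $k_D>D-1$, the sum is empty. By Lemma~\ref{lem:refined-tail} the truncation error then receives no contribution, and $S=0$ agrees with both closed forms. However, since $k_D+1=\lceil D/h\rceil\le D$, this case cannot actually occur.

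The only step needing care is the inequality $D/(j+1)\le h$ over the whole range. It holds for every $j\ge k_D$, including $j=k_D$, precisely because of the identity $k_D+1=\lceil D/h\rceil$ recorded in Subsection~\ref{sub:setup}. No other obstacle is expected.
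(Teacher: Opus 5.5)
Your proposal is correct and follows essentially the same route as the paper's proof: start from the uniform-in-$t$ form of Theorem~\ref{thm:finite}, apply $P_j\le h^jj!$ and $D/(j+1)\le D/(k_D+1)\le h$ for $j\ge k_D$, and close with the finite geometric-series identity. Your extra check that the sum is never empty (since $k_D+1=\lceil D/h\rceil\le D$) is a harmless addition that the paper's argument does not need.
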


\begin{proof}
Starting from the finite refined bound,
\[
  \epsilon_g^D(t;x_0)
  \le
  \|v\|_1
  D C_2 R^{D+1}R_+^{h-1}
  \sum_{j=k_D}^{D-1}
  P_jC_2^j\frac{\Theta(T)^{j+1}}{(j+1)!},
\]
we use
\[
  P_j=\prod_{r=0}^{j-1}(1+rh)
  \le
  h^j j!.
\]
Therefore
\[
  P_jC_2^j\frac{\Theta(T)^{j+1}}{(j+1)!}
  \le
  \Theta(T)\frac{(hC_2\Theta(T))^j}{j+1}
  =
  \Theta(T)\frac{\rho^j}{j+1}.
\]
Since \(j\ge k_D\) and \(k_D+1=\lceil D/h\rceil\), we have
\[
  \frac{D}{j+1}
  \le
  \frac{D}{k_D+1}
  \le h.
\]
Hence
\[
  D C_2
  P_jC_2^j\frac{\Theta(T)^{j+1}}{(j+1)!}
  \le
  h C_2\Theta(T)\rho^j.
\]
Substituting into the finite refined bound yields
\[
  \epsilon_g^D(t;x_0)
  \le
  \|v\|_1 hC_2\Theta(T)R^{D+1}R_+^{h-1}
  \sum_{j=k_D}^{D-1}\rho^j.
\]
Taking the supremum over \(t\in[0,T]\) gives the claim. The displayed closed
form for \(S_{k,D}(\rho)\) is the finite geometric-series identity.
\end{proof}
\fi
\end{document}